\documentclass{amsart}

\usepackage{hyperref}
\usepackage[shortcuts]{extdash}
\usepackage{thmtools}
\usepackage{amssymb}
\usepackage{mathtools}
\usepackage{eucal}
\usepackage{tikz-cd}
\usepackage{cleveref}
\usetikzlibrary{positioning,fit,shapes.geometric}
\usepackage[all,cmtip,2cell]{xy}
\UseAllTwocells
\usepackage{amsmath}
\usepackage{amsfonts}
\usepackage{mathrsfs}
\usepackage{oplotsymbl}

\makeatletter
\renewcommand\operator@font{\sf}
\makeatother

\DeclareMathOperator{\Ac}{Ac}
\DeclareMathOperator{\Ch}{Ch}
\DeclareMathOperator{\Mod}{mod}
\DeclareMathOperator{\proj}{proj}
\DeclareMathOperator{\Hom}{Hom}
\DeclareMathOperator{\End}{End}
\DeclareMathOperator{\id}{id}

\numberwithin{equation}{section}

\declaretheorem[sibling=equation]{theorem}
\declaretheorem[sibling=equation]{proposition}
\declaretheorem[sibling=equation]{lemma}
\declaretheorem[sibling=equation]{corollary}
\declaretheorem[style=definition,sibling=equation]{definition}
\declaretheorem[style=remark,sibling=equation]{remark}

\title[A cohomology operation encoding the octahedral axiom]{A cohomology operation encoding the octahedral axiom}

\author[J.~Díaz Cabrera]{Javier Díaz Cabrera}
\author[F.~Muro]{Fernando Muro}
\address{Universidad de Sevilla,
Facultad de Matemáticas,
Departamento de Álgebra,
Calle Tarfia s/n,
41012 Sevilla, Spain}
\email[J.~Díaz Cabrera]{jdiaz7@us.es}
\email[F.~Muro]{fmuro@us.es}
\urladdr[F. Muro]{https://personal.us.es/fmuro/}

\begin{document}

\begin{abstract}
We introduce a new secondary cohomology operation, the \emph{naturality defect product}, associated with a graded natural transformation between the cohomology functors of two differential graded functors, and study its main properties. We then prove that a pretriangulated category satisfies the octahedral axiom if and only if the naturality defect product associated with the natural isomorphism given by a theorem of Heller vanishes on the homotopy category of acyclic complexes over it. This characterization extends immediately to $n$-angulated categories. As an application, we exhibit non-vanishing instances for the recently found examples of pretriangulated categories which are not triangulated. Finally, we show that horizontal composition of these natural isomorphisms preserves $n$-angulated structures, and we use it to obtain, from each of these examples and for every $r\geq 1$, a pre-$3r$-angulated category which is not $3r$-angulated.
\end{abstract}

\maketitle

\tableofcontents

\section{Introduction}

Triangulated categories were introduced in the early 1960s by D. Puppe \cite{puppe_1962_homotopietheorie} in the context of algebraic topology and by J.-L. Verdier \cite{verdier_1996_des_categories_derivees} in algebraic geometry. Since then, they have become a fundamental tool not only in those fields, but also in other areas such as representation theory, and have been studied extensively in the literature.
Verdier's axiomatization, however, included an additional axiom absent from Puppe's, the so-called \emph{octahedral axiom}.

As usual, we call \emph{pretriangulated categories} those satisfying Puppe's axioms, and reserve the term \emph{triangulated categories} for those that also satisfy the octahedral axiom. During the preparation of this paper, the first examples of pretriangulated categories that are not triangulated have been found; see \cite{Che26v2}, \cite{Name_Name_2026} or \cite{Ping_He-Bo_Le_2026}, and \cite{He_He_Zhou_2026} for a higher analogue.

Let $\mathscr{T}$ be an additive category with split idempotents and $\Sigma$ an autoequivalence of $\mathscr{T}$ which is the suspension functor of some pretriangulated structure. Consider the stable category of finitely presented right $\mathscr{T}$-modules $\underline{\Mod}(\mathscr{T})$. This category is triangulated with inverse suspension functor $\Omega$, the syzygy functor. In \cite[Thm.~16.4]{heller1968stable}, A. Heller proved that endowing $(\mathscr{T},\Sigma)$ with a pretriangulation is equivalent to giving a triangulated natural isomorphism between the triangulated endofunctors of $\underline{\Mod}(\mathscr{T})$ given by $\Omega^{-3}$ and the extension of $\Sigma$. 

 One could ask how the octahedral axiom translates in this setting. Sufficient conditions have been obtained in \cite{amiot_2007_structure_triangulated_categories} and \cite{muro_2020_first_obstructions_enhancing}. However, these conditions are not necessary. An example is the exotic structure described in \cite{Diaz_Muro_2026}, which does not satisfy either of them, but does satisfy the octahedral axiom as proved in \cite{Ping_He-Bo_Le_2026}.  We give a necessary and sufficient condition in terms of a secondary cohomology operation.

 The first example of a secondary cohomology operation is the triple Massey product \cite{massey_1958_some_higher_order_cohomology_operations}, which plays a fundamental role in algebraic topology; see \cite{massey_1969_higher_order_linking_numbers} for an application to link theory. Since its introduction, many other secondary and higher cohomology operations have been discovered and their use has been extended to other areas as well. 
 
 In this paper, we introduce a new secondary cohomology operation, named the \emph{naturality defect product}, associated with a graded natural transformation $\delta\colon H^*(F) \Rightarrow H^*(G)$ between the cohomology functors of two differential graded (dg) functors $F,G\colon \mathscr{A}\to \mathscr{B}$ between dg categories. This operation, which encodes the failure of naturality of $\delta$ before taking cohomology classes, takes as input two classes $f\in H^{*}(\mathscr{A})(A,B)$ and $g\in H^{*}(\mathscr{A})(B,C)$ of morphisms which compose to zero and gives as output a coset
 $$ \langle g,f\rangle_{\delta} \in \frac{H^{*}(\mathscr{B})(F(A),G(C))}{G(g)\circ H^{*}(\mathscr{B})(F(A),G(B)) + H^{*}(\mathscr{B})(F(B),G(C)) \circ F(f) }.$$
 
 To fit this operation into the triangulated setting, we pass from $\underline{\Mod}(\mathscr{T})$ to the equivalent homotopy category of acyclic complexes over $\mathscr{T}$, denoted $\underline{\Ac}(\mathscr{T})$, which has a canonical dg enhancement. This way, a pretriangulation on $\mathscr{T}$ has an associated natural isomorphism $\delta\colon \Sigma \Rightarrow [3]$ between the endofunctors of $\underline{\Ac}(\mathscr{T})$ induced by two dg functors, the suspension $\Sigma$ defined componentwise and the third power of the shift of complexes.
 
 Our main result is the following. For brevity, we only write one of several equivalent conditions.

 \begin{theorem}[{see Theorem \ref{main_theorem}}]
    Let $\mathscr{T}$ be a pretriangulated category with split idempotents and associated natural isomorphism $\delta$. Then, the following are equivalent:
    \begin{enumerate}
        \item $\mathscr{T}$ is triangulated.
        \item The naturality defect product $\langle g,f\rangle_{\delta}$ vanishes for every pair of morphisms $f,g$ in $\underline{\Ac}(\mathscr{T})$ such that $gf=0$.
    \end{enumerate}
 \end{theorem}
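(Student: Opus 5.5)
We plan to prove this by translating both conditions into statements about the acyclic complexes over $\mathscr{T}$ and the chain-level data defining $\delta$.

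\textbf{Setup.} Objects of $\underline{\Ac}(\mathscr{T})$ are acyclic complexes $X$ over $\mathscr{T}$, that is, complexes in which every three consecutive terms extend to a distinguished triangle. We recall how $\delta_X\colon \Sigma X\to X[3]$ is built. For each $n$, choose an exact triangle
$$X^{n}\to X^{n+1}\to Z\to \Sigma X^{n}$$
extending the differential, and compare the two resulting factorizations through the images. This yields a chain map $\Sigma X\to X[3]$ whose homotopy class is independent of the choices by Puppe's axioms. In this picture, naturality of $\delta$ up to homotopy is Heller's theorem.

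\textbf{Identifying the defect.} Given chain maps $f\colon X\to Y$ and $g\colon Y\to W$ with $gf$ null-homotopic, we choose:
\begin{itemize}
\item cocycle representatives $\tilde\delta$;
\item homotopies $h_f$ and $h_g$ witnessing $\tilde\delta_Y\Sigma f\simeq f[3]\tilde\delta_X$ and $\tilde\delta_W\Sigma g\simeq g[3]\tilde\delta_Y$;
\item a null-homotopy $k$ of $gf$.
\end{itemize}
Then $\langle g,f\rangle_\delta$ is the class of the cocycle
$$g[3]h_f+h_g\Sigma f\pm \tilde\delta_W\Sigma k\pm k[3]\tilde\delta_X,$$
taken modulo the indeterminacy.

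The key step is to reduce to a universal case. Up to homotopy equivalence and shifts, the relevant data of a composable pair with zero composite in $\underline{\Ac}(\mathscr{T})$ is concentrated in a window of finitely many degrees. We reduce to the case where $f$ and $g$ are induced by a composable pair $u\colon A\to B$, $v\colon B\to C$ in $\mathscr{T}$, embedded in acyclic complexes built from triangles on $u$, $v$ and $vu$. Concretely, we take a ``$3\times 3$ staircase'' complex whose terms are
$$A,\ B,\ C,\ \mathrm{cone}(u),\ \mathrm{cone}(v),\ \mathrm{cone}(vu).$$
The chain maps between these complexes with zero composite are exactly the data of the putative octahedron.

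\textbf{(1)$\Rightarrow$(2).} If $\mathscr{T}$ is triangulated, the octahedral axiom provides compatible choices of the triangles used to construct $\tilde\delta$ at $X$, $Y$ and $W$. Equivalently, we can lift the whole diagram to a filtered-object or Verdier-structure level. With these choices, $h_f$, $h_g$ and $k$ can be chosen so that the defect cocycle is a coboundary plus an element of the indeterminacy. Hence the product vanishes.

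\textbf{(2)$\Rightarrow$(1).} Given $u$ and $v$, we form the special complexes above. We then show that a representative of $\langle g,f\rangle_\delta$ equal to the coset of $0$ produces a morphism $\mathrm{cone}(u)\to\mathrm{cone}(vu)\to\mathrm{cone}(v)$ making the octahedral triangle distinguished. The defect measures exactly the obstruction to the third triangle being exact, by comparing against the triangle constructed from $\delta$. Vanishing in the quotient lets us modify the choices by elements of the indeterminacy, which correspond precisely to the freedom in choosing the octahedral maps.

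\textbf{Main obstacle.} We expect the main obstacle to be this last identification: showing that the defect class equals, up to sign and indeterminacy, the obstruction to completing an octahedron, that is, the ``Toda bracket'' discrepancy between the two triangle structures on the comparison cone. We would first try to compute both explicitly on the universal staircase complex. We would then use naturality of the naturality defect product under dg functors and the rigidity of $\delta$ on such complexes (Heller's uniqueness) to reduce the general case to this computation.
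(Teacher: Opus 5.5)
You have a genuine gap. The direction (2)$\Rightarrow$(1) is the heart of the theorem, and it is never actually argued: you call it the ``main obstacle'' and defer it. What is missing is a mechanism that turns a vanishing product into an exact cone.

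The paper gets this from Neeman's cone form (TR4') rather than from Verdier's octahedron on $u,v,vu$. Complete the given square by (TR3) to a $\Sigma$-periodic morphism $\kappa$ between exact triangles. Take $g=i$, the cone inclusion, and $s=\begin{psmallmatrix}\id\\0\end{psmallmatrix}$. Vanishing together with Proposition \ref{all_elements_come_from_choices} yields $x$, $y$ and $b$ with $m_{\tilde\delta}(\kappa,\iota,s,x,y)=\partial(b)$. The components of $y$ and $b$ assemble into a homotopy showing that $\delta_{C^{\bullet}(\kappa)}$ has the lower-triangular representative $\begin{psmallmatrix}\tilde\delta_{A^{\bullet}[1]}&0\\ x&\tilde\delta_{B^{\bullet}}\end{psmallmatrix}$ (Proposition \ref{operation_to_delta}). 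Since $\kappa$ is periodic between exact triangles, $D_{\tilde\delta}(\kappa)=0$, so $x$ is a cocycle. Now replace $\kappa$ by $\psi=\kappa-\partial(\eta)$, where $\eta^{n+4}=\Sigma\eta^{n+1}-(-1)^nx^{n+1}$ and $\eta^{-1}=\eta^0=\eta^1=0$. This removes the off-diagonal term under the isomorphism $C^{\bullet}(\kappa)\cong C^{\bullet}(\psi)$, keeps $\psi$ periodic, and alters only the third component (Proposition \ref{change_of_cone}). So the indeterminacy is not simply ``the freedom in choosing the octahedral maps''. It is absorbed by re-choosing $x,y$, and the new third map comes from a change of representative of $k$ dictated by $x$.

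The direction (1)$\Rightarrow$(2) has a separate gap. Condition (2) concerns \emph{all} pairs with $gf=0$. Your reduction to a ``universal staircase'' built from a composable pair in $\mathscr{T}$ is unsupported: nothing justifies that the data is ``concentrated in a window of finitely many degrees'', and a general pair in $\underline{\Ac}(\mathscr{T})$ is not of that form. The missing tool is compatibility with composition (Proposition \ref{product_props}(3),(4)). Since $gk=0$, $g=h\circ i$ factors through the cone inclusion in the triangulated category $\underline{\Ac}(\mathscr{T})$, so $h[3]\circ\langle i,k\rangle_\delta\subseteq\langle g,k\rangle_\delta$. The source and target can then be replaced by homotopy-equivalent exact triangles (TR1), and $k$ by a $\Sigma$-periodic representative (TR3). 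Only at that point does the octahedral axiom enter, in Neeman's form for an arbitrary morphism of triangles. It produces a periodic representative with exact cone, for which one may take $x=y=0$ and $D_{\tilde\delta}(s)=0$.

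By contrast, ``the octahedral axiom provides compatible choices of triangles'' is not an argument. A lift ``to a filtered-object level'' is also not available for a general triangulated category. Finally, acyclic means exact after the Yoneda embedding; consecutive terms of an acyclic complex need not extend to distinguished triangles.
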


 This paper is organized as follows. In Section \ref{sec:operation}, we define the naturality defect product and study its properties. In Section \ref{sec:heller}, we review Heller's theory and take it to the category $\underline{\Ac}(\mathscr{T})$. We prove our main result in Section \ref{sec:octahedral}.
 In Section \ref{sec:examples_section}, we apply our theory to the examples of \cite{Che26v2} and \cite{Name_Name_2026} by exhibiting non-vanishing naturality defect products.
 Finally, in Section \ref{sec:n-angulated}, we see that the results in Section \ref{sec:octahedral} generalize to the theory of $n$-angulated categories. We also show that horizontal composition of the associated natural isomorphisms preserves $n$-angulated structures, and use it to produce, for every $r\geq 1$, pre-$3r$-angulated categories which are not $3r$-angulated.

\addtocontents{toc}{\protect\setcounter{tocdepth}{1}}
\subsection*{Acknowledgements}

Both authors were partially supported by the project PID2024-157173NB-I00 funded by MCIN\slash AEI\slash 10.13039\slash 501100011033 and by FEDER, UE. The first author acknowledges the Spanish Ministry of Science, Innovation and Universities for financial support through the FPU predoctoral grant FPU23/01226.

The first author would like to thank Gustavo Jasso for suggesting the generalization to the $n$-angulated setting during the author's visit to the University of Cologne. 

\subsection*{Declaration of AI use}

This paper was prepared with some degree of AI assistance. In particular, Claude Opus 5.5 helped with some of the computations in Section \ref{sec:pi_d4} and contributed to the statements and proofs of part $(4)$ of Proposition \ref{product_props}, the last part of Proposition \ref{horizontal_props}, and Propositions \ref{lightning_flash} and \ref{gemstone_obstruction}. It also helped in the writing and revision of the manuscript (Sonnet 5.5 was also used for this last part), including the correction of typos and of some signs. All mathematical ideas and the rest of the proofs are due to the authors, who have verified every statement and proof and take full responsibility for them.

\addtocontents{toc}{\protect\setcounter{tocdepth}{3}}
\section{The operation} \label{sec:operation}

\subsection{Definition}

Our purpose in this section is to define a new secondary cohomology operation. Let $\mathscr{A},\mathscr{B}$ be dg categories, $F,G\colon \mathscr{A}\to \mathscr{B}$ two dg functors and $\delta\colon H^*(F) \Rightarrow H^*(G)$ a graded natural transformation of degree $t$. Fix a representative $\tilde{\delta}_X$ for every $\delta_X\in H^t(\mathscr{B})(F(X),G(X))$. For every homogeneous morphism $\alpha\in \mathscr{A}(A,B)$, define
$$D_{\tilde{\delta}}(\alpha):= \tilde{\delta}_B F(\alpha) - (-1)^{t|\alpha|} G(\alpha) \tilde{\delta}_A\colon F(A)\to G(B).$$
Notice that $D_{\tilde{\delta}}(\alpha)$ encodes the failure of graded commutativity between $\tilde{\delta}$ and $\alpha$. In particular, if $\alpha$ is a cocycle, then $D_{\tilde{\delta}}(\alpha)$ is a coboundary by naturality of $\delta$.

\begin{lemma}
    $D_{\tilde{\delta}}$ is a degree $t$ derivation compatible with the differentials, that is, for every pair of composable homogeneous morphisms $\alpha,\beta$ in $\mathscr{A}$, we have
    $$D_{\tilde{\delta}}(\beta\alpha) = D_{\tilde{\delta}}(\beta)F(\alpha) + (-1)^{t|\beta|}G(\beta) D_{\tilde{\delta}}(\alpha) \hspace{4mm} \text{and} \hspace{4mm} D_{\tilde{\delta}}(d(\alpha))=(-1)^td(D_{\tilde{\delta}}(\alpha)). $$
\end{lemma}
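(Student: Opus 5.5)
Both identities follow by direct expansion from the definition of $D_{\tilde{\delta}}$. Two facts about dg functors are used throughout. First, $F$ and $G$ preserve composition and degrees, so $F(\beta\alpha)=F(\beta)F(\alpha)$ and $|F(\alpha)|=|\alpha|$. Second, they commute with the differentials: $d(F(\alpha))=F(d\alpha)$, and similarly for $G$.

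For the derivation formula, write $|\alpha|=a$ and $|\beta|=b$, with $\alpha\colon A\to B$ and $\beta\colon B\to C$. By definition,
$$D_{\tilde{\delta}}(\beta\alpha)=\tilde{\delta}_C F(\beta)F(\alpha)-(-1)^{t(a+b)}G(\beta)G(\alpha)\tilde{\delta}_A .$$
On the right-hand side we have
$$D_{\tilde{\delta}}(\beta)F(\alpha)=\tilde{\delta}_CF(\beta)F(\alpha)-(-1)^{tb}G(\beta)\tilde{\delta}_BF(\alpha)$$
and
$$(-1)^{tb}G(\beta)D_{\tilde{\delta}}(\alpha)=(-1)^{tb}G(\beta)\tilde{\delta}_BF(\alpha)-(-1)^{tb+ta}G(\beta)G(\alpha)\tilde{\delta}_A .$$
Adding these, the mixed terms $\pm(-1)^{tb}G(\beta)\tilde{\delta}_BF(\alpha)$ cancel. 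What remains is exactly $D_{\tilde{\delta}}(\beta\alpha)$.

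For compatibility with the differentials, the key input is that each $\tilde{\delta}_X$ is a cocycle, since it represents a cohomology class $\delta_X\in H^t$. The graded Leibniz rule in $\mathscr{B}$ therefore gives
$$d(\tilde{\delta}_BF(\alpha))=(-1)^t\tilde{\delta}_BF(d\alpha)$$
and
$$d(G(\alpha)\tilde{\delta}_A)=G(d\alpha)\tilde{\delta}_A .$$
Hence
$$(-1)^t d(D_{\tilde{\delta}}(\alpha))=\tilde{\delta}_BF(d\alpha)-(-1)^{t+ta}G(d\alpha)\tilde{\delta}_A .$$
Since $|d\alpha|=a+1$, the sign in $D_{\tilde{\delta}}(d\alpha)$ is $(-1)^{t(a+1)}=(-1)^{ta+t}$. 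So this expression equals $D_{\tilde{\delta}}(d\alpha)$, as claimed.

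The only real work is sign bookkeeping. The place to be careful is the $(-1)^t$ coming from moving the differential past $\tilde{\delta}_B$ in the Leibniz rule. The argument uses only that $\tilde{\delta}_X$ is a cocycle; the naturality of $\delta$ plays no role.
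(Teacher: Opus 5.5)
Your proposal is correct and follows essentially the same route as the paper: a direct expansion where the mixed term $(-1)^{t|\beta|}G(\beta)\tilde{\delta}_BF(\alpha)$ cancels for the derivation rule, and the graded Leibniz rule together with the fact that each $\tilde{\delta}_X$ is a cocycle for the compatibility with $d$. Your sign bookkeeping matches the paper's computation.
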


\begin{proof}
    Let $\alpha\colon A\to B$ and $\beta\colon B\to C$ be composable morphisms of degree $p$ and $q$ respectively. Then, we have
\begin{align*}
    D_{\tilde{\delta}}(\beta\alpha) & = \tilde{\delta}_C F(\beta\alpha) - (-1)^{t(p+q)} G(\beta\alpha)\tilde{\delta}_A \\
     & = \tilde{\delta}_C F(\beta)F(\alpha) - (-1)^{t(p+q)} G(\beta)G(\alpha)\tilde{\delta}_A \\
     & = \bigl( \tilde{\delta}_C F(\beta) - (-1)^{tq} G(\beta)\tilde{\delta}_B \bigr) F(\alpha) \\
     &\qquad + (-1)^{tq} G(\beta)\bigl( \tilde{\delta}_B F(\alpha) - (-1)^{tp} G(\alpha)\tilde{\delta}_A\bigr) \\
     & = D_{\tilde{\delta}}(\beta)F(\alpha) + (-1)^{tq} G(\beta)D_{\tilde{\delta}}(\alpha),
\end{align*}
and, for the second part,
\begin{align*}
    D_{\tilde{\delta}}(d(\alpha)) & = \tilde{\delta}_B F(d(\alpha)) - (-1)^{t(p+1)} G(d(\alpha))\tilde{\delta}_A \\
     & = \tilde{\delta}_B d(F(\alpha)) - (-1)^{t(p+1)} d(G(\alpha))\tilde{\delta}_A \\
     & = (-1)^t d(\tilde{\delta}_B F(\alpha)) - (-1)^{t(p+1)} d(G(\alpha)\tilde{\delta}_A) \\
     & = (-1)^t d(\tilde{\delta}_B F(\alpha) - (-1)^{tp} G(\alpha)\tilde{\delta}_A) \\
     & = (-1)^t d(D_{\tilde{\delta}}(\alpha)).
\end{align*}
\end{proof}

\begin{definition} \label{delta-product}
    Let $F,G\colon \mathscr{A}\to \mathscr{B}$ be dg functors and $\delta$ a graded natural transformation from $H^*(F)$ to $H^*(G)$ of degree $t$, with a fixed representative $\tilde{\delta}_X$ of $\delta_X$ for every object $X$ in $\mathscr{A}$. Consider classes
    $$a=[\alpha]\in H^p(\mathscr{A})(A,B), \hspace{5mm} b=[\beta]\in H^q(\mathscr{A})(B,C)$$
    with vanishing composition $ba=0$. Choose $s$ with $\beta\alpha = d(s)$ and, by naturality of $\delta$, elements $x,y$ with
    $$ D_{\tilde{\delta}}(\alpha) = d(x), \hspace{5mm} D_{\tilde{\delta}}(\beta)=d(y). $$
    We define the cocycle
    $$ m_{\tilde{\delta}}(\alpha,\beta,s,x,y) := D_{\tilde{\delta}}(s) - (-1)^t \big( yF(\alpha) + (-1)^{q+tq}G(\beta) x \big), $$
    and the \emph{indeterminacy} subgroup
    $$ M_{\delta}(b,a) := G(b)\circ H^{p+t-1}(\mathscr{B})(F(A),G(B)) + H^{q+t-1}(\mathscr{B})(F(B),G(C))\circ F(a). $$
    Then, the \emph{naturality defect product} of $a$ and $b$ with respect to $\delta$ is the coset
    $$\langle b,a\rangle_{\delta} := \big[ \; m_{\tilde{\delta}}(\alpha,\beta,s,x,y) \; \big] \in \frac{H^{p+q+t-1}(\mathscr{B})(F(A),G(C))}{M_{\delta}(b,a)}.$$
    
\end{definition}

We say that $\langle b,a\rangle_{\delta}$ \emph{vanishes} if it is zero in the quotient by $M_{\delta}(b,a)$ or, equivalently, if $\langle b,a\rangle_{\delta}$ contains the zero cohomology class.
Our situation is shown in the following diagram:
$$\begin{tikzcd}[column sep=3.6em, row sep=large]
F(A) \arrow[r, "F(\alpha)"] \arrow[d, "\tilde{\delta}_A"'] \arrow[rr, bend left=50, "0", ""{name=T, below}] \arrow[dr, Rightarrow, shorten <=2.5mm, shorten >=2.5mm, "x"]
& F(B) \arrow[r, "F(\beta)"] \arrow[d, "\tilde{\delta}_B"'] \arrow[dr, Rightarrow, shorten <=2.5mm, shorten >=2.5mm, "y"]
& F(C) \arrow[d, "\tilde{\delta}_C"] \\
G(A) \arrow[r, "G(\alpha)"'] \arrow[rr, bend right=50, "0"', ""{name=B, above}]
& G(B) \arrow[r, "G(\beta)"']
& G(C)
\arrow[Rightarrow, from=1-2, to=T, shorten <=2mm, shorten >=0.5mm, "F(s)"']
\arrow[Rightarrow, from=2-2, to=B, shorten <=2mm, shorten >=0.5mm, "G(s)"]
\end{tikzcd}$$
Here the double arrows are morphisms of degree one less whose differentials are, up to sign, the difference between the two composites around the cell in which they sit. Then $m_{\tilde{\delta}}(\alpha,\beta,s,x,y)$ is the sum, with appropriate signs, of the four morphisms $F(A)\to G(C)$ obtained by composing each double arrow with the arrows of the diagram: $\tilde{\delta}_C F(s)$, $G(s)\tilde{\delta}_A$, $yF(\alpha)$ and $G(\beta)x$.

\begin{proposition}
    $\langle b,a\rangle_{\delta}$ is well defined and does not depend on any choice.
\end{proposition}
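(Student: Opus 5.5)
The proof has two parts: first check that $m_{\tilde{\delta}}(\alpha,\beta,s,x,y)$ is a cocycle, then check that its class modulo $M_{\delta}(b,a)$ is unchanged by every choice. The choices are $s$, $x$, $y$, the representatives $\alpha,\beta$ of $a,b$, and the representatives $\tilde{\delta}_X$. Both parts rest on the derivation and differential formulas of the preceding Lemma.

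\emph{Cocycle.} Apply $d$ to $m_{\tilde{\delta}}$.

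The Lemma gives $d(D_{\tilde{\delta}}(s)) = (-1)^t D_{\tilde{\delta}}(d s) = (-1)^t D_{\tilde{\delta}}(\beta\alpha)$.

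Since $\alpha$ and $\beta$ are cocycles, the Leibniz rule for $D_{\tilde{\delta}}$ expands this as $(-1)^t\bigl(D_{\tilde{\delta}}(\beta)F(\alpha)+(-1)^{tq}G(\beta)D_{\tilde{\delta}}(\alpha)\bigr) = (-1)^t\bigl(d(y)F(\alpha)+(-1)^{tq}G(\beta)d(x)\bigr)$.

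On the other side, $d(yF(\alpha)) = d(y)F(\alpha)$, and $d(G(\beta)x) = (-1)^q G(\beta)d(x)$. The sign $(-1)^{q+tq}$ in the definition is exactly what makes the two sides cancel. I will record the degrees $|x|=p+t-1$, $|y|=q+t-1$ and $|s|=p+q-1$ to confirm the total degree $p+q+t-1$.

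\emph{Independence of $s$, $x$, $y$.}
\begin{itemize}
\item Changing $x$ to $x+z$ with $z$ a cocycle changes $m$ by $\pm G(\beta)z$, which lies in $G(b)\circ H^{p+t-1}$.
\item Changing $y$ to $y+w$ changes $m$ by $\pm wF(\alpha)$, which lies in the second summand of $M_\delta(b,a)$.
\item Changing $s$ to $s+c$ with $c$ a cocycle changes $m$ by $D_{\tilde{\delta}}(c)$. This is a coboundary by naturality of $\delta$, since $c$ is a cocycle.
\end{itemize}

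\emph{Independence of $\alpha$, $\beta$.} Replace $\alpha$ by $\alpha'=\alpha+d(u)$. Then
\begin{itemize}
\item $s'=s+\beta u$ is an allowed choice (up to sign, since $d(\beta u)=(-1)^q\beta\, d(u)$);
\item $x'=x+(-1)^t D_{\tilde{\delta}}(u)$ is an allowed choice, using $D_{\tilde{\delta}}(du)=(-1)^t d D_{\tilde{\delta}}(u)$.
\end{itemize}
Expanding $D_{\tilde{\delta}}(\beta u)$ by the Leibniz rule, its $G(\beta)D_{\tilde{\delta}}(u)$ part should cancel against the change in the $G(\beta)x$ term. Its $D_{\tilde{\delta}}(\beta)F(u)=d(y)F(u)$ part should combine with $yF(du)$ into $\pm d(yF(u))$, a coboundary. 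The substitution $\beta\mapsto\beta+d(v)$ is handled symmetrically, with $s'=s+v\alpha$ and $y'=y\pm D_{\tilde{\delta}}(v)$.

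\emph{Independence of $\tilde{\delta}$.} Replace $\tilde{\delta}_X$ by $\tilde{\delta}_X+d(e_X)$ for all $X$, with $|e_X|=t-1$. Then $D$ changes by $E(\alpha)=d(e_B)F(\alpha)-(-1)^{t|\alpha|}G(\alpha)d(e_A)$. I will choose $e_X$ simultaneously for all objects and show that $E$ equals $\pm d\circ D_e \pm D_e\circ d$, where $D_e(\alpha)=e_BF(\alpha)-(-1)^{(t-1)|\alpha|}G(\alpha)e_A$ is the analogous degree-$(t-1)$ expression built from $e$. Given this, for cocycles $\alpha,\beta$ one may take $x'=x\pm D_e(\alpha)$ and $y'=y\pm D_e(\beta)$. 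The change in $D(s)$ is $E(s)=\pm d D_e(s)\pm D_e(\beta\alpha)$, and $D_e(\beta\alpha)$ expands by the Leibniz rule for $D_e$ (same computation as in the Lemma) into terms cancelling the changes in $yF(\alpha)$ and $G(\beta)x$.

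I expect this last step, together with the $\alpha,\beta$ representative change, to be the main obstacle. It is entirely a matter of sign bookkeeping. The structural reason it works is that $D_{\tilde{\delta}+de}$ differs from $D_{\tilde{\delta}}$ by a homotopy of derivations. I would do this case first with $t$ even to fix the signs, then insert the general signs.
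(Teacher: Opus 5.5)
Your proposal is correct and follows the paper's proof. It uses the same cocycle computation and the same compensating choices $s'=s+(-1)^q\beta u$ and $x'=x+(-1)^tD_{\tilde{\delta}}(u)$, which give the coboundary $(-1)^q d(yF(u))$. For $\tilde{\delta}'=\tilde{\delta}+d(h)$ the paper takes exactly your $x'=x+D_h(\alpha)$ and $y'=y+D_h(\beta)$ and gets the coboundary $d(D_h(s))$; your identity $E=d\circ D_e+(-1)^tD_e\circ d$, which holds with these signs, is a compact way of writing the paper's direct expansion.
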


\begin{proof}
        Let $\alpha,\beta,\tilde{\delta},s,x$ and $y$ be as in Definition \ref{delta-product}. First, we check that $m_{\tilde{\delta}}(\alpha,\beta,s,x,y) $ is a cocycle. Indeed,
        \begin{align*}
            d(m_{\tilde{\delta}}(\alpha,\beta,s,x,y)) & =  d(D_{\tilde{\delta}}(s)) - (-1)^t(d(yF(\alpha)) + (-1)^{q+tq}d(G(\beta)x)) \\
             & =  (-1)^t D_{\tilde{\delta}}(d(s)) - (-1)^t(d(y)F(\alpha)+(-1)^{tq}G(\beta)d(x)) \\
             & =  (-1)^t \big( D_{\tilde{\delta}}(\beta\alpha) - ( D_{\tilde{\delta}}(\beta)F(\alpha) + (-1)^{tq} G(\beta)D_{\tilde{\delta}}(\alpha))\big) \\
            & =  0.
        \end{align*}

        We now prove independence of every possible choice. Let us start with the choice of $\alpha$. Let $ [\alpha']=a= [\alpha]$, in particular $\alpha'=\alpha+d(\varepsilon)$.
        We choose $x'=x+ (-1)^{t} D_{\tilde{\delta}}(\varepsilon)$ and $s'=s+ (-1)^q\beta \varepsilon$. These satisfy
        $$d(x') = d(x) +(-1)^t d(D_{\tilde{\delta}}(\varepsilon)) = D_{\tilde{\delta}}(\alpha)+ D_{\tilde{\delta}}(d(\varepsilon)) = D_{\tilde{\delta}}(\alpha') $$
        $$d(s') = d(s)+ (-1)^q d(\beta \varepsilon)= \beta\alpha + \beta d(\varepsilon)= \beta \alpha' $$
        and the cocycle
        \begin{align*}
            & m_{\tilde{\delta}}(\alpha',\beta,s',x',y)- m_{\tilde{\delta}}(\alpha,\beta,s,x,y) \\
            &= D_{\tilde{\delta}}(s'-s) - (-1)^{t} \Big(yF(\alpha'-\alpha) + (-1)^{q+tq}G(\beta)(x'- x) \Big) \\
            &= D_{\tilde{\delta}}((-1)^q \beta \varepsilon) -(-1)^t \Big(yF(d(\varepsilon)) +(-1)^{q+tq+t}G(\beta)D_{\tilde{\delta}}(\varepsilon) \Big)  \\
            &= (-1)^q D_{\tilde{\delta}}( \beta \varepsilon) - (-1)^{q+tq} G(\beta)D_{\tilde{\delta}}(\varepsilon) - (-1)^t yF(d(\varepsilon))  \\
            &= (-1)^q \tilde{\delta}_C F(\beta \varepsilon) - (-1)^{q+t(q+p-1)} G(\beta \varepsilon)\tilde{\delta}_A - (-1)^{q+tq} G(\beta)\tilde{\delta}_B F(\varepsilon)  \\
            &\qquad + (-1)^{q+t(q+p-1)} G(\beta)G(\varepsilon)\tilde{\delta}_A - (-1)^{t} yF(d(\varepsilon)) \\
            &= (-1)^qD_{\tilde{\delta}}(\beta)F(\varepsilon) - (-1)^t yd(F(\varepsilon)) \\
            &= (-1)^qd(y)F(\varepsilon)- (-1)^t yd(F(\varepsilon)) \\
            &= (-1)^q d(yF(\varepsilon))
        \end{align*}

    is a coboundary. The independence in choosing $\beta$ is similar.

    Now, for the choice of $\tilde{\delta}$, let $\tilde{\delta}'$ be another representative of $\delta$ and let $h=(h_X)_{X\in \mathscr{A}} $ be such that $\tilde{\delta}'=\tilde{\delta}+d(h)$ componentwise. Let $\alpha, \beta,x,y$ and $s$ be as above. Then, we can choose
    $$x' = x + h_BF(\alpha) - (-1)^{tp+p}G(\alpha)h_A , \hspace{5mm} y'= y+h_CF(\beta)- (-1)^{tq+q}G(\beta)h_B $$
    so that they satisfy
    $$\begin{array}{ccl}
            d(x') & = &  d(x) +d(h_BF(\alpha)) - (-1)^{tp+p} d(G(\alpha)h_A) \\
            & = &  D_{\tilde{\delta}}(\alpha) + d(h_B)F(\alpha) - (-1)^{tp} G(\alpha)d(h_A) \\
            & = & \tilde{\delta}_B F(\alpha) -(-1)^{tp}G(\alpha)\tilde{\delta}_A  + d(h_B)F(\alpha) - (-1)^{tp} G(\alpha)d(h_A) \\
            & = &  (\tilde{\delta}_B+d(h_B))F(\alpha) - (-1)^{tp} G(\alpha)(d(h_A)+\tilde{\delta}_A) \\
            & = & D_{\tilde{\delta}'}(\alpha),
        \end{array}$$
    similarly, $d(y')=D_{\tilde{\delta}'}(\beta)$ and the cocycle
    \begin{align*}
        &  m_{\tilde{\delta}'}(\alpha,\beta,s,x',y') - m_{\tilde{\delta}}(\alpha,\beta,s,x,y) \\
        &= (D_{\tilde{\delta}'}(s)-D_{\tilde{\delta}}(s)) -(-1)^t\Big( (y'-y)F(\alpha) + (-1)^{q+tq}G(\beta)(x'-x) \Big) \\
        &= (\tilde{\delta}'_C-\tilde{\delta}_C)F(s) - (-1)^{t(p+q-1)} G(s)(\tilde{\delta}'_A-\tilde{\delta}_A)  \\
        &\qquad -\Big((-1)^t h_CF(\beta) -(-1)^{tq+q+t} G(\beta)h_B\Big)F(\alpha)  \\
        &\qquad - G(\beta)\Big((-1)^{t+q+tq}h_BF(\alpha) - (-1)^{t+q+tq+tp+p} G(\alpha)h_A\Big)  \\
        &= d(h_C)F(s) - (-1)^{t(p+q-1)}G(s)d(h_A) \\
        &\qquad -(-1)^t h_C F(\beta\alpha) + (-1)^{t+q+tq+tp+p}G(\beta\alpha)h_A \\
        &= d(h_C)F(s) - (-1)^{t(p+q-1)}G(s)d(h_A)  \\
        &\qquad -(-1)^t h_C d(F(s)) + (-1)^{t+q+tq+tp+p}d(G(s))h_A \\
        &= d\big(h_CF(s) + (-1)^{t+q+tq+tp+p} G(s)h_A\big)
    \end{align*}
    is also a coboundary.

        We observe that $\langle b,a\rangle_{\delta}$ is independent of the choice of $s$ due to the naturality of $\delta$, which implies that $D_{\tilde{\delta}}$ applied to every cocycle is a coboundary. Indeed, if $s'$ satisfies $d(s') = \beta\alpha = d(s)$, then we get
        $$m_{\tilde{\delta}}(\alpha,\beta,s',x,y) - m_{\tilde{\delta}}(\alpha,\beta,s,x,y) = D_{\tilde{\delta}}(s') - D_{\tilde{\delta}}(s) = D_{\tilde{\delta}}(s'-s), $$
        which is a coboundary since $s'-s$ is a cocycle.

        Finally, we show that the indeterminacy $M_{\delta}(b,a)$ takes care of the choices of $x$ and $y$. We write only the choice of $y$ since the other one is analogous. Notice that, as before, any other choice $y'$ would make $y'-y$ a cocycle. Then,
    $$\begin{array}{ccl}
        \big[\, m_{\tilde{\delta}}(\alpha,\beta,s,x,y') - m_{\tilde{\delta}}(\alpha,\beta,s,x,y) \,\big] & = & \big[\, - (-1)^t (y'-y)F(\alpha) \,\big] \\ & = & -(-1)^t [y'-y] \circ F(a),
    \end{array}$$
        which is an element of $H^{q+t-1}(\mathscr{B})(F(B),G(C))\circ F(a)\subseteq M_{\delta}(b,a)$.
\end{proof}

\subsection{Properties}

We give here some basic properties of naturality defect products, starting with the following classical result on secondary cohomology operations:

\begin{proposition} \label{all_elements_come_from_choices}
    Fix $\alpha,\beta,\tilde{\delta}$ and $s$ as in Definition \ref{delta-product}. Then, every element of $\langle b,a\rangle_{\delta}$ is the cohomology class of $m_{\tilde{\delta}}(\alpha,\beta,s,x,y)$ for some choice of $x$ and $y$.
\end{proposition}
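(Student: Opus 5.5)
Fix $\alpha,\beta,\tilde{\delta}$ and $s$, together with one admissible pair $x_0,y_0$ satisfying $d(x_0)=D_{\tilde{\delta}}(\alpha)$ and $d(y_0)=D_{\tilde{\delta}}(\beta)$. By the previous proposition, $\langle b,a\rangle_{\delta}$ is the coset $[m_{\tilde{\delta}}(\alpha,\beta,s,x_0,y_0)]+M_{\delta}(b,a)$. The claim is therefore that every element of $M_{\delta}(b,a)$ arises as a difference
\[
[m_{\tilde{\delta}}(\alpha,\beta,s,x,y)]-[m_{\tilde{\delta}}(\alpha,\beta,s,x_0,y_0)]
\]
for suitable admissible $x,y$.

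Take an arbitrary element of $M_{\delta}(b,a)$, written as $G(b)\circ[u]+[v]\circ F(a)$. Here $u$ is a cocycle of degree $p+t-1$ in $\mathscr{B}(F(A),G(B))$ and $v$ is a cocycle of degree $q+t-1$ in $\mathscr{B}(F(B),G(C))$. Set
\[
y=y_0+\lambda v,\qquad x=x_0+\mu u,
\]
where $\lambda,\mu\in\{\pm1\}$ are signs to be determined. Since $u$ and $v$ are cocycles, $d(x)=D_{\tilde{\delta}}(\alpha)$ and $d(y)=D_{\tilde{\delta}}(\beta)$, so $(x,y)$ is an admissible choice.

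The defining formula of $m_{\tilde{\delta}}$ is linear in $(x,y)$ once $\alpha,\beta,s$ and $\tilde{\delta}$ are fixed. Hence
\[
m_{\tilde{\delta}}(\alpha,\beta,s,x,y)-m_{\tilde{\delta}}(\alpha,\beta,s,x_0,y_0)=-(-1)^t\lambda\, vF(\alpha)-(-1)^{t+q+tq}\mu\, G(\beta)u,
\]
exactly as in the computation at the end of the previous proof. Taking classes, this difference equals $-(-1)^t\lambda[v]\circ F(a)-(-1)^{t+q+tq}\mu\, G(b)\circ[u]$. Choosing $\lambda=-(-1)^t$ and $\mu=-(-1)^{t+q+tq}$ makes it equal to $[v]\circ F(a)+G(b)\circ[u]$, as required.

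The only care needed is the sign bookkeeping, together with the observation that the two summands of $M_{\delta}(b,a)$ are hit independently by varying $y$ and $x$ separately. There is no genuine obstacle: $s$, $\alpha$, $\beta$ and $\tilde{\delta}$ never need to change, and all other elements of the coset are obtained purely through the choices of $x$ and $y$.
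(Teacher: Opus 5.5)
Your proposal is correct and matches the paper's proof essentially verbatim. The paper also writes an arbitrary element of the coset as $[m_{\tilde{\delta}}(\alpha,\beta,s,x,y)]+G(b)\circ[h]+[h']\circ F(a)$ and absorbs the cocycles by setting $x'=x-(-1)^{t+q+tq}h$ and $y'=y-(-1)^t h'$, which are exactly your choices $\mu=-(-1)^{t+q+tq}$ and $\lambda=-(-1)^t$.
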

\begin{proof}
    Let $x,y$ be as in Definition \ref{delta-product}. Then, any element of $\langle b,a\rangle_{\delta} $ is of the form
    $$ [m_{\tilde{\delta}}(\alpha,\beta,s,x,y)]+ G(b)\circ [h] + [h']\circ F(a) $$
    for some cocycles $h,h'$. Operating with classes, we see that this is the same as the class of
    $$ D_{\tilde{\delta}}(s) -(-1)^t \big( (y - (-1)^th')F(\alpha) + (-1)^{q+tq}G(\beta)(x - (-1)^{t+q+tq}h) \big) $$
    which is exactly $m_{\tilde{\delta}}(\alpha,\beta,s,x',y')$ by taking
    $$x' = x - (-1)^{t+q+tq}h \hspace{4mm}
    \text{and} \hspace{4mm} y' = y - (-1)^th',$$
     and which obviously satisfy $D_{\tilde{\delta}}(\alpha) = d(x') $ and $D_{\tilde{\delta}}(\beta)=d(y') $.
\end{proof}

The following proposition shows that the naturality defect product encodes the failure of naturality of $\delta$ before taking cohomology classes.

\begin{proposition}
    Suppose $F,G\colon \mathscr{A}\to \mathscr{B}$ are dg functors and $\tilde{\delta}\colon F\Rightarrow G$ is a graded natural transformation such that $\tilde{\delta}_X$ is a cocycle for every $X\in \mathscr{A}$. Define $\delta\colon H^*(F)\Rightarrow  H^*(G) $ by $\delta_X= [\tilde{\delta}_X]$. Then, the corresponding naturality defect product vanishes wherever it is defined.
\end{proposition}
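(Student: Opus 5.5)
Since the naturality defect product does not depend on any choice (by the well-definedness proposition above), we may compute it using the given cocycles $\tilde{\delta}_X$ as the representatives of $\delta_X$. Here "graded natural transformation $\tilde{\delta}\colon F\Rightarrow G$" means that for every homogeneous $\alpha\colon A\to B$ in $\mathscr{A}$ we have $\tilde{\delta}_B F(\alpha) = (-1)^{t|\alpha|} G(\alpha)\tilde{\delta}_A$ strictly, at the level of morphisms. In other words, $D_{\tilde{\delta}}(\alpha)=0$ for \emph{every} homogeneous $\alpha$, not merely for cocycles. First I would check that $\delta$ is a graded natural transformation $H^*(F)\Rightarrow H^*(G)$ of degree $t$. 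Each $\tilde{\delta}_X$ is a cocycle, so $\delta_X$ is defined, and the strict relation descends to cohomology classes. So the naturality defect product is defined for every pair $a,b$ with $ba=0$.

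Next, given $a=[\alpha]$ and $b=[\beta]$ with $ba=0$, choose $s$ with $d(s)=\beta\alpha$. Since $D_{\tilde{\delta}}(\alpha)=0=D_{\tilde{\delta}}(\beta)$, we may take $x=0$ and $y=0$ as the required elements with $d(x)=D_{\tilde{\delta}}(\alpha)$ and $d(y)=D_{\tilde{\delta}}(\beta)$. Then
$$m_{\tilde{\delta}}(\alpha,\beta,s,0,0) = D_{\tilde{\delta}}(s) - (-1)^t\big(0\cdot F(\alpha) + (-1)^{q+tq}G(\beta)\cdot 0\big) = D_{\tilde{\delta}}(s),$$
and $D_{\tilde{\delta}}(s)=0$ by strict naturality applied to the homogeneous morphism $s$ of degree $p+q-1$. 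Hence the zero class lies in $\langle b,a\rangle_{\delta}$, which is exactly the definition of vanishing.

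There is no real obstacle. The one point needing care is the interpretation of the hypothesis: strict graded naturality must hold for all homogeneous morphisms, including non-cocycles such as $s$. If only naturality on cocycles were assumed, $D_{\tilde{\delta}}(s)$ would need not vanish and the argument would fail. I would state this reading explicitly at the start of the proof.
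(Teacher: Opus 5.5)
Your proposal is correct and follows essentially the same route as the paper: take $\tilde{\delta}$ itself as the representative, note that strict graded naturality gives $D_{\tilde{\delta}}(\alpha)=D_{\tilde{\delta}}(\beta)=0$ so that $x=y=0$ are valid choices, and conclude that $m_{\tilde{\delta}}(\alpha,\beta,s,0,0)=D_{\tilde{\delta}}(s)=0$. Your explicit remark that naturality must hold for all homogeneous morphisms, including the non-cocycle $s$, is the correct reading of the hypothesis and is exactly what the paper uses in its last step.
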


\begin{proof}
    Let $a= [\alpha], b=[\beta] $ be composable classes with $\beta\alpha = d(s)$ for some $s$. We take $\tilde{\delta}$ itself as the representative of $\delta$. Since $\tilde{\delta}$ is graded natural, we have that both $D_{\tilde{\delta}}(\alpha)$ and $D_{\tilde{\delta}}(\beta)$ vanish, so we can take both $x$ and $y$ to be zero. With these choices, we get
    $$ m_{\tilde{\delta}}(\alpha,\beta,s,x,y) = D_{\tilde{\delta}}(s) $$
    which is again zero since $\tilde{\delta}$ is graded natural.
    
\end{proof}

\begin{corollary} \label{identity_product}
    Let $F\colon \mathscr{A}\to \mathscr{B}$ be a dg functor. Every naturality defect product with respect to the identity transformation of $H^*(F)$ vanishes.
\end{corollary}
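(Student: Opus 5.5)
This is an immediate application of the preceding proposition, so the plan is simply to verify its hypotheses for the identity. Take $G=F$ and let $\mathrm{id}\colon F\Rightarrow F$ be the identity dg natural transformation. Its components are $\tilde{\delta}_X=\id_{F(X)}$, which have degree $t=0$. Each $\id_{F(X)}$ is a cocycle, since $d(\id)=0$ in any dg category. The family is graded natural in the strict sense: for every homogeneous $\alpha\colon A\to B$ we have
$$\id_{F(B)}F(\alpha)=F(\alpha)=(-1)^{0\cdot|\alpha|}F(\alpha)\id_{F(A)}.$$
Moreover, $\delta_X=[\id_{F(X)}]$ is precisely the identity transformation of $H^*(F)$.

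The previous proposition then says that the naturality defect product associated with $\delta$ vanishes wherever it is defined. This is exactly the statement of the corollary.

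For completeness, here is the direct argument that makes the vanishing explicit. Choose $\tilde{\delta}=\id$, so that $D_{\tilde{\delta}}(\alpha)=F(\alpha)-F(\alpha)=0$ for every $\alpha$. We may then take $x=y=0$, which gives $m_{\tilde{\delta}}(\alpha,\beta,s,0,0)=D_{\tilde{\delta}}(s)=0$. By the well-definedness proposition, the coset $\langle b,a\rangle_{\mathrm{id}}$ does not depend on these choices, so it contains $0$ and hence vanishes. There is no real obstacle; the only point to check is that the identity has degree $0$, so that no signs appear.
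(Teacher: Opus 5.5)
Your proof is correct and matches the paper's approach: the paper states this corollary as an immediate consequence of the preceding proposition, applied to the strictly natural family of degree-$0$ cocycles $\tilde{\delta}_X=\id_{F(X)}$. Your explicit check, namely $D_{\tilde{\delta}}\equiv 0$ so one may take $x=y=0$ and get $m_{\tilde{\delta}}=D_{\tilde{\delta}}(s)=0$, is exactly that proposition's proof specialized to the identity.
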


Naturality defect products are subsets of morphism groups of $H^*(\mathscr{B})$, which we can add and compose. Namely, for subsets $X,Y$ of the same group of morphisms and suitable morphisms $f,g$, we set
$$X+Y=\{x+y\mid x\in X,\ y\in Y\}, \quad f\circ X=\{f\circ x\mid x\in X\}, \quad X\circ g=\{x\circ g\mid x\in X\}.$$

\begin{proposition} \label{product_props}
    Let $F,G, I\colon \mathscr{A}\to \mathscr{B}$ be dg functors, $\delta\colon H^*(F)\Rightarrow H^*(G)$, $\varepsilon\colon H^*(G)\Rightarrow H^*(I)$ graded natural transformations. Let 
    $$A  \overset{a}{\longrightarrow} B  \overset{b}{\longrightarrow} C  \overset{c}{\longrightarrow} D $$
    be morphisms in $H^*(\mathscr{A})$. Then, when the corresponding  naturality defect products are defined,
    \begin{enumerate}
        \item $\langle b,a\rangle_{\delta}$ vanishes provided $a$, $b$ or $\delta$ vanish.
        \item $\langle b,a\rangle_{\delta}$ is linear in $a$, $b$ and $\delta$, i.e., given $a'$, $b'$ and $\delta'$ with the same source, target and degree as $a$, $b$ and $\delta$ respectively, we have the following relations between subsets of $H^*(\mathscr{B})(F(A),G(C))$:
            \begin{align*}
                \langle b,a+a'\rangle_{\delta} &\subseteq \langle b,a\rangle_{\delta} + \langle b,a'\rangle_{\delta}, \\
                \langle b+b',a\rangle_{\delta} &\subseteq \langle b,a\rangle_{\delta}+ \langle b',a\rangle_{\delta},\\
                \langle b,a\rangle_{\delta+\delta'} &= \langle b,a\rangle_{\delta} + \langle b,a\rangle_{\delta'}.
            \end{align*}
        \item $\langle b,a\rangle_{\delta}$ is compatible with composition, i.e., we have the following inclusions between subsets of $H^*(\mathscr{B})(F(A),G(D))$:
            \begin{align*}
                \langle c,b\rangle_{\delta}\circ F(a) &\subseteq \langle c,ba\rangle_{\delta}, \\
                (-1)^{|c|(|\delta|+1)}\,G(c)\circ \langle b,a\rangle_{\delta} &\subseteq \langle cb,a\rangle_{\delta},
            \end{align*}
            and the first (second) inclusion is an equality if $F(a)$ (respectively $G(c)$) is invertible.
        \item As subsets of $H^*(\mathscr{B})(F(A),G(D))$,
            $$\langle cb, a\rangle_{\delta} \cap \langle c, ba \rangle_{\delta} \neq \varnothing$$
            and, if $b$ is invertible, then $\langle cb, a\rangle_{\delta} = \langle c, ba \rangle_{\delta}$.
            \item We have the inclusion of subsets of $H^*(\mathscr{B})(F(A),I(C))$:
        $$ \varepsilon_C \circ \langle b,a\rangle_{\delta} + (-1)^{|\delta| (|a|+|b|+1)} \langle b,a\rangle_{\varepsilon}\circ\delta_A \subseteq \langle b,a\rangle_{\varepsilon\circ\delta}, $$
        which is an equality if $\delta$ or $\varepsilon$ is a natural isomorphism. If both are, the equation holds in the quotient group by $M_{\varepsilon\circ\delta}(b,a)$.
    \end{enumerate}
\end{proposition}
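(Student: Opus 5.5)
The plan is to work throughout at the level of representatives. Proposition \ref{all_elements_come_from_choices} lets me fix $\alpha,\beta,\gamma,\tilde\delta,s$ and realize every element of a product by varying only $x,y$. Each item then reduces to exhibiting explicit choices of $s,x,y$ for one product in terms of choices for the others, and computing the difference of the resulting $m$'s as a coboundary, or as an element of the indeterminacy. The derivation identity for $D_{\tilde\delta}$ from the first Lemma is the main computational tool.

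For (1): if $a=0$, take $\alpha=0$, $s=0$, $x=0$. Then $m=-(-1)^{t+q+tq}G(\beta)x$ with $x$ arbitrary among those allowed, and the class of $m$ lies in $M_\delta(b,a)$; the case $b=0$ is symmetric. If $\delta=0$, choose $\tilde\delta=0$, so $D_{\tilde\delta}=0$ and $x=y=0$ give $m=0$.

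For (2), additivity in $a$ uses $\alpha+\alpha'$, $s+s'$, $x+x'$, $y$. This gives $m(\alpha+\alpha',\dots)=m(\alpha,\beta,s,x,y)+m(\alpha',\beta,s',x',0)+$ a term $yF(\alpha')$ which I redistribute. The right-hand side is a sum of cosets, so the inclusion follows. Linearity in $\delta$ uses $\tilde\delta+\tilde\delta'$, with $D$ additive. Equality holds because any $x,y$ for $\delta+\delta'$ can be split as $(x,y)$ for $\delta$ plus $(x',y')$ for $\delta'$, after fixing one decomposition and absorbing the cocycle differences via Proposition \ref{all_elements_come_from_choices}.

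For (3), given choices $s,x,y$ for $(b,c)$, I will use $s\alpha$ (up to sign) for $\gamma\beta\alpha$ and the derivation formula to build choices for $(c,ba)$. Namely, for $\beta\alpha$ I take $x_{\beta\alpha}=yF(\alpha)\pm G(\beta)x$, which satisfies $d(x_{\beta\alpha})=D(\beta\alpha)$ by the Lemma. The expression $m(\gamma,\beta\alpha,\dots)$ then collapses to $m(\beta,\gamma,\dots)F(\alpha)$ up to a coboundary. Equality when $F(a)$ is invertible follows because composing with $F(a)^{-1}$ maps indeterminacies onto each other, giving the reverse inclusion.

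Item (4) is the step I expect to be the main obstacle. Here I need a common element of $\langle cb,a\rangle$ and $\langle c,ba\rangle$. Take $\gamma\beta\alpha$ with null-homotopies $u$ for $\gamma\beta$ and $v$ for $\beta\alpha$, and with $x,y,z$ for $\alpha,\beta,\gamma$. For $\langle cb,a\rangle$ use $s=u\alpha$ and $y_{\gamma\beta}=zF(\beta)\pm G(\gamma)y$. For $\langle c,ba\rangle$ use $s=\gamma v$ and $x_{\beta\alpha}=yF(\alpha)\pm G(\beta)x$. But $cb$ and $ba$ need not vanish: the hypothesis is only that both products are defined, so $cb=0$ and $ba=0$, and then $u\alpha$ and $\gamma v$ both bound $\gamma\beta\alpha$. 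Their difference is a cocycle, and $D$ of it is a coboundary. So the two $m$'s agree modulo exactly the cross terms, which cancel after careful sign bookkeeping; this is the expected technical heart. When $b$ is invertible, the indeterminacies $G(cb)H+H F(a)$ and $G(c)H+HF(ba)$ coincide, which upgrades a nonempty intersection of cosets to equality.

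For (5), pick representatives $\tilde\varepsilon$ and $\tilde\delta$, and set $\widetilde{\varepsilon\delta}=\tilde\varepsilon G(-)$-composed with $\tilde\delta$, i.e.\ $\tilde\varepsilon_X\tilde\delta_X$. Then $D_{\tilde\varepsilon\tilde\delta}(\alpha)=\tilde\varepsilon_B D_{\tilde\delta}(\alpha)\pm D_{\tilde\varepsilon}(\alpha)\tilde\delta_A$, so $x=\tilde\varepsilon_Bx_\delta\pm x_\varepsilon\tilde\delta_A$ is an admissible choice, and likewise for $y$. Substituting gives $m_{\varepsilon\delta}=\tilde\varepsilon_Cm_\delta\pm m_\varepsilon\tilde\delta_A$ exactly, which proves the inclusion. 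If $\delta$ is an isomorphism, any $x,y$ for $\varepsilon\delta$ differ from these by cocycles $h$. Writing $h=(h\delta_A^{-1})\delta_A$ absorbs them into the choices for $\varepsilon$ via Proposition \ref{all_elements_come_from_choices}, giving equality; the case where $\varepsilon$ is an isomorphism is symmetric. When both are isomorphisms, both sides are cosets of the same subgroup $M_{\varepsilon\circ\delta}(b,a)$ sharing an element, hence equal in the quotient.
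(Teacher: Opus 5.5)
Your argument for (4) has a genuine gap. The products $\langle cb,a\rangle_{\delta}$ and $\langle c,ba\rangle_{\delta}$ are both defined as soon as $cba=0$. Nothing forces $cb=0$ or $ba=0$, so the null-homotopies $u$ of $\gamma\beta$ and $v$ of $\beta\alpha$ behind your choices $s=u\alpha$ and $s=\gamma v$ need not exist. Your sentence deducing $cb=0$ and $ba=0$ from definedness is false.

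This is not a harmless special case. When $b$ is invertible, $cb=0$ forces $c=0$ and $ba=0$ forces $a=0$, so your proof of $\langle cb,a\rangle_{\delta}=\langle c,ba\rangle_{\delta}$ covers only the trivial situation. Yet the invertible case with $cb\neq 0\neq ba$ is exactly how part (4) is used in the proof of Theorem \ref{main_theorem}, to replace an acyclic complex by a homotopy equivalent exact triangle.

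The missing observation is that both definitions only need a null-homotopy of the triple composite. So you may use one and the same $s$ with $d(s)=\gamma\beta\alpha$ for both products. Let $p,q,r,t$ be the degrees of $a,b,c,\delta$. Your auxiliary choices $\bar x=yF(\alpha)+(-1)^{q+tq}G(\beta)x$ and $\bar y=zF(\beta)+(-1)^{r+tr}G(\gamma)y$ depend only on $x,y,z$, which exist for arbitrary cocycles by naturality. The derivation rule gives $d(\bar x)=D_{\tilde\delta}(\beta\alpha)$ and $d(\bar y)=D_{\tilde\delta}(\gamma\beta)$. Expanding, $m_{\tilde\delta}(\beta\alpha,\gamma,s,\bar x,z)$ and $m_{\tilde\delta}(\alpha,\gamma\beta,s,x,\bar y)$ are literally equal, both being
\[
D_{\tilde\delta}(s)-(-1)^t\bigl(zF(\beta\alpha)+(-1)^{r+tr}G(\gamma)yF(\alpha)+(-1)^{(q+r)(t+1)}G(\gamma\beta)x\bigr).
\]
So there are no cross terms to cancel, and your coset argument for invertible $b$ then goes through.

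The remaining parts follow the paper's route, with two small corrections in (5). First, $m_{\varepsilon\delta}=\tilde\varepsilon_C m_\delta\pm m_\varepsilon\tilde\delta_A$ does not hold exactly. It holds only up to a coboundary $d(\pm y_\varepsilon x_\delta)$, which appears when you commute $\tilde\varepsilon_C$ past $G(\beta)$ and $G(\alpha)$ past $\tilde\delta_A$; this is harmless for classes. Second, in the equality case, absorbing the variation of the $y$-choice into $M_\varepsilon(b,a)\circ\delta_A$ needs the naturality of $\delta$, to rewrite $[h']\circ F(a)$ as $\pm[h']\delta_B^{-1}\circ G(a)\circ\delta_A$. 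The factorization $h=(h\delta_A^{-1})\delta_A$ alone does not do this.
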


\begin{proof}
    We first fix representatives $\alpha,\beta,\gamma$ of $a,b,c$, representatives $\tilde{\delta}_X$ of $\delta_X$ and $\tilde{\varepsilon}_X$ of $\varepsilon_X$ for every object $X$ in $\mathscr{A}$, and $\alpha',\beta',\tilde{\delta}'$ of $a',b',\delta'$. We set degrees $|a|=p$, $|b|=q$, $|c|=r$, $|\delta|=t$ and $|\varepsilon|=t'$. Recall that $\langle b,a\rangle_{\delta}$ is a coset of $M_{\delta}(b,a)$, and similarly for the other products, and that an inclusion between two cosets of the same subgroup is an equality.
    
    Part $(1)$ follows from the definition by making the obvious choices to get the null cocycle. 
    
    Part $(2)$ follows from linearity of the differential and $D_{\tilde{\delta}}$. Let us check linearity in $a$; the case of $b$ is similar. Let $s,s'$ be such that $d(s)=\beta\alpha$ and $d(s')=\beta\alpha'$, and let $x$ be such that $d(x)=D_{\tilde{\delta}}(\alpha)$. By Proposition \ref{all_elements_come_from_choices}, every element of $\langle b,a+a'\rangle_{\delta}$ is of the form
    $$ \Big[\, D_{\tilde{\delta}}(s+s')- (-1)^t\bigl( yF(\alpha+\alpha') + (-1)^{q+tq} G(\beta) x' \bigr) \,\Big] $$
    for some $x',y$ such that $d(x') = D_{\tilde{\delta}}(\alpha+\alpha')$ and $d(y) = D_{\tilde{\delta}}(\beta)$. Then,
     \begin{align*}
        D_{\tilde{\delta}}(s+s') &- (-1)^t \bigl( yF(\alpha+\alpha') + (-1)^{q+tq} G(\beta) x' \bigr) \\
        &= m_{\tilde{\delta}}(\alpha,\beta,s,x,y) \\
        &\qquad + \Bigl( D_{\tilde{\delta}}(s') - (-1)^t \bigl( yF(\alpha') + (-1)^{q+tq} G(\beta) (x'-x) \bigr) \Bigr)
    \end{align*}
    which is in $\langle b,a\rangle_{\delta}+\langle b,a'\rangle_{\delta}$, since $d(x'-x)=D_{\tilde{\delta}}(\alpha')$ and so the second term is $m_{\tilde{\delta}}(\alpha',\beta,s',x'-x,y)$. This inclusion can be strict since the inclusion of indeterminacies $M_{\delta}(b,a+a')\subseteq M_{\delta}(b,a)+M_{\delta}(b,a')$ may be strict.

    Now let us check linearity in $\delta$. Notice that $D_{\tilde{\delta}}$ is linear in $\tilde{\delta}$. Let $s$ be such that $d(s)=\beta\alpha$. Every element of $\langle b,a\rangle_{\delta}+\langle b,a\rangle_{\delta'}$ is the class of
    $$ m_{\tilde{\delta}}(\alpha,\beta,s,x,y)+m_{\tilde{\delta}'}(\alpha,\beta,s,x',y') = m_{\tilde{\delta}+\tilde{\delta}'}(\alpha,\beta,s,x+x',y+y') $$
    for some $x,y,x',y'$ such that $d(x)=D_{\tilde{\delta}}(\alpha)$, $d(y)=D_{\tilde{\delta}}(\beta)$, $d(x')=D_{\tilde{\delta}'}(\alpha)$ and $d(y')=D_{\tilde{\delta}'}(\beta)$. Since $d(x+x')=D_{\tilde{\delta}+\tilde{\delta}'}(\alpha)$ and $d(y+y')=D_{\tilde{\delta}+\tilde{\delta}'}(\beta)$, its class lies in $\langle b,a\rangle_{\delta+\delta'}$. Hence $\langle b,a\rangle_{\delta}+\langle b,a\rangle_{\delta'}\subseteq \langle b,a\rangle_{\delta+\delta'}$. Since $M_{\delta}(b,a)$ does not depend on $\delta$, all are cosets of the same subgroup, so they coincide.

    For part $(3)$, let $s$ be such that $d(s)=\gamma\beta$. By Proposition \ref{all_elements_come_from_choices}, an arbitrary element $f$ of $\langle c,b\rangle_{\delta}\circ F(a)$ is of the form
    \begin{align*}
        f & =  \Big[\, D_{\tilde{\delta}}(s) -(-1)^t \big( zF(\beta) +(-1)^{r+tr} G(\gamma) y \big) \,\Big]\circ [F(\alpha)]   \\
         & =  \Big[\, D_{\tilde{\delta}}(s)F(\alpha) - (-1)^t \big( zF(\beta\alpha) +(-1)^{r+tr} G(\gamma) yF(\alpha) \big) \,\Big],
    \end{align*}
    for some $y,z$ such that $d(y)=D_{\tilde{\delta}}(\beta)$ and $d(z)=D_{\tilde{\delta}}(\gamma)$. Let also $x\in \mathscr{B}^{p+t-1}(F(A),G(B))$ be such that $d(x)=D_{\tilde{\delta}}(\alpha)$. Then,
    \begin{align*}
        D_{\tilde{\delta}}(s\alpha) & = D_{\tilde{\delta}}(s)F(\alpha) + (-1)^{(q+r-1)t} G(s)d(x) \\
         & = D_{\tilde{\delta}}(s)F(\alpha) + (-1)^{(q+r-1)t}(-1)^{q+r-1} \big(d(G(s)x) - d(G(s))x\big) \\
         & = D_{\tilde{\delta}}(s)F(\alpha) -(-1)^{(q+r-1)(t+1)} G(\gamma\beta)x + (-1)^{(q+r-1)(t+1)}d(G(s)x).
    \end{align*}
    We get 
    \begin{align*}
        f & = \Big[\, D_{\tilde{\delta}}(s\alpha) + (-1)^{(q+r-1)(t+1)}G(\gamma\beta) x \\
         &\qquad -(-1)^t \big( zF(\beta\alpha)+(-1)^{r+tr}G(\gamma)yF(\alpha) \big) \,\Big] \\
        & = \Big[\, D_{\tilde{\delta}}(s\alpha) - (-1)^t \big( z F(\beta\alpha) \\
         &\qquad +(-1)^{r+rt} G(\gamma)(y F(\alpha) +(-1)^{q+tq} G(\beta)x) \big) \,\Big]
    \end{align*}
    with $d(s\alpha) = d(s)\alpha = \gamma\beta\alpha$ and 
    \begin{align*}
        d\big(yF(\alpha)+ (-1)^{q+tq} G(\beta)x\big) & = d(y)F(\alpha) + (-1)^{tq} G(\beta)d(x)  \\
         & = D_{\tilde{\delta}}(\beta)F(\alpha) + (-1)^{tq} G(\beta)D_{\tilde{\delta}}(\alpha) \\
          & = D_{\tilde{\delta}}(\beta\alpha).
    \end{align*}
    This implies $f\in \langle c, ba\rangle_{\delta}$. The other inclusion is analogous, provided one keeps careful track of the signs.

    If $F(a)$ is invertible, then composing with it is a bijection, so $M_{\delta}(c,b)\circ F(a) = M_{\delta}(c,ba)$ and $\langle c,b\rangle_{\delta}\circ F(a)$ is a coset of $M_{\delta}(c,ba)$, that is, it coincides with $\langle c,ba\rangle_{\delta}$. The case of $G(c)$ is similar, the sign being irrelevant.

    For part $(4)$, let $s$ be such that $d(s)=\gamma\beta\alpha$ and let $x,y,z$ be such that $d(x)=D_{\tilde{\delta}}(\alpha)$, $d(y)=D_{\tilde{\delta}}(\beta)$ and $d(z)=D_{\tilde{\delta}}(\gamma)$. As in the proof of part $(3)$, the elements
    $$ \bar{x} = yF(\alpha)+ (-1)^{q+tq} G(\beta)x \hspace{5mm} \text{and} \hspace{5mm} \bar{y} = zF(\beta)+ (-1)^{r+tr} G(\gamma)y $$
    satisfy $d(\bar{x}) = D_{\tilde{\delta}}(\beta\alpha)$ and $d(\bar{y}) = D_{\tilde{\delta}}(\gamma\beta)$. Using these choices, we get
    \begin{align*}
        m_{\tilde{\delta}}(\beta\alpha,\gamma,s,\bar{x},z) & = D_{\tilde{\delta}}(s) - (-1)^t \big( zF(\beta\alpha) + (-1)^{r+tr}G(\gamma)yF(\alpha) \\
        &\qquad + (-1)^{(q+r)(t+1)}G(\gamma\beta)x \big) \\
        & = m_{\tilde{\delta}}(\alpha,\gamma\beta,s,x,\bar{y}),
    \end{align*}
    so this class lies in both $\langle c,ba\rangle_{\delta}$ and $\langle cb,a\rangle_{\delta}$.

    If $b$ is invertible, then so are $F(b)$ and $G(b)$, and therefore
    \begin{align*}
        G(b)\circ H^{p+t-1}(\mathscr{B})(F(A),G(B)) &= H^{p+q+t-1}(\mathscr{B})(F(A),G(C)), \\
        H^{r+t-1}(\mathscr{B})(F(C),G(D))\circ F(b) &= H^{q+r+t-1}(\mathscr{B})(F(B),G(D)),
    \end{align*}
    so that $M_{\delta}(cb,a) = M_{\delta}(c,ba)$. Hence $\langle cb,a\rangle_{\delta}$ and $\langle c,ba\rangle_{\delta}$ are cosets of the same subgroup with non-empty intersection, so they coincide.

    For part $(5)$, let $s$ be such that $d(s)=\beta\alpha$. By Proposition \ref{all_elements_come_from_choices}, an arbitrary element $f$ of the left hand side is a sum of the form
    \begin{align*}
        &\Big[\, \tilde{\varepsilon} D_{\tilde{\delta}}(s) -(-1)^{t} \big( \tilde{\varepsilon} yF(\alpha) + (-1)^{q+tq} \tilde{\varepsilon}  G(\beta) x \big) \,\Big] \\
        &\qquad + (-1)^{t(p+q+1)} \Big[\, D_{\tilde{\varepsilon}}(s)\tilde{\delta}  - (-1)^{t'} \big( y'G(\alpha) \tilde{\delta} + (-1)^{q+t'q} I(\beta)x' \tilde{\delta} \big) \,\Big]
    \end{align*}
    for some $x,y,x',y'$ such that $d(x)=D_{\tilde{\delta}}(\alpha)$, $d(y)=D_{\tilde{\delta}}(\beta)$, $d(x')=D_{\tilde{\varepsilon}}(\alpha)$ and $d(y')=D_{\tilde{\varepsilon}}(\beta)$, where we do not specify the components of the natural transformations to simplify notation. We see that
    \begin{align*}
         & (-1)^{t+q+tq} \tilde{\varepsilon} G(\beta)x + (-1)^{t(p+q+1)+t'} y'G(\alpha)\tilde{\delta}  \\
         & = (-1)^{t+tq} \big( ((-1)^q D_{\tilde{\varepsilon}}(\beta) + (-1)^{q+t'q} I(\beta)\tilde{\varepsilon} )x + (-1)^{t'} y'(\tilde{\delta} F(\alpha) - D_{\tilde{\delta}}(\alpha) ) \big)  \\
         & =  (-1)^{t+tq} \big( (-1)^{q+t'q} I(\beta)\tilde{\varepsilon} x + (-1)^{t'} y'\tilde{\delta} F(\alpha) + (-1)^q d(y')x - (-1)^{t'} y'd(x) \big) \\
         & =  (-1)^{t+tq} \big( (-1)^{q+t'q} I(\beta)\tilde{\varepsilon} x + (-1)^{t'} y'\tilde{\delta} F(\alpha)\big) + d((-1)^{t+q+tq}y'x) .
    \end{align*}
    It follows that $f$ equals the class of
    \begin{align*}
        &\tilde{\varepsilon} D_{\tilde{\delta}}(s) + (-1)^{t(p+q+1)} D_{\tilde{\varepsilon}}(s) \tilde{\delta} \\
        &\qquad - (-1)^t\tilde{\varepsilon} y F(\alpha) - (-1)^{tp+tq+t+t'+q+t'q}I(\beta)x'\tilde{\delta} \\
        &\qquad - (-1)^{t+(t+t'+1)q}I(\beta)\tilde{\varepsilon} x - (-1)^{t+t'+tq}y'\tilde{\delta} F(\alpha) \\
        &= \tilde{\varepsilon} \tilde{\delta} F(s) - (-1)^{t(p+q-1)}\tilde{\varepsilon} G(s)\tilde{\delta} \\
        &\qquad + (-1)^{t(p+q+1)} \tilde{\varepsilon} G(s)\tilde{\delta} - (-1)^{(t+t')(p+q-1)}I(s)\tilde{\varepsilon}\tilde{\delta} \\
        &\qquad- (-1)^{t} \bigl(\tilde{\varepsilon} y +(-1)^{tq+t'} y'\tilde{\delta}\bigr) F(\alpha) \\
        &\qquad - (-1)^{t+(t+t'+1)q} I(\beta)\bigl( \tilde{\varepsilon} x + (-1)^{t'+tp} x' \tilde{\delta} \bigr) \\
        &= D_{\tilde{\varepsilon}\circ \tilde{\delta}}(s) - (-1)^{t+t'}\Bigl(((-1)^{t'}\tilde{\varepsilon} y + (-1)^{tq}y'\tilde{\delta})F(\alpha) \\
        &\qquad + (-1)^{q+(t+t')q}I(\beta)\bigl((-1)^{tp}x'\tilde{\delta}+ (-1)^{t'}\tilde{\varepsilon} x\bigr)  \Bigr).
    \end{align*}
    It remains to check that
    \begin{align*}
        d\big((-1)^{t'} \tilde{\varepsilon} x + (-1)^{tp} x'\tilde{\delta}\big) & = \tilde{\varepsilon} d(x) + (-1)^{tp} d(x')\tilde{\delta} \\
         & = \tilde{\varepsilon} D_{\tilde{\delta}}(\alpha) + (-1)^{tp} D_{\tilde{\varepsilon}}(\alpha) \tilde{\delta} \\
         & = \tilde{\varepsilon} \tilde{\delta} F(\alpha) - (-1)^{tp}\tilde{\varepsilon} G(\alpha)\tilde{\delta} \\
         &\qquad + (-1)^{tp}\tilde{\varepsilon} G(\alpha)\tilde{\delta} - (-1)^{(t+t')p}I(\alpha)\tilde{\varepsilon}\tilde{\delta} \\
         & = D_{\tilde{\varepsilon}\circ\tilde{\delta}}(\alpha),
    \end{align*}
    and similarly
    $$d\big((-1)^{t'} \tilde{\varepsilon} y + (-1)^{tq} y'\tilde{\delta}\big) = D_{\tilde{\varepsilon}\circ\tilde{\delta}}(\beta).$$
    Hence $f\in \langle b,a\rangle_{\varepsilon\circ\delta}$.

    If $\delta$ is a natural isomorphism, then $M_{\varepsilon}(b,a)\circ\delta_A=M_{\varepsilon\circ\delta}(b,a)$ by the naturality of $\delta$, while $\varepsilon_C\circ M_{\delta}(b,a)\subseteq M_{\varepsilon\circ\delta}(b,a)$ by the naturality of $\varepsilon$. Hence the left hand side is a coset of $M_{\varepsilon\circ\delta}(b,a)$ and the inclusion is an equality. The case of $\varepsilon$ is symmetric. If both are natural isomorphisms, each summand is a coset of $M_{\varepsilon\circ\delta}(b,a)$, so the equation holds in the quotient group by this subgroup.
\end{proof}

\begin{corollary} \label{inverse_product}
    Let $\delta\colon H^*(F)\Rightarrow H^*(G)$ be a graded natural isomorphism. Then, for any morphisms $a\colon A\to B$ and $b\colon B\to C$ in $H^*(\mathscr{A})$ such that $ba=0$, the naturality defect product $\langle b,a\rangle_{\delta}$ vanishes if and only if $\langle b,a\rangle_{\delta^{-1}}$ does.
\end{corollary}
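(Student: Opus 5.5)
The plan is to apply part $(5)$ of Proposition \ref{product_props} to the composite $\delta^{-1}\circ\delta=\id$ of natural isomorphisms, together with Corollary \ref{identity_product}. Take $\varepsilon=\delta^{-1}\colon H^*(G)\Rightarrow H^*(F)$, a graded natural transformation of degree $-t$ if $|\delta|=t$. Then $\varepsilon\circ\delta$ is the identity transformation of $H^*(F)$. By Corollary \ref{identity_product}, the product $\langle b,a\rangle_{\varepsilon\circ\delta}$ vanishes, i.e.\ it equals the subgroup $M_{\varepsilon\circ\delta}(b,a)$.

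Since $\delta$ is a natural isomorphism, part $(5)$ gives the equality
$$ \delta^{-1}_C\circ\langle b,a\rangle_{\delta} + (-1)^{t(|a|+|b|+1)}\langle b,a\rangle_{\delta^{-1}}\circ\delta_A = M_{\delta^{-1}\circ\delta}(b,a). $$
Both summands are cosets of $M_{\delta^{-1}\circ\delta}(b,a)$; this was shown in the proof of part $(5)$, using that $\delta$ and $\delta^{-1}$ are both isomorphisms. Hence, in the quotient group by $M_{\delta^{-1}\circ\delta}(b,a)$, the first summand is the negative of the second up to sign. So one vanishes if and only if the other does.

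It remains to translate vanishing of these transformed cosets back to vanishing of the original products.

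\begin{itemize}
\item Composing with the isomorphism $\delta^{-1}_C$ is a bijection $H^*(\mathscr{B})(F(A),G(C))\to H^*(\mathscr{B})(F(A),F(C))$. It carries $M_\delta(b,a)$ onto $M_{\delta^{-1}\circ\delta}(b,a)$, by naturality of $\delta^{-1}$: $\delta^{-1}_C G(b)=F(b)\delta^{-1}_B$ up to sign, and $\delta^{-1}_B$ is invertible. Hence $\delta^{-1}_C\circ\langle b,a\rangle_\delta$ is the zero coset exactly when $\langle b,a\rangle_\delta$ vanishes.
\item Similarly, precomposition with $\delta_A$ carries $M_{\delta^{-1}}(b,a)$ onto $M_{\delta^{-1}\circ\delta}(b,a)$, using $G(a)\delta_A=\pm\delta_B F(a)$. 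Hence $\langle b,a\rangle_{\delta^{-1}}\circ\delta_A$ vanishes exactly when $\langle b,a\rangle_{\delta^{-1}}$ does.
\end{itemize}

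Combining these gives the equivalence. The main point needing care is the identification of the indeterminacy subgroups under these bijections. This is the same argument as in the last paragraph of the proof of part $(5)$, and signs are irrelevant since subgroups are closed under negation.
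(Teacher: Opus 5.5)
Your proposal is correct and follows the paper's proof: apply part $(5)$ of Proposition \ref{product_props} to $\delta^{-1}\circ\delta=\id$, use Corollary \ref{identity_product} to see that the right-hand side vanishes, and then transfer vanishing back through the invertible morphisms $\delta^{-1}_C$ and $\delta_A$. Your explicit check that these compositions carry $M_\delta(b,a)$ and $M_{\delta^{-1}}(b,a)$ onto $M_{\id}(b,a)$ spells out the step the paper summarizes as ``the invertibility of $\delta_C$ and $\delta_A$''.
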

\begin{proof}
    By part $(5)$ of Proposition \ref{product_props}, the equation
    $$ \delta_{C}^{-1} \circ \langle b,a \rangle_{\delta} \pm \langle b,a \rangle_{\delta^{-1}}\circ \delta_A = \langle b,a \rangle_{\id} $$
    holds in the quotient group by $M_{\id}(b,a)$. The right hand side vanishes by Corollary \ref{identity_product}, so one summand vanishes when the other does, and the result follows from the invertibility of $\delta_C$ and $\delta_A$.
\end{proof}

The naturality defect product does not only behave with respect to vertical composition but also with horizontal composition of graded natural transformations. We recall the definition in order to fix a sign convention.

\begin{definition}
    Let $F,F'\colon \mathscr{A} \to \mathscr{B}$, $G,G'\colon \mathscr{B}\to \mathscr{C}$ be dg functors and $\delta\colon F\Rightarrow F', \varepsilon\colon G\Rightarrow G'$ graded natural transformations of degree $t$ and $t'$ respectively. The \emph{horizontal composition} $\varepsilon*\delta\colon G\circ F \Rightarrow G'\circ F'$ is the graded natural transformation of degree $t+t'$ defined by
    $$ (\varepsilon*\delta)_X = \varepsilon_{F'(X)}\circ G(\delta_X) = (-1)^{tt'} G'(\delta_X)\circ \varepsilon_{F(X)}. $$
\end{definition}

\begin{proposition} \label{horizontal_props}
    Let $F,F'\colon \mathscr{A} \to \mathscr{B}$, $G,G'\colon \mathscr{B}\to \mathscr{C}$ be dg functors, $\delta\colon H^*(F)\Rightarrow H^*(F'), \varepsilon\colon H^*(G)\Rightarrow H^*(G')$ graded natural transformations and $a,b$ be morphisms in $H^*(\mathscr{A})$ such that their naturality defect products are defined. Then,
    $$\varepsilon_{F'(C)}\circ G(\langle b,a\rangle_{\delta} ) + (-1)^{|\delta| (|a|+|b|+1)} \langle F'(b),F'(a)\rangle_{\varepsilon}\circ G(\delta_A) \subseteq \langle b,a\rangle_{\varepsilon*\delta}, $$
    which is an equality if $\delta$ is a natural isomorphism. If, moreover, $\varepsilon$ is a natural isomorphism and $H^*(G)$ is full, the equation holds in the quotient group by $M_{\varepsilon*\delta}(b,a)$.
\end{proposition}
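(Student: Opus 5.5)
The plan is to imitate the proof of part $(5)$ of Proposition \ref{product_props}, using the horizontal composite in place of the vertical one. Fix representatives $\alpha,\beta$ of $a,b$ with $d(s)=\beta\alpha$, and cocycle representatives $\tilde{\delta}_X$ and $\tilde{\varepsilon}_Y$. Take as representative of $(\varepsilon*\delta)_X$ the cocycle $\tilde{\varepsilon}_{F'(X)}G(\tilde{\delta}_X)$. Write $|\delta|=t$ and $|\varepsilon|=t'$.

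The first step is a Leibniz-type formula for $D_{\widetilde{\varepsilon*\delta}}$ with respect to the dg functors $GF$ and $G'F'$:
$$D_{\widetilde{\varepsilon*\delta}}(\alpha)=\tilde{\varepsilon}_{F'(B)}\,G\bigl(D_{\tilde{\delta}}(\alpha)\bigr)+(-1)^{tp}\,D_{\tilde{\varepsilon}}\bigl(F'(\alpha)\bigr)\,G(\tilde{\delta}_A).$$
This is checked by expanding: the middle terms $\pm\tilde{\varepsilon}G(F'(\alpha))G(\tilde{\delta})$ cancel, exactly as in part $(5)$.

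By Proposition \ref{all_elements_come_from_choices}, an element of the left-hand side is given by classes of $m_{\tilde{\delta}}(\alpha,\beta,s,x,y)$ and $m_{\tilde{\varepsilon}}(F'(\alpha),F'(\beta),F'(s),x',y')$. Here $d(x')=D_{\tilde{\varepsilon}}(F'(\alpha))$ and $d(y')=D_{\tilde{\varepsilon}}(F'(\beta))$. We apply $G$ to the first cocycle and postcompose with $\tilde{\varepsilon}$, and we precompose the second with $G(\tilde{\delta}_A)$. Note that $F'(\beta)F'(\alpha)=d(F'(s))$. The new choices for $\varepsilon*\delta$ will be
$$\hat{x}=(-1)^{t'}\tilde{\varepsilon}\,G(x)+(-1)^{tp}x'\,G(\tilde{\delta}_A)$$
and the analogous $\hat{y}$. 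The formula above shows $d(\hat{x})=D_{\widetilde{\varepsilon*\delta}}(\alpha)$, and similarly for $\hat{y}$.

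The computational core is then to show that the sum equals $m_{\widetilde{\varepsilon*\delta}}(\alpha,\beta,s,\hat{x},\hat{y})$ modulo a coboundary. The two cross terms $\tilde{\varepsilon}G(F'(\beta))G(x)$ and $y'G(F'(\alpha))G(\tilde{\delta})$ do not directly have the right shape. They are rewritten using $D_{\tilde{\varepsilon}}(F'(\beta))=d(y')$ and $D_{\tilde{\delta}}(\alpha)=d(x)$, up to the coboundary $d(\pm y'G(x))$, as in part $(5)$. Tracking signs is the main obstacle, and I would do it term by term, mirroring part $(5)$ with $G$ inserted.

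For equality when $\delta$ is an isomorphism, it suffices to show the left side is a coset of $M_{\varepsilon*\delta}(b,a)$. The term $G$ applied to $M_\delta(b,a)$ and postcomposed with $\varepsilon$ lands in $M_{\varepsilon*\delta}(b,a)$ by naturality of $\varepsilon$. Next, $M_\varepsilon(F'(b),F'(a))\circ G(\delta_A)$ is computed as follows. Its first summand, $G'F'(b)\circ H^*(\mathscr{C})(GF'(A),G'F'(B))\circ G(\delta_A)$, equals $G'F'(b)\circ H^*(\mathscr{C})(GF(A),G'F'(B))$ because $G(\delta_A)$ is invertible. Its second summand, $H^*\circ GF'(a)G(\delta_A)=H^*\circ G(\delta_B)GF(a)$, equals $H^*(\mathscr{C})(GF(B),G'F'(C))\circ GF(a)$, again using invertibility of $G(\delta_B)$. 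Hence the left side contains a full coset of $M_{\varepsilon*\delta}(b,a)$, and the inclusion of cosets forces equality.

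For the last claim, suppose both are isomorphisms and $H^*(G)$ is full. Then $\varepsilon\circ G(M_\delta(b,a))$ is all of $M_{\varepsilon*\delta}(b,a)$. Indeed, any element $G'F'(b)\circ h$ with $h$ a class $GF(A)\to G'F'(B)$ can be written $\varepsilon G(F'(b)\,k)$: we take $\varepsilon^{-1}h=G(k)$ using fullness and invertibility of $\varepsilon$, and apply naturality. The other summand is handled symmetrically. So each summand on the left is a coset of $M_{\varepsilon*\delta}(b,a)$, and the relation holds in the quotient group.
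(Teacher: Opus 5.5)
Your proposal is correct, but it is organized differently from the paper's proof. The computation you defer does close up. With $p=|a|$, $q=|b|$ and your choices $\hat{x},\hat{y}$, the $s$-terms combine into $D_{\widetilde{\varepsilon*\delta}}(s)$ by your Leibniz formula, since $(-1)^{t(p+q+1)}=(-1)^{t(p+q-1)}$. The two cross terms, rewritten via $d(y')=D_{\tilde{\varepsilon}}(F'(\beta))$ and $d(G(x))=G(D_{\tilde{\delta}}(\alpha))$, leave exactly the coboundary $-(-1)^{t+q+tq}\,d\bigl(y'G(x)\bigr)$.

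The paper never redoes this computation. It writes $\varepsilon*\delta=\varepsilon_{F'}\circ G(\delta)$ as the vertical composite of the whiskerings $G(\delta)\colon H^*(GF)\Rightarrow H^*(GF')$ and $\varepsilon_{F'}\colon H^*(GF')\Rightarrow H^*(G'F')$, and applies part $(5)$ of Proposition~\ref{product_props} directly. It then only needs two easy comparisons: $G(\langle b,a\rangle_{\delta})\subseteq\langle b,a\rangle_{G(\delta)}$, by applying $G$ to $m_{\tilde{\delta}}(\alpha,\beta,s,x,y)$; and $\langle F'(b),F'(a)\rangle_{\varepsilon}=\langle b,a\rangle_{\varepsilon_{F'}}$, because $D_{\tilde{\varepsilon}_{F'}}(\alpha)=D_{\tilde{\varepsilon}}(F'(\alpha))$.

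Your computation is literally the part $(5)$ computation for these two whiskerings with representatives $G(\tilde{\delta})$ and $\tilde{\varepsilon}_{F'}$. So the sign tracking you call the main obstacle was already done there. The factorization also isolates the role of fullness of $H^*(G)$: it is exactly what upgrades $G(\langle b,a\rangle_{\delta})\subseteq\langle b,a\rangle_{G(\delta)}$ to an equality.

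In exchange, your route is self-contained and gives explicit $\hat{x}$, $\hat{y}$ and the correcting coboundary. Your indeterminacy arguments go straight to $M_{\varepsilon*\delta}(b,a)$ rather than through $M_{G(\delta)}(b,a)$, and are equivalent to the paper's. For the second summand you use invertibility of $G(\delta_A)$ and $G(\delta_B)$; for the first, fullness together with invertibility of $\varepsilon$.
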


\begin{proof}
    Write $a=[\alpha]$ and $b=[\beta]$, and fix representatives $\tilde{\delta}_X$ of $\delta_X$ and $\tilde{\varepsilon}_X$ of $\varepsilon_X$ for every object $X$ in $\mathscr{A}$, and $s$ such that $d(s)=\beta\alpha$. Notice that 
    $$\varepsilon*\delta=\varepsilon_{F'}\circ G(\delta),$$
    where $G(\delta)\colon H^*(GF)\Rightarrow H^*(GF')$ and $\varepsilon_{F'}\colon H^*(GF')\Rightarrow H^*(G'F')$ are the graded natural transformations given by $G(\delta)_X=G(\delta_X)$ and $(\varepsilon_{F'})_X=\varepsilon_{F'(X)}$, with the obvious representatives. By part $(5)$ of Proposition \ref{product_props}, 
    $$ \varepsilon_{F'(C)}\circ\langle b,a\rangle_{G(\delta)} + (-1)^{|\delta| (|a|+|b|+1)} \langle b,a\rangle_{\varepsilon_{F'}}\circ G(\delta_A) \subseteq \langle b,a\rangle_{\varepsilon*\delta}. $$
    The desired inclusion then follows if $G(\langle b,a\rangle_{\delta})\subseteq \langle b,a\rangle_{G(\delta)}$ and $\langle F'(b),F'(a)\rangle_{\varepsilon}\subseteq \langle b,a\rangle_{\varepsilon_{F'}}$. Let us prove these claims. For the first one, let $x,y$ be such that $d(x)=D_{\tilde{\delta}}(\alpha)$ and $d(y)=D_{\tilde{\delta}}(\beta)$. Since $G$ is a graded functor,
    $$ G\bigl(m_{\tilde{\delta}}(\alpha,\beta,s,x,y)\bigr)=m_{G(\tilde{\delta})}(\alpha,\beta,s,G(x),G(y)), $$
    and, since it commutes with the differential, $d(G(x))=D_{G(\tilde{\delta})}(\alpha)$ and $d(G(y))=D_{G(\tilde{\delta})}(\beta)$. By Proposition \ref{all_elements_come_from_choices}, this shows that $G(\langle b,a\rangle_{\delta})\subseteq\langle b,a\rangle_{G(\delta)}$. For the second claim, notice that $D_{\tilde{\varepsilon}_{F'}}(\alpha)=D_{\tilde{\varepsilon}}(F'(\alpha))$, and similarly for $\beta$ and $s$. Hence, if $x',y'$ are such that $d(x')=D_{\tilde{\varepsilon}}(F'(\alpha))$ and $d(y')=D_{\tilde{\varepsilon}}(F'(\beta))$, then
    $$ m_{\tilde{\varepsilon}_{F'}}(\alpha,\beta,s,x',y')=m_{\tilde{\varepsilon}}(F'(\alpha),F'(\beta),F'(s),x',y'). $$
    By Proposition \ref{all_elements_come_from_choices} applied with $F'(s)$, every element of $\langle F'(b),F'(a)\rangle_{\varepsilon}$ is the class of the right hand side for some such $x',y'$, so it lies in $\langle b,a\rangle_{\varepsilon_{F'}}$. 

    The inclusion $\langle F'(b),F'(a)\rangle_{\varepsilon}\subseteq\langle b,a\rangle_{\varepsilon_{F'}}$ is always an equality, since both are cosets of the same subgroup. If $\delta$ is a natural isomorphism, so is $G(\delta)$, and part $(5)$ of Proposition \ref{product_props} says that the left hand side of the inclusion above equals $\langle b,a\rangle_{\varepsilon*\delta}$. Replacing $\langle b,a\rangle_{G(\delta)}$ by $G(\langle b,a\rangle_{\delta})$ does not change the sum: the difference lies in $\varepsilon_{F'(C)}\circ M_{G(\delta)}(b,a)\subseteq M_{\varepsilon*\delta}(b,a)$ by the naturality of $\varepsilon$, and, as in the proof of part $(5)$, the second summand is a coset of $M_{\varepsilon*\delta}(b,a)$. This gives the equality.

    If moreover $\varepsilon$ is a natural isomorphism and $H^*(G)$ is full, then $G(M_{\delta}(b,a))=M_{G(\delta)}(b,a)$, so $G(\langle b,a\rangle_{\delta})=\langle b,a\rangle_{G(\delta)}$, as an inclusion between cosets of the same subgroup. Hence the left hand side of the statement is that of part $(5)$ of Proposition \ref{product_props} for the natural isomorphisms $G(\delta)$ and $\varepsilon_{F'}$, and the equation holds in the quotient group by $M_{\varepsilon*\delta}(b,a)$.
\end{proof}

\section{On a theorem by Heller} \label{sec:heller}

The following setup follows Section 8 of \cite{amiot_2007_structure_triangulated_categories}. Let $\mathscr{T}$ be an additive category with split idempotents admitting a pretriangulated structure. We consider the abelianization of $\mathscr{T}$, i.e.\ the category $\Mod\mathscr{T}$ of finitely presented functors $F\colon \mathscr{T}^{op}\to \mathbf{Ab}$. Recall that, since $\mathscr{T}$ admits a pretriangulated structure, the category $\Mod\mathscr{T}$ is a Frobenius abelian category whose projective-injectives are the representable functors. This way, we can identify $\mathscr{T}$ with the full subcategory of $\Mod\mathscr{T}$ consisting of the projective objects via the Yoneda embedding:
$$\begin{array}{rcccl}
\mathscr{T} & \overset{\sim}{\longrightarrow} & \proj\mathscr{T} & \hookrightarrow & \Mod\mathscr{T}\\
X & \mapsto & \mathscr{T}(-,X) & = & \mathscr{T}(-,X) 
\end{array}$$

A complex over $\mathscr{T}$ is \emph{acyclic} if it becomes exact under this embedding. We further consider the category of acyclic complexes over $\Mod\mathscr{T}$ and its full subcategory of complexes with projective components
$$\Ac(\mathscr{T}) \overset{\sim}{\longrightarrow} \Ac(\proj\mathscr{T}) \hookrightarrow \Ac(\Mod\mathscr{T}).$$
The category $\Ac(\mathscr{T})$ is Frobenius and its projective-injectives are the contractible complexes. The 0-cocycles functor $Z^0\colon \Ac(\mathscr{T})\to \Mod\mathscr{T}$ sends projectives into projectives and induces an equivalence
$$Z^0\colon \underline{\Ac}(\mathscr{T})\to \underline{\Mod}(\mathscr{T})$$
which is a triangulated functor. Notice that $\underline{\Ac}(\mathscr{T})$ is both the stable category of $\Ac(\mathscr{T})$ and the homotopy category of acyclic complexes over $\mathscr{T}$. It has a natural triangulated structure, whose suspension functor is the usual shift $[1]$, defined on objects by $(X[1])^n := X^{n+1}$, $d^n_{X^{\bullet}[1]}:= -d^{n+1}_{X^{\bullet}}$.

Recall that a \emph{triangulated functor} $(F,\phi)\colon \mathscr{T}\to \mathscr{T}'$ between triangulated categories $\mathscr{T}$ and $\mathscr{T}'$ with suspension functors $\Sigma$ and $\Sigma'$ is a pair where $F\colon \mathscr{T}\to \mathscr{T}'$ is a functor and $\phi\colon F\circ \Sigma \cong \Sigma'\circ F$ is a natural isomorphism, such that $(F,\phi)$ takes distinguished triangles to distinguished triangles. In addition, a \emph{triangulated natural transformation} $\alpha\colon (F,\phi)\Rightarrow (F',\phi')$ between triangulated functors is a natural transformation $\alpha\colon F\Rightarrow F'$ such that the following square of natural transformations commutes:
$$\xymatrix{F \Sigma \ar@{=>}[r]^{\alpha \Sigma}\ar@{=>}[d]_{\phi} & F'\Sigma \ar@{=>}[d]^{\phi'}\\
\Sigma' F \ar@{=>}[r]_{\Sigma'\alpha} & \Sigma' F'} $$
One can check that the pair $([1],-\id)\colon \underline{\Ac}(\mathscr{T})\to \underline{\Ac}(\mathscr{T})$ is a triangulated functor. Now, let $\Sigma\colon \mathscr{T}\to \mathscr{T}$ be an autoequivalence and consider the induced functor $\Sigma\colon \underline{\Ac}(\mathscr{T})\to \underline{\Ac}(\mathscr{T})$ defined componentwise. Clearly, $(\Sigma,\id)\colon \underline{\Ac}(\mathscr{T})\to \underline{\Ac}(\mathscr{T})$ is also a triangulated functor.

A complex $X^{\bullet}$ over $\mathscr{T}$ is \emph{$3$-$\Sigma$-periodic} if $X^{n+3}=\Sigma X^n$ and $d^{n+3}_{X}=\Sigma d^n_{X}$ for every $n\in \mathbb{Z}$. A morphism $\kappa$ between $3$-$\Sigma$-periodic complexes is \emph{$\Sigma$-periodic} if $\kappa^{n+3}=\Sigma\kappa^n$ for every $n\in \mathbb{Z}$. A triangle
$$X\overset{\alpha}{\to} Y\overset{\beta}{\to} Z\overset{\gamma}{\to} \Sigma X$$
whose consecutive composites vanish gives rise to the $3$-$\Sigma$-periodic complex
$$\dots \to \Sigma^{-1}Z \overset{\Sigma^{-1} \gamma}{\longrightarrow} X \overset{\alpha}{\longrightarrow} Y \overset{\beta}{\longrightarrow} Z \overset{\gamma}{\longrightarrow} \Sigma X \overset{\Sigma \alpha}{\longrightarrow} \Sigma Y \to \dots$$
with $X$, $Y$ and $Z$ in degrees $-1$, $0$ and $1$. If the triangle is distinguished, this complex is acyclic, since the Yoneda embedding sends distinguished triangles to exact sequences.

We now state \cite[Thm.~16.4]{heller1968stable} in terms of $\underline{\Ac}(\mathscr{T})$ instead of the equivalent triangulated category $\underline{\Mod}(\mathscr{T})$.

\begin{theorem}[{\cite[Thm.~16.4]{heller1968stable}}] \label{heller_version}
    There is a bijection between the set of pretriangulated structures on $\mathscr{T}$ with suspension functor $\Sigma$ and the set of triangulated natural isomorphisms $(\Sigma,\id)\cong ([3],-\id)$ on $\underline{\Ac}(\mathscr{T})$. Moreover, the distinguished triangles in the pretriangulated structure associated to a natural isomorphism $\delta$ correspond to the $3$-$\Sigma$-periodic complexes $X^{\bullet}$ such that the morphism of complexes $\Sigma X^{\bullet}\to X^{\bullet}[3]$ given in degree $n$ by
    $$\tilde{\delta}_{X^{\bullet}}^n =(-1)^n \id_{\Sigma X^n}$$
    is a representative for $\delta_{X^{\bullet}}$.
\end{theorem}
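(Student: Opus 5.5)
This statement is a reformulation of Heller's theorem \cite[Thm.~16.4]{heller1968stable}. I would transport it along the triangulated equivalence $Z^0\colon \underline{\Ac}(\mathscr{T})\to\underline{\Mod}(\mathscr{T})$ rather than reprove it. The plan has three steps: (i) identify the functors on both sides under $Z^0$; (ii) show that triangulated natural isomorphisms correspond bijectively; (iii) translate Heller's description of the distinguished triangles into the stated condition on $\tilde{\delta}^n_{X^\bullet}$.

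For step (i), $Z^0$ intertwines the componentwise $\Sigma$ on $\underline{\Ac}(\mathscr{T})$ with the exact extension of $\Sigma$ to $\underline{\Mod}(\mathscr{T})$. This holds because $\Sigma$ is an autoequivalence, so it preserves kernels and the projectives $\proj\mathscr{T}$. The isomorphism $Z^0\circ\Sigma\cong\Sigma\circ Z^0$ is the identity, matching the structure $(\Sigma,\id)$. Likewise $Z^0\circ[1]\cong\Omega^{-1}\circ Z^0$ canonically: for an acyclic complex $X$, the sequence $Z^0X\hookrightarrow X^0\twoheadrightarrow Z^1X$ exhibits $Z^1X=Z^0(X[1])$ as a cosyzygy of $Z^0X$. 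One then checks that under this identification the structure $-\id$ on $[1]$ corresponds to the canonical structure on $\Omega^{-1}$, with the sign coming from the differential of $X[1]$. Iterating gives $([3],-\id)\leftrightarrow\Omega^{-3}$ together with its structure.

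For step (ii), an equivalence of triangulated categories induces a bijection between triangulated natural isomorphisms of the corresponding triangulated functors, by whiskering with $Z^0$ and a quasi-inverse. Combined with step (i), Heller's bijection becomes the first assertion.

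Step (iii) is the main obstacle. Heller's triangles are those $X\to Y\to Z\to\Sigma X$ for which the natural isomorphism, evaluated on $Z^0$ of the associated $3$-$\Sigma$-periodic complex $X^\bullet$, agrees with the isomorphism $\Sigma Z^0X^\bullet\cong\Omega^{-3}Z^0X^\bullet$. That second isomorphism is obtained by reading $X\to Y\to Z\to\Sigma X$ as a projective resolution. I would compute $Z^0$ of the chain map $\Sigma X^\bullet\to X^\bullet[3]$ with components $(-1)^n\id$. In degree $0$ it is the identity on $Z^0$, identified through step (i) with the comparison map to $\Omega^{-3}$. The sign $(-1)^n$ is forced for two reasons. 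First, $X^\bullet[3]$ has differential $-d$, so only alternating signs give a chain map. Second, its value in degree $0$ must match Heller's normalization, possibly up to the sign convention $-\id$ in $([3],-\id)$. The difficulty is purely bookkeeping of signs. I would fix them by comparing, on one split triangle (for instance $X\xrightarrow{\id}X\to0\to\Sigma X$), which sign Heller's isomorphism takes, and then propagate by naturality.
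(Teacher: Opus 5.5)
Your route is sound in outline but differs from the paper's.

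\textbf{Comparison of the two routes.} You transport Heller's bijection wholesale along the triangulated equivalence $Z^0$, so bijectivity becomes formal. You then read Heller's criterion through the faithful functor $Z^0$: a candidate triangle is distinguished if and only if the isomorphism $\Sigma M\cong\Omega^{-3}M$ it induces (where $M=Z^0(X^\bullet)$) equals $\Theta_M$. This gives both directions of the characterization of distinguished triangles at once.

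The paper instead constructs $\delta_{X^\bullet}$ directly in $\underline{\Ac}(\mathscr{T})$. It completes $d^{-1}_{X}$ to a distinguished triangle $T(X^\bullet)$. It then chooses homotopy equivalences $p\colon T(X^\bullet)\to X^\bullet$ and $q\colon\Sigma X^\bullet[-3]\to T(X^\bullet)$ inducing the identity on $Z^0$ and on $Z^3$, and sets $\delta_{X^\bullet}=(p\circ q)[3]$. Bijectivity is delegated to Heller by asserting that this agrees with his construction. For a complex coming from a distinguished triangle it takes $T(X^\bullet)=X^\bullet$ and $p=\id$, and $q[3]$ is then forced to be $(-1)^n\id$.

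So the paper's normalization (identity on $Z^0$ and $Z^3$) pins down the sign by construction, with the converse direction inherited from Heller. Your approach gets the bijection for free but has to match sign conventions across $Z^0$.

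\textbf{A concrete flaw in the sign check.} The test you propose for fixing the sign is vacuous. The complex attached to $X\xrightarrow{\id}X\to 0\to\Sigma X$, or to any split triangle, is a direct sum of shifted two-term identity complexes. It is therefore contractible, hence zero in $\underline{\Ac}(\mathscr{T})$. Both $(-1)^n\id$ and $-(-1)^n\id$ represent $\delta_{X^\bullet}=0$ there, and naturality has nothing to propagate. Nor can the structure $-\id$ on $[3]$ decide the sign, since $\delta$ is triangulated if and only if $-\delta$ is.

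Instead, finish the uniform computation you already began. Compare two identifications of $Z^3(X^\bullet)=Z^0(X^\bullet[3])$ with $\Omega^{-3}Z^0(X^\bullet)$:
\begin{itemize}
\item the one obtained by iterating step (i), which uses the maps $(-1)^kd^k$ coming from the differentials of $X^\bullet[k]$;
\item the one given by the exact sequence $0\to Z^0(X^\bullet)\to X^0\to X^1\to X^2\to Z^3(X^\bullet)\to 0$, through which Heller reads off the isomorphism from the triangle.
\end{itemize}
The discrepancy is a single sign independent of $X^\bullet$. If it is $-1$, replace the transported $\delta$ by $-\delta$. Negation is a bijection of the set of triangulated natural isomorphisms $(\Sigma,\id)\cong([3],-\id)$, so you still obtain a bijection, now with the stated normalization.
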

\begin{proof}
    Let $\Delta$ be a pretriangulated structure on $\mathscr{T}$ with suspension functor $\Sigma$. We define the triangulated natural isomorphism $\delta=\delta(\Delta)\colon (\Sigma,\id)\cong ([3],-\id)$ as follows.
    Given a complex $X^{\bullet}$, since $\Delta$ is a pretriangulated structure, we can call $\alpha= d_{X^{\bullet}}^{-1}$ (the differential of $X^{\bullet}$ at degree $-1$) and complete  it into a distinguished triangle
    $$X^{-1} \overset{\alpha}{\to} X^0 \overset{\beta}{\to} Y \overset{\gamma}{\to} \Sigma X^{-1} $$
    which gives rise to a $3$-$\Sigma$-periodic acyclic complex $T(X^{\bullet})$.
    Notice that for every complex $X^{\bullet}$, $Z^0(X^{\bullet})=Z^0(T(X^{\bullet}))$ and so there is a homotopy equivalence $p\colon T(X^{\bullet})\to X^{\bullet}$ which induces the identity on $Z^0$. Similarly, since the functor $Z^3 = Z^0\circ [3]$ is also an equivalence, there is a homotopy equivalence $q\colon \Sigma X^{\bullet}[-3]\to T(X^{\bullet})$ which induces the identity on $Z^3(\Sigma X^{\bullet}[-3]) = Z^0(\Sigma X^{\bullet}) = Z^3(T(X^{\bullet}))$. Finally, we define
    $$\delta_{X^{\bullet}} :=  (p\circ q)[3] \in \Hom_{\underline{\Ac}(\mathscr{T})}(\Sigma X^{\bullet},X^{\bullet}[3]).$$
    The obtained natural isomorphism does not depend on the choice of triangle and so it only depends on the pretriangulated structure $\Delta$. This construction is equivalent to the one given in \cite{heller1968stable} and so it gives a bijection. In particular, the distinguished triangles in $\Delta$ are detected by $\delta$.

If $X^{\bullet}$ is a $3$-$\Sigma$-periodic complex which comes from a distinguished triangle,  then we can choose $T(X^{\bullet})$ to be $X^{\bullet}$. The following diagram shows a choice of representatives for $q[3]$ and $p[3]$ in this case:
$$  \xymatrix@C=1.4pc{ \Sigma X^{\bullet} \ar[d]^{q[3]} & \dots \ar[r] & X \ar[d]_{\id} \ar[r]^{\alpha} & Y \ar[d]_{-\id} \ar[r]^{\beta} & Z \ar[d]_{\id} \ar[r]^{\gamma} & \Sigma X \ar[d]_{-\id} \ar[r]^{\Sigma \alpha} & \Sigma Y \ar[r] \ar[d]_{\id} & \dots \\
     T(X^{\bullet})[3] \ar[d]^{p[3]} & \dots \ar[r] & X \ar@{=}[d] \ar[r]^{-\alpha} & Y \ar@{=}[d] \ar[r]^{-\beta} & Z \ar@{=}[d] \ar[r]^{-\gamma} & \Sigma X \ar@{=}[d] \ar[r]^{-\Sigma \alpha} & \Sigma Y \ar[r] \ar@{=}[d] & \dots \\
     X^{\bullet}[3] & \dots \ar[r] & X \ar[r]^{-\alpha} & Y \ar[r]^{-\beta} & Z \ar[r]^{-\gamma} & \Sigma X \ar[r]^{-\Sigma \alpha} & \Sigma Y \ar[r] & \dots  }$$
    With these choices, it is clear that $\delta_{X^{\bullet}}$ is of the stated form.
    \end{proof}

\begin{definition} \label{exact_comp}
    In the context of the theorem above, we say that an acyclic complex $X^{\bullet}$ is an \emph{exact triangle} with respect to the natural isomorphism $\delta$ if it is $3$-$\Sigma$-periodic and the morphism of complexes $\Sigma X^{\bullet}\to X^{\bullet}[3]$ given in degree $n$ by
     $$\tilde{\delta}_{X^{\bullet}}^n =(-1)^n \id_{\Sigma X^{n}}$$
     is a representative for $\delta_{X^{\bullet}}$.
\end{definition}

The second part of Theorem \ref{heller_version} states that, if a pretriangulated structure $\Delta$ is associated to a natural isomorphism $\delta$, then the distinguished triangles in $\Delta$ correspond to the $3$-$\Sigma$-periodic complexes which are exact triangles with respect to $\delta$ in the sense of the definition above. 

\begin{remark} \label{delta_shift}
    Recall that $\delta$ being a natural isomorphism of triangulated functors means that, for every acyclic complex $X^{\bullet}$,
$$ \delta_{X^{\bullet}}[1] = - \delta_{X^{\bullet}[1]}.$$
    In particular, if $X^{\bullet}$ is an exact triangle, then $X^{\bullet}[1]$ is also an exact triangle since, if $\tilde{\delta}_{X^{\bullet}}$ is a representative of $\delta_{X^{\bullet}}$ as in the definition above, then a representative for $\delta_{X^{\bullet}[1]}$ is given by
 $$\tilde{\delta}_{X^{\bullet}[1]}^n = - (\tilde{\delta}_{X^{\bullet}}[1])^n = - \tilde{\delta}_{X^{\bullet}}^{n+1} = - (-1)^{n+1} \id_{\Sigma X^{n+1}} = (-1)^n \id_{(\Sigma X^{\bullet}[1])^{n}}. $$
This fact is the translation of Verdier's axiom (TR2).
\end{remark}

\section{The octahedral axiom} \label{sec:octahedral}

We will now describe a particular class of instances of the naturality defect product in the context of pretriangulated categories. For that purpose, we describe an enhancement of the category $\underline{\Ac}(\mathscr{T})$. Let $\mathscr{T}$ be a pretriangulated category with split idempotents and suspension functor $\Sigma$. Recall the following definition:

\begin{definition}
    The \emph{dg category of cochain complexes} over $\mathscr{T}$, $\Ch(\mathscr{T})_{\mathrm{dg}}$, is defined to have as objects the cochain complexes over $\mathscr{T}$, as degree $i$ morphisms
    $$\hom^{i}(X^{\bullet},Y^{\bullet}) = \{ (f^j)_{j\in \mathbb{Z}} \; | \; f^j\in \mathscr{T}(X^j,Y^{j+i}) \}  $$
    and differential $\partial \colon \hom^{i}(X^{\bullet},Y^{\bullet}) \to \hom^{i+1}(X^{\bullet},Y^{\bullet}) $ given by
    $$\partial(f) := d_{Y^{\bullet}}\circ f - (-1)^{i} f\circ d_{X^{\bullet}}. $$
\end{definition}

The shift $[1]$ and the suspension $\Sigma$ extend to dg functors $\Ch(\mathscr{T})_{\mathrm{dg}}\to \Ch(\mathscr{T})_{\mathrm{dg}}$. On objects, $[1]$ is defined as in the previous section. On morphisms, it is given by
$$ (f[1])^j := (-1)^{i}f^{j+1} $$
for every $f\in \hom^i(X^{\bullet},Y^{\bullet})$ and every $j\in \mathbb{Z}$. We write $[m]$ for the $m$-fold composite of $[1]$, so that $(f[3])^j = (-1)^{i}f^{j+3}$. On the other hand, $\Sigma$ is extended componentwise, with no extra sign needed.

We will work in the full subcategory of $\Ch(\mathscr{T})_{\mathrm{dg}}$ formed by the acyclic complexes, which we denote by $\Ac(\mathscr{T})_{\mathrm{dg}}$. In this way, $Z^0(\Ac(\mathscr{T})_{\mathrm{dg}})=\Ac(\mathscr{T})$ and $H^0(\Ac(\mathscr{T})_{\mathrm{dg}})=\underline{\Ac}(\mathscr{T})$. The dg functors $[1]$ and $\Sigma$ restrict to dg functors $\Ac(\mathscr{T})_{\mathrm{dg}}\to \Ac(\mathscr{T})_{\mathrm{dg}}$, which induce on $\Ac(\mathscr{T})$ and on $\underline{\Ac}(\mathscr{T})$ the shift and the suspension of Section \ref{sec:heller}. 

Recall the following fact. If $\mathscr{A}$ is a pretriangulated dg category, as our dg categories of interest are, then $H^0(\mathscr{A})$ is triangulated and $H^*(\mathscr{A})$ is determined by it: the morphisms of degree $n$ from $X$ to $Y$ are the morphisms in $H^0(\mathscr{A})$ from $X$ to the $n$-th suspension of $Y$. Similarly, for dg functors $F,G\colon \mathscr{A}\to \mathscr{B}$, the graded natural transformations $H^*(F)\Rightarrow H^*(G)$ of degree $0$ are the same as the triangulated natural transformations $H^0(F)\Rightarrow H^0(G)$.

We consider the natural isomorphism $\delta\colon H^0(\Sigma)\Rightarrow H^0([3])$ given by the pretriangulated structure of $\mathscr{T}$ under the bijection in Theorem \ref{heller_version}. By the above, $\delta$ is a graded natural transformation $H^*(\Sigma)\Rightarrow H^*([3])$ of degree $0$, as required in Definition \ref{delta-product}.

Let $\kappa\colon A^{\bullet}\to B^{\bullet}$ be a morphism in $\Ac(\mathscr{T})$. Its \emph{mapping cone} is the complex $C^{\bullet}(\kappa)$ with $C^{n}(\kappa)=A^{n+1}\oplus B^{n}$ and differential
$$d_{C^{\bullet}(\kappa)} = \begin{pmatrix} -d_{A^{\bullet}} & 0 \\ \kappa & d_{B^{\bullet}} \end{pmatrix}.$$
We denote by $\iota\colon B^{\bullet}\to C^{\bullet}(\kappa)$ the inclusion in the cone and by $i$ its cohomology class. If $k\in \underline{\Ac}(\mathscr{T})(A^{\bullet},B^{\bullet})$ is the class of $\kappa$, then it is well known that the composition
$$A^\bullet \overset{k}{\longrightarrow} B^{\bullet} \overset{i}{\longrightarrow} C^{\bullet}(\kappa) $$
vanishes in $\underline{\Ac}(\mathscr{T})$. Therefore, it makes sense to define the naturality defect product $\langle i,k\rangle_{\delta}$ as in Definition \ref{delta-product}, which is an element of the quotient $$\frac{H^{-1}(\Ac(\mathscr{T})_{\mathrm{dg}})(\Sigma A^{\bullet},C^{\bullet}(\kappa)[3])}{i[3]\circ H^{-1}(\Ac(\mathscr{T})_{\mathrm{dg}})(\Sigma A^{\bullet},B^{\bullet}[3]) + H^{-1}(\Ac(\mathscr{T})_{\mathrm{dg}})(\Sigma B^{\bullet},C^{\bullet}(\kappa)[3]) \circ \Sigma k }.$$

\begin{proposition} \label{operation_to_delta}
    Let $A^{\bullet}$ and $B^{\bullet}$ be acyclic complexes, with fixed representatives $\tilde{\delta}_{A^{\bullet}}$ and $\tilde{\delta}_{B^{\bullet}}$ of $\delta$, and take $\tilde{\delta}_{A^{\bullet}[1]}:=-\tilde{\delta}_{A^{\bullet}}[1]$ as representative of $\delta_{A^{\bullet}[1]}$ (see Remark \ref{delta_shift}). Let $k=[\kappa]\in \underline{\Ac}(\mathscr{T})(A^{\bullet},B^{\bullet})$ and suppose that the composition
    $$A^\bullet \overset{k}{\longrightarrow} B^{\bullet} \overset{i}{\longrightarrow} C^{\bullet}(\kappa)$$
    satisfies $\langle i,k\rangle_{\delta}=0$. Then, there is some $x\in \hom^{-1}(\Sigma A^{\bullet},B^{\bullet}[3])$ satisfying $D_{\tilde{\delta}}(\kappa) = \partial(x)$ and such that the morphism of complexes $\Sigma C^{\bullet}(\kappa)\to C^{\bullet}(\kappa)[3]$ given in degree $n$ by
    \begin{equation} \label{forma}
        \begin{pmatrix} \tilde{\delta}_{A^{\bullet}[1]}^n & 0 \\
    x^{n+1} & \tilde{\delta}_{B^{\bullet}}^n
    \end{pmatrix}
    \end{equation}
    is a representative of $\delta_{C^{\bullet}(\kappa)}$.
\end{proposition}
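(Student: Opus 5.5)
The plan is to start from an arbitrary representative $\varphi$ of $\delta_{C^{\bullet}(\kappa)}$ and modify it by a coboundary until it takes the triangular form \eqref{forma}. The vanishing of $\langle i,k\rangle_{\delta}$ is used for exactly one purpose: to kill the one component that cannot be removed by naive homotopies.

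\textbf{Step 1: triangular form from naturality.} Write $\varphi^n$ as a $2\times 2$ matrix with respect to $\Sigma C^n(\kappa)=\Sigma A^{n+1}\oplus \Sigma B^n$ and $C^{n+3}(\kappa)=A^{n+4}\oplus B^{n+3}$. Consider two morphisms of complexes: the inclusion $\iota$, and the projection $\pi\colon C^{\bullet}(\kappa)\to A^{\bullet}[1]$, which is a degree $0$ cocycle. Naturality of $\delta$ with respect to both gives homotopies $y$ and $z$ with
\[
D_{\tilde\delta}(\iota)=\varphi\,\Sigma\iota-\iota[3]\,\tilde\delta_{B^{\bullet}}=\partial(y), \qquad
D_{\tilde\delta}(\pi)=\tilde\delta_{A^{\bullet}[1]}\,\Sigma\pi-\pi[3]\,\varphi=\partial(z).
\]
I will replace $\varphi$ by $\varphi-\partial(\iota[3]\,z+y\,\Sigma p)$, where $p$ is the graded projection $C^{\bullet}(\kappa)\to B^{\bullet}$ (not a cocycle), possibly with signs to be fixed. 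The aim is to reach a representative whose right column is exactly $\binom{0}{\tilde\delta_{B^{\bullet}}}$ and whose top-left entry is exactly $\tilde\delta_{A^{\bullet}[1]}$. The correction $y\,\Sigma p$ is not a cocycle, so this step will produce an error term. I expect that error to lie in the lower-left entry and in a correction to the top-right entry involving $\kappa$. Both must then be absorbed by a second homotopy supported in the off-diagonal block $\hom^{-1}(\Sigma A^{\bullet}[1],B^{\bullet}[3])$. After this step the representative has the form $\begin{pmatrix}\tilde\delta_{A^{\bullet}[1]} & e\\ x' & \tilde\delta_{B^{\bullet}}\end{pmatrix}$, where $e$ is a leftover cocycle-type term.

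\textbf{Step 2: the cocycle condition and the defect product.} Directly from the cone differential, a matrix $\begin{pmatrix}\tilde\delta_{A^{\bullet}[1]} & 0\\ x^{n+1} & \tilde\delta_{B^{\bullet}}\end{pmatrix}$ is a morphism of complexes if and only if $\partial(x)=D_{\tilde\delta}(\kappa)$. Here I use the choice $\tilde\delta_{A^{\bullet}[1]}=-\tilde\delta_{A^{\bullet}}[1]$ to match the sign of the $-d_A$ entry. To deal with the leftover $e$, I will compute the defect product using the canonical null-homotopy $s\colon A^{\bullet}\to C^{\bullet}(\kappa)$ of $\iota\kappa$, namely the inclusion of the $A^{\bullet}[1]$-summand in degree $-1$. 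Then $D_{\tilde\delta}(s)$ involves precisely the first column of the current representative. Using the first column from Step 1, with $x'$ as the homotopy for $\kappa$ and the $y$ from Step 1, the cocycle $m_{\tilde\delta}(\kappa,\iota,s,x',y)$ should be identified, up to sign, with the class of the offending term $e$ restricted along $\Sigma A^{\bullet}$.

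\textbf{Step 3: using the vanishing.} By Proposition \ref{all_elements_come_from_choices} and $\langle i,k\rangle_{\delta}=0$, there are choices $x,y'$ for which $m_{\tilde\delta}(\kappa,\iota,s,x,y')$ is a coboundary $\partial(w)$. Changing $x'$ to $x$ and $y$ to $y'$ alters the representative by cocycles of the form $\iota[3]\,h\,\Sigma\pi$ and $h'\,\Sigma p$, which is allowed up to further coboundaries. The null-homotopy $w$ then provides the final homotopy that removes $e$. The result is a representative of the form \eqref{forma} with $\partial(x)=D_{\tilde\delta}(\kappa)$.

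The main obstacle is Step 2. One has to verify that the residual term is literally the defect cocycle, with all signs correct: the signs from $[3]$ on morphisms, the minus sign in $\tilde\delta_{A^{\bullet}[1]}$, and the sign conventions in $m_{\tilde\delta}$. One also has to check that the compensating homotopies genuinely live in the correct matrix entries. I would carry out that matrix computation degreewise first, before fixing any signs in Steps 1 and 3.
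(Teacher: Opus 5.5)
Your Step 3 has the right mechanism: use Proposition \ref{all_elements_come_from_choices} to choose $x,y$ with $m_{\tilde\delta}(\kappa,\iota,s,x,y)=\partial(b)$, then build the homotopy from $b$ and $y$. The argument around it, however, does not work as written, for two reasons.

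\textbf{The obstruction is not a leftover $e$ in the top-right entry.} Take any representative $\varphi=(\varphi_{ij})$ of $\delta_{C^\bullet(\kappa)}$. The equation $D_{\tilde\delta}(\iota)=\partial(y)$ reads $\varphi_{12}=\partial(y_1)$ and $\varphi_{22}-\tilde\delta_{B^\bullet}=\partial(y_2)-(\kappa[3])\,y_1$. So subtracting $\partial$ of $\begin{pmatrix}0&y_1\\0&y_2\end{pmatrix}$ makes the second column exactly $\binom{0}{\tilde\delta_{B^\bullet}}$. Its only side effect is to change the first column by $y\,\Sigma\kappa$. The whole discrepancy then sits in the first column, which is precisely what $D_{\tilde\delta}(s)$ sees. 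Naturality at $\pi$ is never needed. Also, $\iota[3]z$ does not typecheck, since $z$ lands in $A^\bullet[4]$.

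\textbf{The reconciliation in Step 3 is invalid.} A representative of $\delta_{C^\bullet(\kappa)}$ may only be changed by coboundaries. Adding a cocycle such as $\iota[3]h\,\Sigma\pi$ changes its homotopy class. Moreover, $h'\,\Sigma p$ is not even a cocycle, because $p$ is not one.

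\textbf{The fix, which is the paper's proof.} Drop Step 1. Compute $m$ with an arbitrary representative $\varphi$ playing the role of $\tilde\delta_{C^\bullet(\kappa)}$; this is allowed because the product does not depend on that choice. Take $s=\binom{\id}{0}$. Since $m$ contains the term $-\iota[3]x$, the equation $m=\partial(b)$ reads, in degree $n+1$,
\begin{align*}
\varphi_{11}^n-\tilde\delta_{A^\bullet[1]}^n &= y_1^{n+1}\Sigma\kappa^{n+1}+\partial(b_1)^{n+1},\\
\varphi_{21}^n-x^{n+1} &= y_2^{n+1}\Sigma\kappa^{n+1}-\kappa^{n+3}b_1^{n+1}+\partial(b_2)^{n+1}.
\end{align*}
Combine these with the two identities for $y$ above. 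Define $h$ by $h_{12}=y_1$, $h_{22}=y_2$, $h_{11}^n=b_1^{n+1}$ and $h_{21}^n=b_2^{n+1}$. Using only the expansion of $\partial(h)$ coming from the cone differential, the four identities give $\partial(h)=\varphi-\Phi_x$, where $\Phi_x$ is the matrix in the statement built from this same $x$.

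In particular, you never have to match an $x'$ coming from a preliminary normalization with the $x$ supplied by the vanishing. The $x$ in the conclusion is the vanishing choice itself; the difference is absorbed automatically by the $-\iota[3]x$ term in $m$.
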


\begin{proof} 
    Let
    $$\tilde{\delta}_{C^{\bullet}(\kappa)} = \begin{pmatrix} \tilde{\delta}_{11} & \tilde{\delta}_{12} \\
    \tilde{\delta}_{21} & \tilde{\delta}_{22}
    \end{pmatrix}\colon \Sigma A^{\bullet}[1]\oplus \Sigma B^{\bullet} \to A^{\bullet}[4]\oplus B^{\bullet}[3] $$
    be any representative of $\delta_{C^{\bullet}(\kappa)}$. Let us prove that, under our hypothesis, $\tilde{\delta}_{C^{\bullet}(\kappa)}$ is homotopic to a morphism of the form \eqref{forma}. Notice that a homotopy in this context is a morphism of degree $-1$
    $$h=\begin{pmatrix} h_{11} & h_{12} \\
    h_{21} & h_{22}
    \end{pmatrix}\in \hom^{-1}(\Sigma C^{\bullet}(\kappa),C^{\bullet}(\kappa)[3])$$ such that
    \begin{equation} \label{cone_homotopy}
        \partial(h) = \begin{pmatrix} \tilde{\delta}_{11} - \tilde{\delta}_{A^{\bullet}[1]} & \tilde{\delta}_{12} \\
        \tilde{\delta}_{21} + x[1] & \tilde{\delta}_{22} - \tilde{\delta}_{B^{\bullet}}
        \end{pmatrix},
    \end{equation}
    where the left hand side expands to
    $$
        \begin{pmatrix} \partial(h_{11})^n +h_{12}^{n+1}\Sigma\kappa^{n+1} & \partial(h_{12})^n \\
        \partial(h_{21})^n + h_{22}^{n+1}\Sigma\kappa^{n+1} - \kappa^{n+3} h_{11}^{n} & \partial(h_{22})^n -\kappa^{n+3} h_{12}^n
        \end{pmatrix}
    $$
    on the components.

    Now, suppose $\langle i,k\rangle_{\delta}$ vanishes. We keep our choices for the representatives of $\delta$ and our $\kappa$, and take
    $$\iota=\begin{pmatrix}
        0 \\ \id_{B^{\bullet}}
    \end{pmatrix} , \hspace{5mm} s=\begin{pmatrix}
        \id_{A^{\bullet}} \\ 0
    \end{pmatrix},$$
    so that $\iota\kappa = \partial(s)$. By Proposition \ref{all_elements_come_from_choices}, we can find $x$ and $y$ with
    $$D_{\tilde{\delta}}(\kappa) = \partial(x), \hspace{5mm} D_{\tilde{\delta}}(\iota) = \partial(y), $$
    such that
    \begin{equation} \label{vanishing}
        \big[\, D_{\tilde{\delta}}(s)-y(\Sigma\kappa) - (\iota[3]) x \,\big]=[0]\in H^{-1}(\Ac(\mathscr{T})_{\mathrm{dg}})(\Sigma A^{\bullet},C^{\bullet}(\kappa)[3]).
    \end{equation}
    One can check that the equation $D_{\tilde{\delta}}(\iota)=\partial(y)$ amounts to
    \begin{equation} \label{homotopy_y}
        \tilde{\delta}_{12}=\partial(y_1), \hspace{5mm} \tilde{\delta}_{22}-\tilde{\delta}_{B^{\bullet}}= \partial(y_2)-(\kappa[3]) y_1.
    \end{equation}
    Coming back to \eqref{vanishing}, we can find some $b\in \hom^{-2}(\Sigma A^{\bullet},C^{\bullet}(\kappa)[3])$ such that
    $$\begin{array}{ccl}
       0  & = & D_{\tilde{\delta}}\begin{pmatrix}
        \id_{A^{\bullet}} \\ 0
    \end{pmatrix} - \begin{pmatrix}
        y_1 \\ y_2
    \end{pmatrix}(\Sigma\kappa) - \begin{pmatrix}
        0 \\ \id_{B^{\bullet}[3]}
    \end{pmatrix} x - \partial(b) \\[4mm]
         & = & \begin{pmatrix} \tilde{\delta}_{11} & \tilde{\delta}_{12} \\
    \tilde{\delta}_{21} & \tilde{\delta}_{22}
    \end{pmatrix} \begin{pmatrix}
        \Sigma \id_{A^{\bullet}} \\ 0
    \end{pmatrix} - \begin{pmatrix}
        -\id_{A^{\bullet}[3]} \\ 0
    \end{pmatrix} \tilde{\delta}_{A^{\bullet}} \\[4mm]
         & & \qquad - \begin{pmatrix}
        y_1 \Sigma\kappa \\ y_2 \Sigma\kappa
    \end{pmatrix} - \begin{pmatrix}
        0 \\ x
    \end{pmatrix} - \begin{pmatrix}
        \partial(b_1) \\ \partial(b_2)-(\kappa[3]) b_1
    \end{pmatrix}.
    \end{array}  $$
    Evaluating at the $(n+1)$-th component, we get the equations
    \begin{equation}
        \tilde{\delta}_{11}^n+\tilde{\delta}_{A^{\bullet}}^{n+1} = y_1^{n+1}\Sigma\kappa^{n+1} + \partial(b_1)^{n+1}
    \end{equation}
    and
    \begin{equation}
        \tilde{\delta}_{21}^n-x^{n+1} = y_2^{n+1}\Sigma\kappa^{n+1} - \kappa^{n+3} b_1^{n+1} + \partial(b_2)^{n+1},
    \end{equation}
    which along with \eqref{homotopy_y} gives us a homotopy as in \eqref{cone_homotopy} taking
    $$h_{12}= y_1, \hspace{5mm} h_{22}= y_2, $$
    and defining, for every $n\in \mathbb{Z}$,
    $$h_{11}^n = b_1^{n+1}, \hspace{5mm} h_{21}^n = b_2^{n+1}.$$
\end{proof}

One of the most useful characterizations of the octahedral axiom is Neeman's axiom (TR4') in  \cite{neeman_2001_triangulated_categories}, which states that every commutative square between distinguished triangles

    $$\xymatrix{ X \ar[r]^{\alpha} \ar[d]_{\kappa^{-1}} & Y \ar[r]^{\beta} \ar[d]_{\kappa^0} & Z \ar[r]^{\gamma}  & \Sigma X \\ X' \ar[r]^{\alpha'} & Y' \ar[r]^{\beta'} & Z' \ar[r]^{\gamma'} & \Sigma X'  }$$
    can be completed into a triangle morphism $(\kappa^{-1},\kappa^0,\kappa^1)$ so that the cone triangle 
    $$\xymatrix{ Y \oplus X' \ar[rr]^{\begin{psmallmatrix}
    -\beta & 0 \\ \kappa^0 & \alpha'
\end{psmallmatrix}} & & Z\oplus Y' \ar[rr]^{\begin{psmallmatrix}
    -\gamma & 0 \\ \kappa^1 & \beta'
\end{psmallmatrix}} & & \Sigma X\oplus Z' \ar[rr]^{\begin{psmallmatrix}
    -\Sigma \alpha & 0 \\ \Sigma \kappa^{-1} & \gamma'
\end{psmallmatrix}} & & \Sigma Y \oplus \Sigma X' },$$
    is a distinguished triangle of the triangulated structure on $\mathscr{T}$. This axiom is equivalent to Verdier's axiom (TR4) by \cite[Theorem 1.8]{neeman_1991_new_axioms_triangulated} and \cite[Proposition 1.4.6]{neeman_2001_triangulated_categories}. 
    
    The above triangle can be completed into a $\Sigma$-periodic complex, which is precisely the mapping cone of the corresponding $\Sigma$-periodic morphism $\kappa$ between the complexes associated to each distinguished triangle.
    By our version of Heller's theorem, the cone triangle of $(\kappa^{-1},\kappa^0,\kappa^1)$ is distinguished if, and only if, $C^{\bullet}(\kappa)$ is an exact triangle with respect to the natural isomorphism $\delta$ associated with the pretriangulated structure of $\mathscr{T}$, in the sense of Definition \ref{exact_comp}. In this scenario, the mapping cone is always acyclic since every map between acyclic complexes is trivially a quasi-isomorphism. Furthermore, $C^{\bullet}(\kappa) $ is clearly $3$-$\Sigma$-periodic, so it is a \emph{candidate triangle}.

    Notice that not every completion of the initial square gives an exact cone; it is enough that one of them does. We therefore have freedom to choose the third component of the morphism in a good way.

\begin{proposition} \label{change_of_cone}
    Suppose $k\in \underline{\Ac}(\mathscr{T})(A^{\bullet},B^{\bullet})$ is a morphism between exact triangles with a $\Sigma$-periodic representative $\kappa= (\kappa^{-1},\kappa^0,\kappa^1) $ such that the composition
    $$A^\bullet \overset{k}{\longrightarrow} B^{\bullet} \overset{i}{\longrightarrow} C^{\bullet}(\kappa)$$
    satisfies $\langle i,k\rangle_{\delta}=0$. Then, $k$ has another $\Sigma$-periodic representative $\psi = (\kappa^{-1},\kappa^0,\psi^1) $ such that $C^{\bullet}(\psi)$ is an exact triangle.
\end{proposition}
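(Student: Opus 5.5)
The plan is to use Proposition \ref{operation_to_delta} to obtain an almost diagonal representative of $\delta_{C^{\bullet}(\kappa)}$, and then to absorb its off-diagonal part into a change of the third component of $\kappa$.

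First, since $A^{\bullet}$ and $B^{\bullet}$ are exact triangles, I fix the representatives $\tilde{\delta}^n_{A^{\bullet}}=(-1)^n\id$ and $\tilde{\delta}^n_{B^{\bullet}}=(-1)^n\id$. As in Remark \ref{delta_shift}, $\tilde{\delta}_{A^{\bullet}[1]}:=-\tilde{\delta}_{A^{\bullet}}[1]$ is then again $(-1)^n\id$. Since $\kappa$ is $\Sigma$-periodic of degree $0$, we have $(\kappa[3])^n=\kappa^{n+3}=\Sigma\kappa^n$, so
$$D_{\tilde{\delta}}(\kappa)^n=(-1)^n\Sigma\kappa^n-(-1)^n\Sigma\kappa^n=0 .$$
Proposition \ref{operation_to_delta} then gives a degree $-1$ cocycle $x\in\hom^{-1}(\Sigma A^{\bullet},B^{\bullet}[3])$ such that the matrix \eqref{forma}, namely $\begin{psmallmatrix}(-1)^n\id&0\\ x^{n+1}&(-1)^n\id\end{psmallmatrix}$, represents $\delta_{C^{\bullet}(\kappa)}$. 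Thus $C^{\bullet}(\kappa)$ fails to be an exact triangle only by the lower-left corner $x$. Moreover, $x$ is only determined up to the freedom described in Proposition \ref{all_elements_come_from_choices}: we may add to it cocycles whose classes lie in the indeterminacy, and we may change it by coboundaries.

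Second, I look for a degree $-1$ map $h\in\hom^{-1}(A^{\bullet},B^{\bullet})$ such that $\psi:=\kappa+\partial(h)$ has the same components as $\kappa$ in degrees $\equiv -1,0 \pmod 3$ and is $\Sigma$-periodic. Such a $\psi$ represents $k$. The isomorphism of complexes
$$\Phi=\begin{pmatrix}\id&0\\ \pm h[1]&\id\end{pmatrix}\colon C^{\bullet}(\kappa)\to C^{\bullet}(\psi),$$
with the sign fixed by the cone differential, identifies the two cones. Conjugating the representative above by $\Phi[3]$ and $\Sigma\Phi^{-1}$ yields a representative of $\delta_{C^{\bullet}(\psi)}$, because $\delta$ is natural. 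This representative is $\begin{psmallmatrix}(-1)^n\id&0\\ x^{n+1}\pm D_{\tilde{\delta}}(h)^{n+1}&(-1)^n\id\end{psmallmatrix}$, where $D_{\tilde{\delta}}(h)^{n}$ is $\pm(\Sigma h^{n}\mp h^{n+3})$ up to reindexing. Hence it suffices to arrange that $x\pm D_{\tilde{\delta}}(h)$ is null-homotopic. A null-homotopy $c$ of it can then be removed by the same kind of homotopy as in \eqref{cone_homotopy}, which leaves the diagonal $(-1)^n\id$, so $C^{\bullet}(\psi)$ is an exact triangle.

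The main obstacle is the choice of $h$ compatible with periodicity and with changing only the third component. I would first try to make $x$ itself $\Sigma$-periodic and concentrated in a single residue class mod $3$. To do this I would modify $x$ by coboundaries and by elements of the indeterminacy $i[3]\circ H^{-1}(\Sigma A^{\bullet},B^{\bullet}[3])+H^{-1}(\Sigma B^{\bullet},C^{\bullet}(\kappa)[3])\circ\Sigma k$, which is allowed by Proposition \ref{all_elements_come_from_choices}. I would then take $h$ to be the periodic map built from $\Sigma^{-1}x$ in the appropriate degrees, so that $h$ has the form $\xi\colon Z\to Z'$ corrections with $\xi\beta=0$ and $\gamma'\xi=0$ up to homotopy. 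One then checks that $\psi^1=\kappa^1+\xi$ still gives a morphism of triangles. The bookkeeping of which components of $\partial(h)$ survive, and showing that the correction needed in degrees $-1$ and $0$ can be pushed into homotopy using the exactness of $A^{\bullet}$ and $B^{\bullet}$, is where I expect the real work to lie.
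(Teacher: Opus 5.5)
Your first step and the reduction in your second paragraph are correct and match the paper's setup. With $\tilde{\delta}^n_{A^{\bullet}}=\tilde{\delta}^n_{B^{\bullet}}=(-1)^n\id$ one has $D_{\tilde{\delta}}(\kappa)=0$, so the $x$ from Proposition \ref{operation_to_delta} is a cocycle. Conjugating \eqref{forma} by the cone isomorphism $\begin{psmallmatrix}\id&0\\ \eta&\id\end{psmallmatrix}\colon C^{\bullet}(\kappa)\to C^{\bullet}(\psi)$, with $\psi=\kappa-\partial(\eta)$, gives a representative of $\delta_{C^{\bullet}(\psi)}$ with diagonal $(-1)^n\id$ and lower-left corner $x^{n+1}+(-1)^n(\eta^{n+4}-\Sigma\eta^{n+1})$.

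However, your argument stops exactly where the content lies, and the finishing strategy you sketch cannot work. If $h$ is $\Sigma$-periodic, i.e.\ $h^{n+3}=\Sigma h^n$, then $D_{\tilde{\delta}}(h)^n=(-1)^{n-1}(\Sigma h^n-h^{n+3})$ vanishes identically. Conjugating by a periodic $\Phi$ therefore leaves the corner equal to $x$, so no periodic change of representative can make a non-exact cone exact.

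Your proposed normalization of $x$ is also not justified. Adding to $x$ a cocycle $c$ that is not a coboundary changes the morphism represented by \eqref{forma} by the composite of $\Sigma$ of the projection $C^{\bullet}(\kappa)\to A^{\bullet}[1]$, the class of $c$, and $i[3]$. This composite need not vanish, so the new matrix need no longer represent $\delta_{C^{\bullet}(\kappa)}$. Proposition \ref{all_elements_come_from_choices} only says such changes move you within the coset $\langle i,k\rangle_{\delta}$. It does not say they preserve the particular vanishing choice you are relying on.

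The missing idea is that you can kill the corner exactly, using a \emph{non-periodic} $\eta$, with no condition on $x$ beyond its being a cocycle.
\begin{itemize}
\item Define $\eta$ by $\eta^{-1}=\eta^0=\eta^1=0$ and the recurrence $\eta^{n+4}=\Sigma\eta^{n+1}-(-1)^nx^{n+1}$. This can be solved in both directions because $\Sigma$ is fully faithful.
\item The conjugated representative is then exactly $(-1)^n\id_{\Sigma C^n(\psi)}$, so no null-homotopy is needed.
\item The zero initial values give $\psi^{-1}=\kappa^{-1}$ and $\psi^0=\kappa^0$. In fact $\psi^1=\kappa^1+x^{-1}d^1_{A^{\bullet}}$, which is the kind of correction $\xi$ you were guessing at.
\item Although $\eta$ is not periodic, substituting the recurrence gives $\partial(\eta)^{n+3}=\Sigma\partial(\eta)^n+(-1)^{n-1}\partial(x)^n$. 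This equals $\Sigma\partial(\eta)^n$ because $\partial(x)=D_{\tilde{\delta}}(\kappa)=0$, so $\psi$ is $\Sigma$-periodic.
\end{itemize}
The periodicity of $\kappa$ and the exactness of $A^{\bullet}$ and $B^{\bullet}$ enter only through $D_{\tilde{\delta}}(\kappa)=0$. They are not used in any manipulation of the indeterminacy.
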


\begin{proof}
    Observe that every other representative of $k$ is of the form $\psi=\kappa-\partial(\eta)$, for some $\eta\in \hom^{-1}(A^{\bullet},B^{\bullet}) $. The complexes $C^{\bullet}(\kappa)$ and $C^{\bullet}(\psi)$ are isomorphic by the isomorphism $\phi$ given by
    $$\phi^n = \begin{pmatrix}
        \id_{A^{n+1}} & 0 \\
        \eta^{n+1} & \id_{B^{n}}
    \end{pmatrix}.$$ 
    Since $A^{\bullet}$ and $B^{\bullet}$ are exact triangles, we can take the representatives $\tilde{\delta}_{A^{\bullet}}^n=(-1)^n\id_{\Sigma A^n}$ and $\tilde{\delta}_{B^{\bullet}}^n=(-1)^n\id_{\Sigma B^n}$, so that $\tilde{\delta}_{A^{\bullet}[1]}^n=(-1)^n\id_{\Sigma A^{n+1}}$. Let $x$ be as in the conclusion of Proposition \ref{operation_to_delta} applied to $\kappa$ and these representatives. We choose $\eta$ so that
    \begin{equation} \label{recurrence}
    \eta^{n+4} = \Sigma \eta^{n+1} - (-1)^n x^{n+1} \hspace{5mm} \text{for all } n\in \mathbb{Z},
    \end{equation}
    which we can construct inductively. Specifically, we take the initial values $\eta^i = 0$ for $i\in \{-1,0,1\}$, which gives $\kappa^{-1}=\psi^{-1}$ and $\kappa^0=\psi^0$. We will now prove that, with any choice satisfying the recurrence relation, $C^{\bullet}(\psi)$ is an exact triangle.
    By naturality of $\delta$, there is a representative $\tilde{\delta}'$ of $\delta_{C^{\bullet}(\psi)}$ such that the square
    $$\xymatrix{ \Sigma C^{\bullet}(\kappa) \ar[rrr]^{\begin{pmatrix}
        \id_{\Sigma A^{n+1}} & 0 \\
        \Sigma\eta^{n+1} & \id_{ \Sigma B^{n}}
    \end{pmatrix}} \ar[d]_{\begin{pmatrix}
        \tilde{\delta}_{A^{\bullet}[1]}^n & 0 \\
        x^{n+1} & \tilde{\delta}_{B^{\bullet}}^n
    \end{pmatrix}} & & & \Sigma C^{\bullet}(\psi) \ar[d]^{(\tilde{\delta}')^n} \\
    C^{\bullet}(\kappa)[3] \ar[rrr]_{\begin{pmatrix}
        \id_{A^{n+4}} & 0 \\
        \eta^{n+4} & \id_{B^{n+3}}
    \end{pmatrix}} & & & C^{\bullet}(\psi)[3]
    }$$
    commutes for all $n \in \mathbb{Z}$, where we get the matrix on the left as a representative for $\delta_{C^{\bullet}(\kappa)}$ by Proposition \ref{operation_to_delta}. It follows by our choice of $\eta$ that $\tilde{\delta}'$ is of the form
    $$ (\tilde{\delta}')^n =  \begin{pmatrix}
        (-1)^n \id_{\Sigma A^{n+1}} & 0 \\
        0 & (-1)^n \id_{\Sigma B^n} \end{pmatrix} = (-1)^n \id_{\Sigma C^n(\psi)} , $$
    and so $C^{\bullet}(\psi)$ is an exact triangle.

    We now use that $\kappa$ is a $\Sigma$-periodic morphism between exact triangles.
    This immediately gives the vanishing of $D_{\tilde{\delta}}(\kappa)$ and so $x$ is a cocycle. We get
    \begin{align*}
        \partial(\eta)^{n+3} & = d_{B^{\bullet}}^{n+2} \eta^{n+3} + \eta^{n+4} d_{A^{\bullet}}^{n+3} \\ 
        & = d_{B^{\bullet}}^{n+2} ( \Sigma \eta^{n} - (-1)^{n-1} x^n ) + ( \Sigma \eta^{n+1} - (-1)^n x^{n+1}) d_{A^{\bullet}}^{n+3} \\
        & = d_{\Sigma B^{\bullet}}^{n-1} \Sigma \eta^{n} + \Sigma \eta^{n+1} d_{\Sigma A^{\bullet}}^{n} + (-1)^{n-1} \big( d_{B^{\bullet}[3]}^{n-1} x^n + x^{n+1} d_{\Sigma A^{\bullet}}^n \big) \\
        & = \partial(\Sigma \eta)^n + (-1)^{n-1} \partial(x)^n \\
        & = \Sigma \partial(\eta)^n.
    \end{align*}
    It follows that $\partial(\eta)$ and thus $\psi$ are also $\Sigma$-periodic as stated.
\end{proof}

We are now ready to state and prove our main theorem.

\begin{theorem} \label{main_theorem}
    Let $\mathscr{T}$ be as above. The following are equivalent:
    \begin{enumerate}
        \item $\mathscr{T}$ is triangulated.
        \item For every pair $A^{\bullet}, B^{\bullet}$ of exact triangles and for every $k\in \underline{\Ac}(\mathscr{T})(A^{\bullet},B^{\bullet})$, the product $\langle i,k\rangle_{\delta}$ vanishes for some $\Sigma$-periodic representative of $k$, where $i$ denotes the class of the inclusion of $B^{\bullet}$ into the corresponding mapping cone.
        \item For every pair  $A^{\bullet}, B^{\bullet}$ of exact triangles and for every $k\in \underline{\Ac}(\mathscr{T})(A^{\bullet},B^{\bullet})$, the product $\langle g,k\rangle_{\delta}$ vanishes for any morphism $g$ in $\underline{\Ac}(\mathscr{T})$ such that the naturality defect product is defined.
        \item The naturality defect product $\langle g,f\rangle_{\delta}$ vanishes for every pair of morphisms $f,g$ in $\underline{\Ac}(\mathscr{T})$ such that $gf=0$.
    \end{enumerate}
\end{theorem}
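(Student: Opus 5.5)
The plan is to prove the cycle of implications $(4)\Rightarrow(3)\Rightarrow(2)\Rightarrow(1)\Rightarrow(4)$.

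The implications $(4)\Rightarrow(3)$ and $(3)\Rightarrow(2)$ are formal. Condition $(3)$ is a special case of $(4)$. For $(3)\Rightarrow(2)$, any morphism $k$ between exact triangles has a $\Sigma$-periodic representative $\kappa$. To see this, take the first two components of some representative, extend them to a morphism of distinguished triangles using (TR3), and extend periodically; this is a cocycle, and it represents $k$ because a morphism in $\underline{\Ac}(\mathscr{T})$ is determined by its effect on $Z^0$, equivalently by its components in degrees $-1,0$ up to homotopy. Then $g=i$ composes to zero with $k$, so $(3)$ gives $\langle i,k\rangle_\delta=0$.

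For $(2)\Rightarrow(1)$, I verify Neeman's (TR4'). Start from a commutative square between distinguished triangles. By Theorem \ref{heller_version} these triangles give exact triangles $A^\bullet,B^\bullet$, and the square extends to a $\Sigma$-periodic chain map $\kappa$ whose class is $k$. By $(2)$, $\langle i,k\rangle_\delta$ vanishes for some $\Sigma$-periodic representative. The point to check is that this representative may be taken with the same components $\kappa^{-1},\kappa^0$. I would argue this by noting that $k$, restricted to the degree $-1,0$ part, determines the square up to homotopy. If this fails, I would instead show that the vanishing is independent of the periodic representative, since changing $\kappa$ by a coboundary gives an isomorphic cone, as in the proof of Proposition \ref{change_of_cone}. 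Proposition \ref{change_of_cone} then produces $\psi=(\kappa^{-1},\kappa^0,\psi^1)$ with $C^\bullet(\psi)$ an exact triangle. By Heller's theorem, this says exactly that the cone triangle is distinguished, which is (TR4').

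The implication $(1)\Rightarrow(4)$ is the main obstacle. Given $gf=0$ in $\underline{\Ac}(\mathscr{T})$, I would reduce to the special situation of $(2)$ using Proposition \ref{product_props}.

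First, every object of $\underline{\Ac}(\mathscr{T})$ is isomorphic to an exact triangle. Indeed, $T(X^\bullet)\simeq X^\bullet$ as in the proof of Theorem \ref{heller_version}. Parts (3) and (4) of Proposition \ref{product_props}, in their equality forms for invertible maps, let me replace $A,B,C$ by exact triangles without changing whether the product vanishes.

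Second, $g\colon B\to C$ with $gf=0$ factors through the cone $C^\bullet(\varphi)$ of a representative $\varphi$ of $f$, as $g=h\circ i$. Part (3) gives
\[
(-1)^{|h|}\,[3](h)\circ\langle i,f\rangle_\delta \subseteq \langle g,f\rangle_\delta,
\]
so it suffices to show $\langle i,f\rangle_\delta$ vanishes. Since $\mathscr{T}$ is triangulated, (TR4') provides a $\Sigma$-periodic representative $\psi$ of $f$ with $C^\bullet(\psi)$ an exact triangle. With the representatives $\tilde\delta^n=(-1)^n\id$ on $A^\bullet$, $B^\bullet$ and $C^\bullet(\psi)$, we have $D_{\tilde\delta}(\psi)=0$ and $D_{\tilde\delta}(\iota)=0$, so $x=y=0$ are valid choices. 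For $s=\binom{\id}{0}$, a direct computation should give $D_{\tilde\delta}(s)=0$, using the sign $-\id$ coming from $\tilde\delta_{A[1]}$ and from $s$ having degree $-1$. Hence $m_{\tilde\delta}=0$.

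Finally, the degree-shifted cases of $(4)$ are handled by composing with shifts and applying Remark \ref{delta_shift}. The delicate points are this sign check and the correct handling of shifts of degrees in $H^*$.
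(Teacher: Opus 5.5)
Your proof is correct and uses the same ingredients as the paper: factoring $g$ through the cone and applying part $(3)$ of Proposition \ref{product_props}, reducing to exact triangles via parts $(3)$ and $(4)$, choosing $x=y=0$ so that $D_{\tilde{\delta}}(s)=0$, and invoking Proposition \ref{change_of_cone} for Neeman's axiom; you arrange them as a cycle, with your $(1)\Rightarrow(4)$ bundling the paper's $(1)\Rightarrow(2)\Rightarrow(3)\Rightarrow(4)$. In $(2)\Rightarrow(1)$, your first suggestion does not give vanishing for the specific representative extending the square, but your fallback does, and it is a valid alternative to the paper's argument (which derives independence of the representative from the factorization step): for representatives $\kappa,\kappa'$ of $k$, the cone isomorphism $\phi$ from the proof of Proposition \ref{change_of_cone} satisfies $\phi\iota_{\kappa}=\iota_{\kappa'}$, so the equality case of part $(3)$ transfers vanishing.
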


\begin{proof}
    We start by proving that $(2)$ implies the more general $(3)$. We recall that a $\Sigma$-periodic representative $\kappa$ for $k$ always exists since $\mathscr{T}$ is pretriangulated. Moreover, using the canonical triangulated structure of $\underline{\Ac}(\mathscr{T})$, for any $g$ such that $gk=0$ we obtain a factorization
    $$\xymatrix{ A^{\bullet} \ar[r]^k \ar[dr]_0 & B^{\bullet} \ar[r]^i \ar[d]^g & C^{\bullet}(\kappa) \ar@{.>}[dl]^h \\ & D^{\bullet} &  }.$$
    By hypothesis, $0\in \langle i,k\rangle_{\delta} $, and so, using part $(3)$ of Proposition \ref{product_props},
    $$0\in h[3] \circ \langle i,k\rangle_{\delta} \subseteq \langle g,k\rangle_{\delta}.$$

    Note that, as a consequence of the last argument, the vanishing of $\langle i,k\rangle_{\delta}$ is actually independent of the choice of representative $\kappa$ of $k$.

    To prove that $(3)$ implies the more general $(4)$, we recall that every acyclic complex is homotopy equivalent to an exact triangle by (TR1). Therefore, we see that the hypothesis of $A^{\bullet}$ and $B^{\bullet}$ being exact triangles is superfluous by parts $(3)$ and $(4)$ of Proposition \ref{product_props} respectively.

    Finally, we prove $(1)\Leftrightarrow (2)$ using Neeman's characterization and Theorem \ref{heller_version}.
    For the `only if' part, notice that the octahedral axiom implies that there is a $\Sigma$-periodic representative $\kappa$ of $k$ such that $C^{\bullet}(\kappa)$ is an exact triangle. Periodicity of the representatives $\kappa$ and $\iota$ and the fact that all $A^{\bullet}, B^{\bullet}$ and $C^{\bullet}(\kappa)$ are exact triangles imply that we can take $x=0$ and $y=0$ when defining $\langle i,k\rangle_{\delta}$ as in \eqref{vanishing}. One can check that, taking $s$ as in the proof of Proposition \ref{operation_to_delta}, $D_{\tilde{\delta}}(s)=0$ and so $m_{\tilde{\delta}}(\kappa,\iota,s,0,0)$ is zero.

    For the `if' part, consider a square
    $$\xymatrix{ X \ar[r]^{\alpha} \ar[d]_{\kappa^{-1} } & Y \ar[r]^{\beta} \ar[d]_{\kappa^0} & Z \ar[r]^{\gamma}  & \Sigma X \\ X' \ar[r]^{\alpha'} & Y' \ar[r]^{\beta'} & Z' \ar[r]^{\gamma'} & \Sigma X'  }$$
    between distinguished triangles. By (TR3), there is a completion into a triangle morphism $(\kappa^{-1},\kappa^0,\kappa^1)$. Let $\kappa$ be $(\kappa^{-1},\kappa^0,\kappa^1)$ seen as a $\Sigma$-periodic morphism between the associated complexes and $k$ its cohomology class. Since the vanishing of $\langle i,k\rangle_{\delta}$ is independent of the choice of representative, it must vanish for this $\kappa$.
    Then, Proposition \ref{change_of_cone} gives us another periodic complex morphism $\psi = (\kappa^{-1},\kappa^0,\psi^1)$ with exact cone, which means that the cone triangle of the completion $(\kappa^{-1},\kappa^0,\psi^1)$ in the original diagram is a distinguished triangle. This proves Neeman's axiom.
\end{proof}

Accordingly, we call a non-vanishing product $\langle g,f\rangle_{\delta}$ as in the theorem above a \emph{local obstruction to} (TR4).

\section{Examples} \label{sec:examples_section}

In this section, we will give some examples of non-vanishing obstructions, following constructions in the recent literature. The first part will be devoted to the examples given in \cite{Che26v2}, while the second part will focus on the explicit example in \cite{Name_Name_2026}, giving an obstruction which does not arise as in the first part; see Remark \ref{not_gemstone}. The second part yields a proof by hand that the example of \cite{Name_Name_2026} is not triangulated, as opposed to the original proof, which is computer-assisted.

\subsection{Gemstone-generated obstructions}

We start by adapting a definition from \cite{Che26v2} to the case we are interested in. Let $\Bbbk$ be a field and $\mathscr{T}$ a Hom-finite $\Bbbk$-linear triangulated category with split idempotents.

\begin{definition}[{\cite[Def.~5.3]{Che26v2}}]\label{def:gemstone}
An indecomposable object $X\in\underline{\Mod}(\mathscr{T})$ is a \emph{gemstone} if:
\begin{enumerate}
    \item[(G1)] there is an isomorphism $\gamma\colon X\xrightarrow{\sim}\Omega^{-2}X$, and $\underline{\Hom}_{\mathscr{T}}(X,\Omega^{-1}X)=0$;
    \item[(G2)] there is an almost-vanishing morphism $\alpha\colon X\to X$ \cite[\S 5]{Che26v2} which is not an isomorphism, lies in the center of $\underline{\End}_{\mathscr{T}}(X)$ and satisfies $\gamma\circ\alpha=\Omega^{-2}\alpha\circ\gamma$;
    \item[(G3)] in the almost-split triangle
    $$X\longrightarrow U\longrightarrow\Omega^{-1}X\xrightarrow{\Omega^{-1}\alpha}\Omega^{-1}X,$$
    $U$ is not isomorphic to $\Omega^{-1}U$.
\end{enumerate}
\end{definition}

The main theorem of \cite{Che26v2} states that if $\underline{\Mod}(\mathscr{T})$ contains a gemstone, then $\mathscr{T}$ admits a pretriangulated structure $\Delta$ with the same suspension as in the given triangulated structure, which is not triangulated. The Heller parameter of this pretriangulated structure (see Remark \ref{direction}) is of the form
$$ \Theta = \Theta_0 \circ \Sigma u \colon \Sigma \Rightarrow \Omega^{-3}, $$
where $\Theta_0$ is the Heller parameter associated to the triangulated structure $\Delta_0$ on $\mathscr{T}$ and $u$ is a triangulated natural automorphism of the identity functor which is the identity on every indecomposable not isomorphic to the gemstone $X$ or to $\Omega^{-1}X$.

\begin{remark} \label{direction}
    In its original form \cite[Thm.~16.4]{heller1968stable}, Heller's theorem records a pretriangulated structure $\Delta$ by a natural isomorphism
    $$ \Theta \colon \Sigma \overset{\sim}{\Longrightarrow} \Omega^{-3} $$
on $\underline{\Mod}(\mathscr{T})$. This is what \cite{Che26v2} calls a \emph{Heller parameter}. The equivalence $Z^0\colon \underline{\Ac}(\mathscr{T})\to \underline{\Mod}(\mathscr{T})$ of Section \ref{sec:heller} satisfies $Z^0(X^{\bullet}[1])=\Omega^{-1}Z^0(X^{\bullet})$, and it transports the natural isomorphism $\Sigma\Rightarrow [3]$ associated to $\Delta$ by Theorem \ref{heller_version} to $\Theta$.
\end{remark}

We denote by $\theta_0$ and $\theta$ the natural isomorphisms associated to $\Delta_0$ and $\Delta$ by Theorem \ref{heller_version}.

The authors of \cite{Che26v2} use the following notion throughout their paper.

\begin{definition}[{\cite[Def.~3.5]{christensen_frankland_2022_good_morphisms}}] \label{def:lightning_flash}
    Let $\xi\colon X\xrightarrow{u}Y\xrightarrow{v}Z\xrightarrow{w}\Sigma X $
    be a distinguished triangle in a pretriangulated category. A morphism $\ell\colon Z\to Z$ is a \emph{lightning flash} on $\xi$ if $\ell=v\circ\varphi\circ w$ for some $\varphi\colon\Sigma X\to Y$.
\end{definition}

There is a relation between this concept and the vanishing of some naturality defect products.

\begin{proposition} \label{lightning_flash}
    Let $\eta\colon \id_{\underline{\Ac}(\mathscr{T})} \Rightarrow \id_{\underline{\Ac}(\mathscr{T})}$ be a triangulated natural transformation and let
    $$\xi\colon \hspace{4mm} A^{\bullet} \overset{k}{\longrightarrow} B^{\bullet} \overset{g}{\longrightarrow} D^{\bullet} \overset{h}{\longrightarrow} A^{\bullet}[1]$$
    be a distinguished triangle in $\underline{\Ac}(\mathscr{T})$. Assume that $\eta_{A^{\bullet}}= 0$ and $\eta_{B^{\bullet}}=0$. Then, the product $\langle g,k\rangle_{\eta}$ vanishes if and only if $\eta_{D^{\bullet}}$ is a lightning flash on $\xi$.
\end{proposition}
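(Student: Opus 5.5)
Throughout, $F=G=\id$ and $t=0$. By Proposition \ref{product_props} (3) with invertible maps, and since being a lightning flash is invariant under isomorphism of triangles (by naturality of $\eta$), I may assume $\xi$ is the standard cone triangle. So $D^{\bullet}=C^{\bullet}(\kappa)$ for a representative $\kappa$ of $k$, $g=i=[\iota]$, and $h$ is the class of the projection $\pi\colon C^{\bullet}(\kappa)\to A^{\bullet}[1]$, up to the usual sign.

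Since $\eta_{A^{\bullet}}=0=\eta_{B^{\bullet}}$, I choose the representatives $\tilde\eta_{A^{\bullet}}=0$ and $\tilde\eta_{B^{\bullet}}=0$, and an arbitrary representative $\tilde\eta_D$ of $\eta_{D^{\bullet}}$. I take $s=\begin{psmallmatrix}\id\\0\end{psmallmatrix}$ as in the proof of Proposition \ref{operation_to_delta}. Then:
\begin{itemize}
\item $D_{\tilde\eta}(\kappa)=0$, so $x$ ranges over the cocycles in $\hom^{-1}(A^{\bullet},B^{\bullet})$;
\item $D_{\tilde\eta}(\iota)=\tilde\eta_D\iota$, so $y$ ranges over the nullhomotopies of $\tilde\eta_D\iota$;
\item $D_{\tilde\eta}(s)=\tilde\eta_D s$.
\end{itemize}
Hence
$$m=\tilde\eta_D s-y\kappa-\iota x\in \hom^{-1}(A^{\bullet},D^{\bullet}).$$
Its class, read as a morphism $A^{\bullet}[1]\to D^{\bullet}$ in $\underline{\Ac}(\mathscr{T})$, has indeterminacy $i\circ\underline{\Ac}(A^{\bullet}[1],B^{\bullet})+\underline{\Ac}(B^{\bullet}[1],D^{\bullet})\circ k[1]$.

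The key step is to compare $m$ with factorizations of $\eta_{D^{\bullet}}$ through $h$. Since $\eta_{D^{\bullet}}\circ g=g\circ\eta_{B^{\bullet}}=0$, the long exact sequence gives some $\psi\colon A^{\bullet}[1]\to D^{\bullet}$ with $\psi h=\eta_{D^{\bullet}}$. Moreover $\psi$ is unique modulo $\underline{\Ac}(B^{\bullet}[1],D^{\bullet})\circ k[1]$, the maps killed by $h$.

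At the chain level this works as follows. A nullhomotopy $y$ of $\tilde\eta_D\iota$ is the same as an extension of $\tilde\eta_D$, up to homotopy, to a map out of $C^{\bullet}(\iota)\simeq A^{\bullet}[1]$. Concretely, $\tilde\eta_D-(\text{map built from }y)\circ(\ldots)$ factors through $\pi$. I will check by a direct matrix computation, analogous to the one in the proof of Proposition \ref{operation_to_delta}, that the cocycle $\tilde\eta_D s-y\kappa$, viewed as $\psi_y\colon A^{\bullet}[1]\to D^{\bullet}$, satisfies $\psi_y h=\eta_{D^{\bullet}}$ up to the fixed sign. I will also check that every such factorization $\psi$ arises this way for a suitable $y$. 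The latter is the converse direction: given $\psi$, the difference $\tilde\eta_D-\psi\pi$ is nullhomotopic, and restricting the nullhomotopy to $B^{\bullet}$ produces $y$. This identification, including the signs, is the main obstacle. The rest is bookkeeping with Proposition \ref{all_elements_come_from_choices}.

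Granting it, $\langle g,k\rangle_\eta$ equals the set of classes $\psi-i\circ[x]$, where $\psi$ runs over factorizations of $\eta_{D^{\bullet}}$ through $h$ and $[x]$ runs over $\underline{\Ac}(A^{\bullet}[1],B^{\bullet})$. Thus $\langle g,k\rangle_\eta$ vanishes if and only if some factorization has the form $\psi=g\varphi$ with $\varphi\colon A^{\bullet}[1]\to B^{\bullet}$. That is exactly the condition $\eta_{D^{\bullet}}=g\varphi h$, i.e.\ that $\eta_{D^{\bullet}}$ is a lightning flash on $\xi$.
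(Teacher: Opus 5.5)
Your proposal is correct and is essentially the paper's proof: you reduce to the standard cone triangle via part (3) of Proposition~\ref{product_props} and naturality of $\eta$, take zero representatives at $A^{\bullet}$ and $B^{\bullet}$, show that the class of $\tilde\eta_D s-y\kappa$ is a factorization of $\eta_{D^{\bullet}}$ through $h$, and then match indeterminacies using exactness of $\underline{\Ac}(-,D^{\bullet})$ on the rotated triangle. The computation you defer is a one-liner in the paper --- with $r=\begin{psmallmatrix}0&1\end{psmallmatrix}\colon D^{\bullet}\to B^{\bullet}$ one has $s\pi+\iota r=\id$ and $\partial(r)=-\kappa\pi$ (reading $\pi$ as a degree-one map $D^{\bullet}\to A^{\bullet}$), hence $\tilde\eta_D-(\tilde\eta_D s-y\kappa)\pi=\partial(yr)$ --- and your extra check that every factorization arises from some $y$ is unnecessary, since a single factorization together with coset arithmetic already gives the equivalence.
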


\begin{proof}
    We first prove the lemma for a standard triangle. Let $\kappa$ be a representative of $k$ and consider
    $$\xi'\colon \hspace{4mm} A^{\bullet} \overset{k}{\longrightarrow} B^{\bullet} \overset{i}{\longrightarrow} C^{\bullet}(\kappa) \overset{p}{\longrightarrow} A^{\bullet}[1]$$
    in $\underline{\Ac}(\mathscr{T})$, where $i$ and $p$ denote the inclusion into and the projection from the mapping cone.

    \emph{Claim:} there exists a morphism $m\colon A^{\bullet}[1]\to C^{\bullet}(\kappa)$ which satisfies
    $$ \eta_{C^{\bullet}(\kappa)} = m p $$
    and whose identification with a degree $-1$ morphism $A^{\bullet}\to C^{\bullet}(\kappa)$ represents $\langle i, k\rangle_{\eta}$.

    Write $C^{\bullet}= C^{\bullet}(\kappa)$. We construct $m$ as in Definition \ref{delta-product}. Take the obvious representative $\iota$ of $i$ and $s = \begin{psmallmatrix} 1 \\ 0 \end{psmallmatrix}$ with $\partial(s) = \iota\kappa$. Take $x=0$, which we can do since $\eta$ vanishes at $A^{\bullet}$ and $B^{\bullet}$, and take $y$ such that $\partial(y) = D_{\tilde{\eta}}(\iota) = \tilde{\eta}_{C^{\bullet}}\iota $. Then, $m$ is the class of
    $$ \tilde{\eta}_{C^{\bullet}}s - y\kappa. $$
    Consider the morphism $r = \begin{psmallmatrix} 0 & 1 \end{psmallmatrix}\colon C^{\bullet}\to B^{\bullet}$ and let $\rho=\begin{psmallmatrix} 1 & 0 \end{psmallmatrix}$ be the obvious representative of $p$. One then gets
    $$ s\rho + \iota r = \id_{C^{\bullet}} \; \; \text{and} \; \; \partial(r)= -\kappa \rho.$$
    It follows that
    $$ \partial(yr) = \tilde{\eta}_{C^{\bullet}}\iota r + y\kappa \rho $$
    and so
    $$ \tilde{\eta}_{C^{\bullet}} - \big(\tilde{\eta}_{C^{\bullet}}s - y\kappa\big)\rho = \partial(yr) $$
    giving the claim.

    We now use the claim to prove the lemma for $\xi'$. By Definition \ref{def:lightning_flash}, $\eta_{C^{\bullet}}$ is a lightning flash on $\xi'$ if and only if $\eta_{C^{\bullet}}=i\varphi p$ for some $\varphi\colon A^{\bullet}[1]\to B^{\bullet}$, while $\langle i,k\rangle_{\eta}$ vanishes if and only if $m = i\varphi + \psi k[1]$ for some such $\varphi$ and some $\psi\colon B^{\bullet}[1]\to C^{\bullet}$. In the latter case, $\eta_{C^{\bullet}} = mp = i\varphi p$, since $k[1]\circ p=0$. Conversely, if $\eta_{C^{\bullet}} = i\varphi p$, then $\big(m-i\varphi\big)p=0$ and, rotating $\xi'$, we obtain a factorization
    $$\xymatrix@C=5pc@R=3pc{ C^{\bullet} \ar[r]^{p} \ar[dr]_0 & A^{\bullet}[1] \ar[r]^{-k[1]} \ar[d]^(.4){m-i\varphi} & B^{\bullet}[1] \ar@{.>}[dl]^{\psi} \\ & C^{\bullet} & }$$
    so that $m = i\varphi - \psi k[1]$.

    Finally, for a general distinguished triangle $\xi$, there is an isomorphism of triangles $(\id_{A^{\bullet}},\id_{B^{\bullet}},c)$ from $\xi'$ to $\xi$, so that $ci=g$ and $hc=p$. By part $(3)$ of Proposition \ref{product_props}, $\langle g,k\rangle_{\eta}$ vanishes if and only if $\langle i,k\rangle_{\eta}$ does. On the other hand, naturality gives that if $\eta_{C^{\bullet}} = i\varphi p$, then
    $$ \eta_{D^{\bullet}} = c\, i\varphi p\, c^{-1}=g\varphi h,$$
    so $\eta_{D^{\bullet}}$ is a lightning flash on $\xi$ if and only if $\eta_{C^{\bullet}}$ is one on $\xi'$.
\end{proof}

Observe that \cite[Prop.~4.2]{Che26v2} is a corollary of this lemma and Theorem \ref{main_theorem}. We now give the non-vanishing obstruction. Following \cite{Che26v2}, a gemstone $X$ has an associated distinguished triangle
$$ \Omega^{-1} X \xrightarrow{s} U \xrightarrow{w} Y \xrightarrow{z} \Omega^{-2}X, $$
where $U$ is as in (G3). Take acyclic complexes $P^{\bullet}$, $Q^{\bullet}$ and $R^{\bullet}$ with $Z^0(P^{\bullet})=U$, $Z^0(Q^{\bullet})=Y$ and $Z^0(R^{\bullet})=\Omega^{-2}X$, and let $k\colon P^{\bullet}\to Q^{\bullet}$ and $g\colon Q^{\bullet}\to R^{\bullet}$ be the morphisms in $\underline{\Ac}(\mathscr{T})$ corresponding to $w$ and $z$.
\begin{proposition} \label{gemstone_obstruction}
    Let $X$ be a gemstone and let $\theta$, $k$ and $g$ be as above. Then, the naturality defect product $\langle g,k \rangle_{\theta}$ does not vanish.
\end{proposition}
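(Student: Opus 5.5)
Write $\theta=\theta_0\circ\Sigma u$ on $\underline{\Ac}(\mathscr{T})$, where we also write $u$ for the triangulated natural automorphism of the identity of $\underline{\Ac}(\mathscr{T})$ corresponding under $Z^0$ to the automorphism in the Heller parameter. Set $\eta:=u-\id$. This is a triangulated natural transformation of the identity. By hypothesis it vanishes on every indecomposable not isomorphic to $X$ or $\Omega^{-1}X$ (transported to $\underline{\Ac}(\mathscr{T})$).

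First we reduce to $\eta$. Since $\Delta_0$ is triangulated, Theorem \ref{main_theorem} shows that every product $\langle g,k\rangle_{\theta_0}$ vanishes. Part $(5)$ of Proposition \ref{product_props}, applied to the composite $\theta=\theta_0\circ\Sigma u$ of natural isomorphisms, gives
$$\langle g,k\rangle_{\theta}=\theta_{0}\circ\langle g,k\rangle_{\Sigma u}\pm \langle g,k\rangle_{\theta_0}\circ \Sigma u$$
in the quotient by $M_\theta(g,k)$. Hence $\langle g,k\rangle_\theta$ vanishes if and only if $\langle g,k\rangle_{\Sigma u}$ does, because $\theta_0$ is invertible. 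Applying the horizontal composition result, Proposition \ref{horizontal_props}, with $G=\Sigma$, which is an equivalence and hence full, reduces this further to $\langle g,k\rangle_u$. By the linearity in part $(2)$ and by Corollary \ref{identity_product}, this is in turn equivalent to the vanishing of $\langle g,k\rangle_\eta$.

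Next we apply Proposition \ref{lightning_flash} to the distinguished triangle $P^\bullet\xrightarrow{k}Q^\bullet\xrightarrow{g}R^\bullet\to P^\bullet[1]$ corresponding to $\Omega^{-1}X\to U\to Y\to\Omega^{-2}X$, suitably rotated. Its hypotheses require $\eta_{P^\bullet}=0$ and $\eta_{Q^\bullet}=0$. Both $U$ and $Y$ should have no indecomposable summand isomorphic to $X$ or $\Omega^{-1}X$. For $U$ this follows from the almost-split triangle in (G3); for $Y$ we use the explicit description in \cite{Che26v2}. Since $\eta$ is natural and additive, it is then zero on these objects. If the paper's choice of $u$ is only the identity on the relevant indecomposables, we check summand by summand. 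Proposition \ref{lightning_flash} then says that the product vanishes if and only if $\eta_{R^\bullet}$, which corresponds to $u_{\Omega^{-2}X}-\id$ (essentially $\Omega^{-2}$ of the almost-vanishing $\alpha$ transported by $\gamma$, using (G2)), is a lightning flash on the triangle.

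Finally we show that it is not a lightning flash. This is the main obstacle. The plan is to invoke the corresponding statement in \cite{Che26v2}: their proof that $\Delta$ is not triangulated proceeds through \cite[Prop.~4.2]{Che26v2}, by exhibiting exactly that $u-\id$ at $\Omega^{-2}X$ is not a lightning flash on this triangle. Failing a direct citation, we argue as follows. A lightning flash $g\varphi h$ factors through $g\colon Q^\bullet\to R^\bullet$, that is, through $Y$. Using $\underline{\Hom}(X,\Omega^{-1}X)=0$ from (G1) and the almost-split property from (G3), any such composite $\Omega^{-2}X\to\Omega^{-2}X$ lies in a part of the radical that misses the almost-vanishing socle element $\Omega^{-2}\alpha$, whereas $\eta_{R^\bullet}$ is that nonzero almost-vanishing morphism. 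The conclusion is then that $\langle g,k\rangle_\theta\neq0$.
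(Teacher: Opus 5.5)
Your proposal is correct and follows the paper's proof essentially step by step. The steps are the same reduction from $\theta$ through $\Sigma u=\id_\Sigma*u$ and $u$ to $\eta=u-\id$ (part $(5)$ of Proposition \ref{product_props}, Proposition \ref{horizontal_props}, part $(2)$ and Corollary \ref{identity_product}), the vanishing of $\eta$ on $U$ and $Y$ (which the paper cites from \cite[Lem.~5.5 and Lem.~5.7]{Che26v2}), Proposition \ref{lightning_flash}, and the non-lightning-flash fact imported from \cite{Che26v2} after transporting along $Z^0$; the precise references are $\eta_{\Omega^{-2}X}\neq 0$ from the proof of \cite[Thm.~6.2]{Che26v2} together with \cite[Prop.~5.8]{Che26v2}, not \cite[Prop.~4.2]{Che26v2}, which is only the general criterion the paper recovers from Proposition \ref{lightning_flash} and Theorem \ref{main_theorem}.

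Rely on that citation rather than your fallback sketch, which is not an argument. Factoring through an object with no summand isomorphic to $\Omega^{-2}X$ does not by itself stop an endomorphism from being almost vanishing: in the stable module category of $\Bbbk[t]/(t^4)$, multiplication by $t$ on $\Bbbk[t]/(t^2)$ is almost vanishing and factors through the simple module. So the specific shape of a lightning flash on this triangle genuinely has to be exploited.
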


\begin{proof}
    Under $Z^0$, the decomposition $\Theta=\Theta_0\circ\Sigma u$ becomes $\theta=\theta_0\circ\Sigma u$, where we also denote by $u$ the corresponding triangulated natural automorphism of $\id_{\underline{\Ac}(\mathscr{T})}$. Since $\Delta_0$ is triangulated, $\langle g,k\rangle_{\theta_0}$ vanishes by Theorem \ref{main_theorem}. Hence, by part $(5)$ of Proposition \ref{product_props}, $\langle g,k\rangle_{\theta}$ vanishes if and only if $\langle g,k\rangle_{\Sigma u}$ does, which, by Proposition \ref{horizontal_props} applied to $\Sigma u=\id_{\Sigma}*u$, happens if and only if $\langle g,k\rangle_{u}$ vanishes. Defining $\eta= u - \id $, this is equivalent to the vanishing of $\langle g,k\rangle_{\eta}$, by part $(2)$ of Proposition \ref{product_props} and Corollary \ref{identity_product}.

    By \cite[Lem.~5.5 and Lem.~5.7]{Che26v2}, neither $X$ nor $\Omega^{-1}X$ is a direct summand of $U$ or $Y$, and so $\eta$ vanishes at $P^{\bullet}$ and $Q^{\bullet}$. Since $Z^0$ is a triangulated equivalence, the rotated triangle
    $$ \chi\colon \hspace{4mm} U \xrightarrow{w} Y \xrightarrow{z} \Omega^{-2}X \xrightarrow{-\Omega^{-1}s} \Omega^{-1}U $$
    lifts to a distinguished triangle in $\underline{\Ac}(\mathscr{T})$ of the form
    $$ \xi\colon \hspace{4mm} P^{\bullet} \overset{k}{\longrightarrow} Q^{\bullet} \overset{g}{\longrightarrow} R^{\bullet} \overset{h}{\longrightarrow} P^{\bullet}[1]. $$
    By Proposition \ref{lightning_flash}, it suffices to prove that $\eta_{R^{\bullet}}$ is not a lightning flash on $\xi$. Since triangulated functors preserve lightning flashes, it is enough to show that $Z^0(\eta_{R^{\bullet}})=\eta_{\Omega^{-2}X}$ is not a lightning flash on $\chi$. But $\eta_{\Omega^{-2}X}\neq 0$ \cite[proof of Thm.~6.2]{Che26v2}, so this follows from \cite[Prop.~5.8]{Che26v2}.
\end{proof}

As a consequence, we recover \cite[Thm.~1.1]{Che26v2}. Concrete examples of gemstones are given in \cite[\S 7]{Che26v2}.

\subsection{An explicit obstruction over \texorpdfstring{$\Pi(\mathbb{D}_4)$}{P(D4)}} \label{sec:pi_d4}

We now follow the example in \cite{Name_Name_2026}. Let $\Bbbk = \overline{\mathbb{F}}_2$ be the algebraic closure of the field with two elements and
$$ \Lambda = \Pi(\mathbb{D}_4) $$
the preprojective algebra of type $\mathbb{D}_4$ over $\Bbbk$. Explicitly, we can write $\Lambda$ as the quotient of the path algebra $\Bbbk Q$ of the quiver
$$ Q = \vcenter{\xymatrix@R=2pc@C=1.2pc{
 & & 1 \ar@/^/[d]^{b_1} & & \\
3 \ar@/^/[rr]^{b_3} & & 0 \ar@/^/[u]^{a_1} \ar@/^/[ll]^{a_3} \ar@/^/[rr]^{a_2} & & 2 \ar@/^/[ll]^{b_2}
}}$$
by the ideal generated by the mesh relations
$$ b_1 a_1 , \hspace{5mm} b_2 a_2 , \hspace{5mm} b_3 a_3, \hspace{5mm} a_1 b_1 + a_2 b_2 + a_3 b_3 . $$
Here paths are concatenated from left to right. No signs are needed in the last relation since $\operatorname{char}\Bbbk = 2$.

The algebra $\Lambda$ has dimension $28$ over $\Bbbk$. Since $\operatorname{char}\Bbbk = 2$, it is symmetric and its bimodule periodicity is untwisted \cite[\S 4]{brenner_butler_king_2002}, so the Białkowski--Erdmann--Skowro\'nski bimodule extension \cite[Thm.~2.2]{bialkowski_erdmann_skowronski_2007} takes the form
\begin{equation} \label{bimodule_resolution}
    \mathcal{E}\colon \hspace{4mm} 0 \longrightarrow \Lambda \longrightarrow B_2 \longrightarrow B_1 \longrightarrow B_0 \longrightarrow \Lambda \longrightarrow 0
\end{equation}
whose middle terms are projective bimodules; it realizes the isomorphism $\Omega^3_{\Lambda^{e}}(\Lambda)\cong \Lambda$, where $\Lambda^{e}=\Lambda\otimes_{\Bbbk} \Lambda^{op}$.
We now take $\mathscr{T}=\proj \Lambda$, the category of finitely generated projective right $\Lambda$-modules, so that $\Mod\mathscr{T}\simeq \Mod\Lambda$.
Tensoring with \eqref{bimodule_resolution} gives an exact sequence of exact functors
$$ 0 \longrightarrow \id \longrightarrow -\otimes_{\Lambda}B_2 \longrightarrow -\otimes_{\Lambda}B_1 \longrightarrow -\otimes_{\Lambda}B_0 \longrightarrow \id \longrightarrow 0 $$
on $\Mod\Lambda$ whose middle terms take values in $\proj\Lambda$. This is precisely the datum required by \cite[Thm.~8.1]{amiot_2007_structure_triangulated_categories}, which therefore endows $\mathscr{T}$ with a triangulated structure $\Delta_0$ whose suspension functor is $\Sigma=\id$.

Let $\Theta_0\colon \id \Rightarrow \Omega^{-3}$ be the Heller parameter of $\Delta_0$ and $\theta_0\colon \id \Rightarrow [3]$ the natural isomorphism associated to $\Delta_0$ by Theorem \ref{heller_version}, so that $Z^0(\theta_0)=\Theta_0$; see Remark \ref{direction}.

We now introduce the modules on which the example rests. Let $\alpha\in \Bbbk$ be such that $\alpha^2+\alpha+1=0$. For $n=1,2,3$, the \emph{band module} $X_n$ is the representation displayed below, whose matrices along $a_i$ and $b_i$ we denote by $A_{i,n}$ and $B_{i,n}$. They act on column vectors, so that, for example, the path $b_ia_i$ acts as $A_{i,n}B_{i,n}$. The first one is
$$ X_1 = \vcenter{\xymatrix@R=2.4pc@C=2.6pc{
 & & \Bbbk \ar@/^/[d]^{\begin{psmallmatrix} 0 \\ 1\end{psmallmatrix}} & & \\
\Bbbk \ar@/^/[rr]^{\begin{psmallmatrix} 0 \\ 1+\alpha\end{psmallmatrix}} & & \Bbbk^2 \ar@/^/[u]^{(1\; 0)} \ar@/^/[ll]^{(1\; 0)} \ar@/^/[rr]^{(1\; 0)} & & \Bbbk \ar@/^/[ll]^{\begin{psmallmatrix} 0 \\ \alpha \end{psmallmatrix}}
}},$$
the second one
$$ X_2 = \vcenter{\xymatrix@R=2.4pc@C=2.6pc{
 & & \Bbbk^2 \ar@/^/[d]^{B_{1,2}} & & \\
\Bbbk^2 \ar@/^/[rr]^{B_{3,2}} & & \Bbbk^4 \ar@/^/[u]^{A_{1,2}} \ar@/^/[ll]^{A_{3,2}} \ar@/^/[rr]^{A_{2,2}} & & \Bbbk^2 \ar@/^/[ll]^{B_{2,2}}
}}$$
where
\begin{gather*}
    A_{1,2} = A_{2,2} = A_{3,2} = \left(\begin{array}{@{\,}c|c@{\,}} I_2 & O_2 \end{array}\right), \\[2ex]
    B_{1,2} = \left(\begin{array}{@{\,}c@{\,}} O_2 \\ \hline I_2 \end{array}\right), \qquad
    B_{2,2} = \left(\begin{array}{@{\,}cc@{\,}} \multicolumn{2}{c}{\raisebox{-.5\normalbaselineskip}[0pt][0pt]{$O_2$}} \\ & \\ \hline \alpha&1 \\ 0&\alpha \end{array}\right), \qquad
    B_{3,2} = \left(\begin{array}{@{\,}cc@{\,}} \multicolumn{2}{c}{\raisebox{-.5\normalbaselineskip}[0pt][0pt]{$O_2$}} \\ & \\ \hline 1+\alpha&1 \\ 0&1+\alpha \end{array}\right)
\end{gather*}
and the third one
$$ X_3 = \vcenter{\xymatrix@R=2.4pc@C=2.6pc{
 & & \Bbbk^3 \ar@/^/[d]^{B_{1,3}} & & \\
\Bbbk^3 \ar@/^/[rr]^{B_{3,3}} & & \Bbbk^6 \ar@/^/[u]^{A_{1,3}} \ar@/^/[ll]^{A_{3,3}} \ar@/^/[rr]^{A_{2,3}} & & \Bbbk^3 \ar@/^/[ll]^{B_{2,3}}
}}$$
where
\begin{gather*}
    A_{1,3} = A_{2,3} = A_{3,3} = \left(\begin{array}{@{\,}c|c@{\,}} I_3 & O_3 \end{array}\right), \\[2ex]
    B_{1,3} = \left(\begin{array}{@{\,}c@{\,}} O_3 \\ \hline I_3 \end{array}\right), \qquad
    B_{2,3} = \left(\begin{array}{@{\,}ccc@{\,}} && \\ \multicolumn{3}{c}{O_3} \\ && \\ \hline \alpha&1&0 \\ 0&\alpha&1 \\ 0&0&\alpha \end{array}\right), \qquad
    B_{3,3} = \left(\begin{array}{@{\,}ccc@{\,}} && \\ \multicolumn{3}{c}{O_3} \\ && \\ \hline 1+\alpha&1&0 \\ 0&1+\alpha&1 \\ 0&0&1+\alpha \end{array}\right).
\end{gather*}
In \cite{Name_Name_2026}, $X_n$ is defined for all $n\geq 1$, but only the first three are actually used. The three of them are indecomposable non-projective $\Lambda$-modules \cite[Lem.~3.1]{Name_Name_2026} and we point out the relations
\begin{equation} \label{AB_zero}
    A_{i,n}B_{j,n} = 0
\end{equation}
for every $i,j\in \{1,2,3\}$ and $n=1,2,3$, which do not come from the mesh relations, as the paths $b_ja_i$ with $i\neq j$ are nonzero in $\Lambda$.

The last module we need is the projective $P_0 = e_0 \Lambda$. A path basis of its vector spaces is
\begin{align*}
    (P_0)_0 &= \langle e_0, x, y, yx\rangle, & (P_0)_1 &= \langle a_1, ya_1\rangle, \\
    (P_0)_2 &= \langle a_2, ya_2\rangle, & (P_0)_3 &= \langle a_3, xa_3\rangle,
\end{align*}
where
$$ x = a_2b_2, \qquad y = a_3b_3. $$
In particular, since $x^2= 0$, $y^2= 0$, and $(x+y)^2 = (a_1b_1)^2 = 0$, we get
$$ 0 = (x+y)^2 = x^2+xy+yx+y^2 = xy+yx $$
and, therefore, the endomorphism algebra of $P_0$ is
\begin{equation} \label{end_p0}
    \End(P_0) \cong e_0 \Lambda e_0 \cong \Bbbk[x,y]/(x^2,y^2),
\end{equation}  
a commutative local algebra of dimension $4$ over $\Bbbk$.

Write $L_{i,n}$ for the lower block of $B_{i,n}$, so that $L_{1,n}=I_n$ and $L_{3,n}=I_n+L_{2,n}$, all three being invertible. The band modules admit the following period-one projective resolution.

\begin{lemma}[{\cite[Lem.~3.2]{Name_Name_2026}}] \label{period_one}
    For $n=1,2,3$ there is a short exact sequence
    \begin{equation} \label{period_one_sequence}
        0 \longrightarrow X_n \overset{i_n}{\longrightarrow} P_0^n \overset{q_n}{\longrightarrow} X_n \longrightarrow 0.
    \end{equation}
    In the bases above, write the central coordinates of $P_0^n$ as $(u_1,u_2,u_3,u_4)\in (\Bbbk^n)^4$ and its $i$-th leaf coordinates as $(v_i,w_i)\in (\Bbbk^n)^2$. Then
    \begin{align*}
        (q_n)_0(u_1,u_2,u_3,u_4) &= (u_1, L_{2,n}u_2+L_{3,n}u_3), & (q_n)_i(v_i,w_i) &= v_i, \\
        (i_n)_0(r,t) &= (0,L_{3,n}r,L_{2,n}r,t), & (i_n)_i(z) &= (0,L_{i,n}z).
    \end{align*}
\end{lemma}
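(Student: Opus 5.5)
The plan is a direct verification: check that $q_n$ and $i_n$ are module homomorphisms, that $q_n i_n=0$, and that the dimensions force exactness. First I fix an explicit description of the representation $P_0^n$ in the path basis. At the central vertex the basis is $e_0,x,y,yx$, so the central coordinates are $(u_1,u_2,u_3,u_4)$; at leaf $i$ the basis is $a_i$ together with $ya_1$, $ya_2$ or $xa_3$ respectively. Right multiplication by the arrows gives the matrices. For instance, $a_i$ sends $e_0\mapsto a_i$, and it sends $x$ to $a_2b_2a_i$ and $y$ to $a_3b_3a_i$. These last are rewritten via the mesh relations: $b_ia_i=0$, $a_1b_1=x+y$, $xy=yx$ (in characteristic $2$), $x^2=y^2=0$. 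Likewise $b_i$ sends $a_i\mapsto a_ib_i$, which is $x+y$, $x$ or $y$, and sends the second leaf basis element to the corresponding degree-three path ($yx$ or $xy=yx$). Recording these as $4\times 2$ and $2\times 4$ block matrices, tensored with $I_n$, gives the arrow actions on $P_0^n$.

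With these matrices in hand, checking that $q_n$ and $i_n$ are homomorphisms means verifying, for each $i$, the commutation relations $(q_n)_i\,a_i = A_{i,n}(q_n)_0$ and $(q_n)_0\,b_i = B_{i,n}(q_n)_i$, and the same for $i_n$. These become identities between small block matrices in $L_{2,n}$ and $L_{3,n}$. They should reduce to three facts: $L_{1,n}=I_n$, $L_{3,n}=I_n+L_{2,n}$, and that $L_{2,n}$ and $L_{3,n}$ commute, since both are polynomials in the same Jordan block. At the step involving $yx$ I expect to need $L_{2,n}L_{3,n}$ to play the role of the top socle coordinate. In fact the condition $\alpha^2+\alpha+1=0$ gives $\alpha(1+\alpha)=1$, so $L_{2,n}L_{3,n}$ is unipotent, and I would carry this around explicitly. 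I expect this bookkeeping, especially the $b_i$-compatibility for $i_n$ at the socle coordinate $u_4=t$, to be the main obstacle; I would handle it by treating the case $n=1$ first and then observing that all matrices are the scalar formulas with $\alpha$ replaced by the commuting matrices $L_{2,n}$ and $L_{3,n}$.

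Finally, exactness. The composite $q_n i_n$ vanishes at the centre, since it sends $(r,t)$ to $(0,\,L_{2,n}L_{3,n}r+L_{3,n}L_{2,n}r)=0$ in characteristic $2$, and it vanishes at the leaves since the $v$-coordinate of $i_n$ is $0$. The map $q_n$ is surjective, because $u_1$ and $v_i$ are free and $L_{2,n}$ is invertible. The map $i_n$ is injective, because $L_{i,n}$ is invertible and $r$, $t$ occur as coordinates $L_{3,n}r$ and $t$. Dimensions then close the argument: $\dim P_0^n=10n=2\dim X_n$, since $\dim X_n=5n$. Hence $\ker q_n$ has dimension $5n=\dim\operatorname{im} i_n$, and together with $\operatorname{im} i_n\subseteq\ker q_n$ this gives $\ker q_n=\operatorname{im} i_n$.
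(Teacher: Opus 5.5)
Your verification is correct. There is no internal proof to compare against here: the paper imports this lemma from \cite[Lem.~3.2]{Name_Name_2026}. A direct check of the kind you describe is a complete proof: show that $q_n$ and $i_n$ are homomorphisms, that $q_ni_n=0$, that $q_n$ is surjective and $i_n$ is injective, and conclude with the count $\dim P_0^n=10n=2\dim X_n$.

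Two of your expectations are off, though neither affects correctness.

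First, the homomorphism checks need only $L_{1,n}=I_n$ and $L_{2,n}+L_{3,n}=I_n$. For instance, $a_1$ sends $(i_n)_0(r,t)=(0,L_{3,n}r,L_{2,n}r,t)$ to $(0,(L_{3,n}+L_{2,n})r)=(0,r)=(i_n)_1(r)$.

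Second, the socle compatibility you flag as the main obstacle is immediate. Both $(i_n)_0(B_{i,n}z)$ and $b_i\cdot(i_n)_i(z)$ equal $(0,0,0,L_{i,n}z)$. The commutativity of $L_{2,n}$ and $L_{3,n}$ enters only in $q_ni_n=0$.

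The relation $\alpha(1+\alpha)=1$ plays no role anywhere. You must in fact avoid using it in the $n=1$ computation if you intend to transfer that computation to general $n$ by substituting $L_{2,n}$ for $\alpha$. For $n\geq 2$, $L_{2,n}L_{3,n}$ is unipotent but not equal to $I_n$.

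A minor slip: $yx=a_3b_3a_2b_2$ is a path of length four, not three.
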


Splicing \eqref{period_one_sequence} with itself, we obtain the endomorphism
\begin{equation} \label{d_n}
    d_n = i_n q_n \colon P_0^n \longrightarrow P_0^n,
\end{equation}
which satisfies $d_n^2=0$ and $\ker d_n = \operatorname{im} d_n = i_n(X_n)$. Under the identification \eqref{end_p0}, writing
$$ p = (1+\alpha)x+\alpha y, $$
we have
$$ d_1 = p, \qquad
d_2 = \begin{pmatrix} p & x+y \\ 0 & p \end{pmatrix}, \qquad
d_3 = \begin{pmatrix} p & x+y & 0 \\ 0 & p & x+y \\ 0 & 0 & p \end{pmatrix}. $$

Let $E_1^{\bullet}$ be the acyclic complex
$$ \dots \to P_0 \overset{d_1}{\longrightarrow} P_0 \overset{d_1}{\longrightarrow} P_0 \to \dots, $$
so that
$$ Z^0(E_1^{\bullet}) = \ker d_1 = i_1(X_1)\cong X_1. $$
Let now
$$ \tilde{\varepsilon}\colon E_1^{\bullet}\longrightarrow E_1^{\bullet} $$
be the morphism of complexes all of whose components are $\alpha y\in \End(P_0)$, which commutes with the differential because $\End(P_0)$ is commutative, and let
$$ \varepsilon = Z^0(\tilde{\varepsilon})\colon X_1\longrightarrow X_1 $$
be the induced endomorphism. Under the identification of $X_1$ with $\ker d_1$ given by $i_1$, it is the morphism used in \cite{Name_Name_2026},
$$ \varepsilon_0 = \begin{pmatrix} 0 & 0 \\ 1 & 0 \end{pmatrix}, \qquad \varepsilon_i = 0. $$

The pretriangulated structure $\Delta$ of \cite{Name_Name_2026} has Heller parameter
$$ \Theta = (\id+\zeta)\Theta_0\colon \id \Rightarrow \Omega^{-3}, $$
where $\zeta$ is a natural transformation of the identity functor of $\underline{\Mod}(\Lambda)$ whose component at $X_1$ is $\varepsilon$ and which vanishes at every indecomposable not isomorphic to $X_1$, such that $\id+\zeta$ is a triangulated natural automorphism \cite[Lem.~4.1]{Name_Name_2026}. Let $\theta\colon \id \Rightarrow [3]$ be the natural isomorphism on $\underline{\Ac}(\mathscr{T})$ associated to $\Delta$. We also denote by $\zeta$ the natural transformation of the identity functor of $\underline{\Ac}(\mathscr{T})$ corresponding to $\zeta$ under $Z^0$, so that $\theta = (\id+\zeta)\theta_0$ and the component of $\zeta$ at $E_1^{\bullet}$ is the class of $\tilde{\varepsilon}$.

The main theorem of \cite{Name_Name_2026} is that $\Delta$ does not satisfy Neeman's cone criterion. The proof given in that paper is computer-assisted, since it relies on exact linear algebra over $\mathbb{F}_4$: the Heller endpoint of the cone is computed at one filler and along a basis of the eight-dimensional space of fillers. We will now see that the naturality defect product of $\theta$ has a non-vanishing instance. This gives a proof that $\Delta$ does not satisfy Neeman's criterion which can be carried out entirely by hand.

Put
$$ t_2 = \begin{pmatrix} p & p+x+y \\ 0 & p \end{pmatrix}, \qquad
t_3 = \begin{pmatrix} p & p+x+y & x+y \\ 0 & p & p+x+y \\ 0 & 0 & p \end{pmatrix}, $$
and consider the $3$-periodic acyclic complexes
$$ E_2^{\bullet}\colon \; P_0^2 \overset{t_2}{\to} P_0^2 \overset{d_2}{\to} P_0^2 \overset{d_2}{\to} P_0^2, \qquad
E_3^{\bullet}\colon \; P_0^3 \overset{d_3}{\to} P_0^3 \overset{d_3}{\to} P_0^3 \overset{t_3}{\to} P_0^3 $$
where the first displayed term sits in degree $0$. These are exact triangles with respect to $\theta_0$ and $\theta$ \cite[Lem.~5.1 and Lem.~5.2]{Name_Name_2026}. This fact is good for context but irrelevant to our proof. We now consider the sequence of $3$-periodic morphisms $E_2^{\bullet} \to E_3^{\bullet} \to E_1^{\bullet}$ given by
$$\xymatrix@C=2.8pc@R=2.4pc{
P_0^2 \ar[r]^{t_2} \ar[d]_{a^0} & P_0^2 \ar[r]^{d_2} \ar[d]_{a^1} & P_0^2 \ar[r]^{d_2} \ar[d]_{a^1} & P_0^2 \ar[d]_{a^0} \\
P_0^3 \ar[r]^{d_3} \ar[d]_{\pi} & P_0^3 \ar[r]^{d_3} \ar[d]_{\pi} & P_0^3 \ar[r]^{t_3} \ar[d]_{\pi} & P_0^3 \ar[d]_{\pi} \\
P_0 \ar[r]^{d_1} & P_0 \ar[r]^{d_1} & P_0 \ar[r]^{d_1} & P_0
}$$
where
$$ a^0 = \begin{pmatrix} 1&0 \\ 0&1 \\ 0&0 \end{pmatrix}, \qquad
a^1 = \begin{pmatrix} 1&1 \\ 0&1 \\ 0&0 \end{pmatrix}, \qquad
\pi = \begin{pmatrix} 0&0&1 \end{pmatrix}. $$
Denote by $\tilde{a}=(a^0,a^1,a^1)$ and $\tilde{g}= (\pi,\pi,\pi)$ these morphisms, whose composition $\tilde{g}\tilde{a}$ is strictly zero, and by $a$ and $g$ their classes. We will prove that the naturality defect product $\langle g,a\rangle_{\theta}$ does not vanish and is therefore an explicit obstruction to (TR4).
Since $\Delta_0$ is triangulated, $\langle g,a\rangle_{\theta_0}$ vanishes by Theorem \ref{main_theorem}. Hence, by part $(5)$ of Proposition \ref{product_props}, $\langle g,a\rangle_{\theta}$ vanishes if and only if $\langle g,a\rangle_{\id+\zeta}$ does, which, by part $(2)$ of Proposition \ref{product_props} and Corollary \ref{identity_product}, is equivalent to the vanishing of $\langle g,a\rangle_{\zeta}$. It is then enough to prove that $\langle g,a\rangle_{\zeta}$ does not vanish.

\begin{remark}
    The reader may wonder why we use the projection onto the third component instead of the inclusion into the cone. The reason is simple: the complex $E_1^{\bullet}$ is much smaller and homotopy equivalent to the mapping cone. A homotopy equivalence $C^{\bullet}(\tilde{a})\to E_1^{\bullet}$ is given by the projection onto the last component. So $\tilde{g}$ is the composition of the inclusion into the cone with a homotopy equivalence and so, by part $(3)$ of Proposition \ref{product_props}, their naturality defect products vanish simultaneously. Note that the existence of such a homotopy equivalence is not surprising in view of \cite[Lem.~6.1]{Name_Name_2026}.
\end{remark}

Since $Z^0(E_2^{\bullet})\cong X_2$ and $Z^0(E_3^{\bullet})\cong X_3$, $\zeta$ vanishes at $E_2^{\bullet}$ and $E_3^{\bullet}$, so we can take zero to be a representative for $\zeta$ in those components, while in the $E_1^{\bullet}$ component we take $\tilde{\varepsilon}$. Since $\tilde{g}\tilde{a}=0$, the naturality defect product $\langle g,a\rangle_{\zeta}$ is represented by the cocycle
$$ m = w\tilde{a}\in \hom^{-1}(E_2^{\bullet},E_1^{\bullet}) $$
for some $w\in \hom^{-1}(E_3^{\bullet},E_1^{\bullet})$ such that $\partial(w) = D_{\tilde{\zeta}}(\tilde{g}) = \tilde{\varepsilon}\tilde{g}$. One can check that the $6$-periodic morphism given by
$$ w^j = \begin{pmatrix}
    0 & 1 & \gamma_j
\end{pmatrix} $$
with
$$ \gamma_0 = \alpha, \qquad \gamma_1 = 0, \qquad \gamma_2 = \alpha, \qquad \gamma_3 = 1, \qquad \gamma_4 = \alpha^2, \qquad \gamma_5 = 1 $$
works. Composition with $\tilde{a}$ shows that $m$ is given by the matrix $(0\; 1)$ in all components. 

Since $\operatorname{char}\Bbbk=2$, we have $E_1^{\bullet}[-1]=E_1^{\bullet}$, so the cocycle $m$ is a morphism of complexes $E_2^{\bullet}\to E_1^{\bullet}$ and we may write $m' = Z^0(m)\in \Hom_{\Lambda}(X_2,X_1)$. The data of a morphism $f$ in the latter space is equivalent to a matrix of the form
$$ F = \begin{pmatrix}
    0 & \lambda_1 & 0 & 0 \\
    \lambda_2 & \lambda_3 & 0 & \lambda_1 
\end{pmatrix} $$
with $\lambda_1,\lambda_2,\lambda_3\in \Bbbk$. The associated matrix is precisely the component of $f$ at vertex $0$ seen as a morphism of representations. The other components are exactly
$$ f_i = \begin{pmatrix}
    0 & \lambda_1 
    \end{pmatrix} $$
and are fully determined by $F$. In particular,
$$ \dim_{\Bbbk} \Hom_{\Lambda}(X_2,X_1) = 3. $$
Under this identification, it is straightforward to check that $m'$ corresponds to the matrix
$$ \begin{pmatrix}
0 & 1 & 0 & 0 \\
0 & 0 & 0 & 1
\end{pmatrix}. $$
We define $\varphi\colon \Hom_{\Lambda}(X_2,X_1)\to \Bbbk$ so that $F$ as above is sent to $\lambda_1$. In particular, $\varphi(m') = 1$. \emph{Claim:} $\varphi(f)$ vanishes in all of the following cases:
\begin{itemize}
    \item $f$ factors through a projective;
    \item $f \in g' \circ \Hom_{\Lambda}(X_2,X_3)$;
    \item $f \in \Hom_{\Lambda}(X_3,X_1) \circ a'$,
\end{itemize}
where $a' = Z^0(\tilde{a})$ and $g' = Z^0(\tilde{g})$. 

For the first case, we can assume $f$ factors through the injective envelope $P_0^2$ of $X_2$. The image of the inclusion into the envelope at vertex $1$ is generated by $(0,ya_1)$ and $(ya_1,0)$. Since the action of $ya_1 = a_3b_3a_1$ on $X_1$ is zero by \eqref{AB_zero}, $f$ vanishes at vertex $1$ and, therefore, $\varphi(f) = 0$.

For the second case, let $f = g' h$ for some $h\in \Hom_{\Lambda}(X_2,X_3)$. The relations $h_iA_{i,2}=A_{i,3}h_0$ and $h_0B_{i,2}=B_{i,3}h_i$ force its component at vertex $1$ to be of the form
$$ h_1 = \begin{pmatrix}
    \mu_{11} & \mu_{12} \\
    0 & \mu_{11} \\
    0 & 0
\end{pmatrix}. $$
Moreover, one sees that $g'$ is the projection onto the third component at every vertex. Hence, $g'_1 h_1 = 0$ and so $\varphi(f) = 0$.

Finally, let $f = h a'$ for some $h\in \Hom_{\Lambda}(X_3,X_1)$. The analogous relations force its component at vertex $1$ to be of the form
$$ h_1 = \begin{pmatrix}
    0 & 0 & \nu_3
\end{pmatrix}. $$
The morphism $a'$ is the inclusion of the first two components at every vertex, so $h_1 a'_1 = 0$ and $\varphi(f) = 0$.

Consequently, the class of $m'$ does not vanish in the quotient
$$ \frac{\underline{\Hom}_{\Lambda}(X_2,X_1)}{g'\circ \underline{\Hom}_{\Lambda}(X_2,X_3) + \underline{\Hom}_{\Lambda}(X_3,X_1)\circ a'}, $$
which corresponds under $Z^0$ to the quotient where $\langle g,a\rangle_{\zeta}$ lives. Since $Z^0$ is an equivalence, $\langle g,a\rangle_{\zeta}$ does not vanish either.

We have thus proved the following result.

\begin{theorem} \label{pi_d4_theorem}
    The naturality defect product $\langle g,a\rangle_{\theta}$ is a local obstruction to (TR4). Consequently, $(\proj \Lambda, \id, \Delta)$ is a pretriangulated category which is not triangulated, recovering \cite[Thm.~6.4]{Name_Name_2026}.
\end{theorem}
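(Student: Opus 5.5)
The theorem is essentially a summary of the computation carried out just before it. The plan is to assemble three ingredients: a reduction from $\theta$ to $\zeta$, an explicit representative of $\langle g,a\rangle_{\zeta}$, and a linear functional killing the indeterminacy. Then I would invoke Theorem \ref{main_theorem}.

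\textbf{Step 1: reduction to $\zeta$.} Since $\Delta_0$ is triangulated, Theorem \ref{main_theorem} gives $0\in\langle g,a\rangle_{\theta_0}$. Writing $\theta=(\id+\zeta)\theta_0$ with both factors natural isomorphisms, part $(5)$ of Proposition \ref{product_props} gives an equation in the quotient by $M_\theta(g,a)$. In it, the term $(\id+\zeta)\circ\langle g,a\rangle_{\theta_0}$ vanishes and $\theta_0$ acts invertibly. Hence $\langle g,a\rangle_\theta$ vanishes if and only if $\langle g,a\rangle_{\id+\zeta}$ does. By part $(2)$ and Corollary \ref{identity_product}, this holds if and only if $\langle g,a\rangle_\zeta$ vanishes.

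\textbf{Step 2: an explicit representative.} I would choose representatives $0$ for $\zeta$ at $E_2^\bullet$ and $E_3^\bullet$, and $\tilde\varepsilon$ at $E_1^\bullet$. With these choices, $D(\tilde a)=0$, so $x=0$. Since $\tilde g\tilde a=0$ strictly, we may take $s=0$. The product is then represented by $w\tilde a$, where $\partial(w)=\tilde\varepsilon\tilde g$. Here I would verify the given $6$-periodic $w$ by a degree-by-degree check using $d_3$, $t_3$, $d_1=p$ and commutativity of $\End(P_0)$. Composing with $\tilde a$ yields $m=(0\;1)$ in every degree.

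\textbf{Step 3: non-vanishing in the quotient.} Transport everything to $\underline{\Mod}\Lambda$ via $Z^0$, using $E_1^\bullet[-1]=E_1^\bullet$ in characteristic $2$. Parametrize $\Hom(X_2,X_1)$ by $(\lambda_1,\lambda_2,\lambda_3)$ and define $\varphi(f)=\lambda_1$, the vertex-$1$ component, so that $\varphi(m')=1$. It remains to show that $\varphi$ kills the following three subspaces.
\begin{enumerate}
\item Maps factoring through projectives. These factor through the injective envelope $P_0^2$, and $ya_1$ acts by zero on $X_1$ by \eqref{AB_zero}.
\item The subspace $g'\circ\Hom(X_2,X_3)$. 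Here the vertex-$1$ component of $h\colon X_2\to X_3$ has zero third row, and $g'$ is the projection onto the third coordinate.
\item The subspace $\Hom(X_3,X_1)\circ a'$. Here the vertex-$1$ component of $h\colon X_3\to X_1$ is supported on the third coordinate, and $a'$ is the inclusion of the first two coordinates.
\end{enumerate}
Since $\varphi$ is linear, $m'\notin M$, so $\langle g,a\rangle_\zeta\neq0$. By Step 1, $\langle g,a\rangle_\theta\ne0$. By Theorem \ref{main_theorem} $(1)\Rightarrow(4)$, $\Delta$ is then not triangulated.

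\textbf{Main obstacle.} I expect the main difficulty to be the linear algebra in Step 3. Pinning down the shape of $\Hom(X_2,X_3)$ and $\Hom(X_3,X_1)$ at vertex $1$ requires solving the intertwining relations $h_iA_{i}=A_{i}h_0$ and $h_0B_{i}=B_{i}h_i$ against the Jordan-type blocks $L_{2,n}$ and $L_{3,n}$. The eigenvalues $\alpha$ and $1+\alpha$ are distinct, and this is what forces the triangular shapes. Verifying $w$ in Step 2 is the secondary risk, since a sign or index slip there changes $m$.
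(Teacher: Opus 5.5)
Your proposal is correct and follows the paper's argument step for step. It has the same three stages: the reduction from $\theta$ to $\zeta$ via parts $(5)$ and $(2)$ of Proposition \ref{product_props} and Corollary \ref{identity_product}; the representative $m=w\tilde{a}$ obtained with $s=0$, $x=0$ and the explicit $6$-periodic $w$; and the functional $\varphi(f)=\lambda_1$, which kills the maps factoring through projectives and both summands of the indeterminacy.

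One small correction to your heuristic in the last paragraph: the vertex-$1$ shapes in $\Hom_{\Lambda}(X_2,X_3)$ and $\Hom_{\Lambda}(X_3,X_1)$ do not come from the distinctness of the eigenvalues $\alpha$ and $1+\alpha$. They are forced by intertwining the nilpotent part of the Jordan block $L_{2,n}$. The $b_1$-relation only identifies blocks, since $L_{1,n}=I_n$, and the $b_3$-relation is redundant, since $L_{3,n}=I_n+L_{2,n}$.
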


\begin{remark}
    All the spaces involved are small. Indeed,
    $$ \dim_{\Bbbk} \Hom_{\Lambda}(X_2,X_1) = 3, \quad \dim_{\Bbbk} \Hom_{\Lambda}(X_2,X_3) = 8, \quad \dim_{\Bbbk} \Hom_{\Lambda}(X_3,X_1) = 4, $$
    the second one being the dimension of the space of fillers in \cite{Name_Name_2026}. Moreover, the morphisms $X_2\to X_1$ factoring through a projective are exactly those with $\lambda_1=\lambda_2=0$, so that $\underline{\Hom}_{\Lambda}(X_2,X_1)$ has dimension $2$, and both summands of the indeterminacy are the line $\lambda_1=0$. The quotient above is therefore one-dimensional and the class of $m'$ is actually a generator.
\end{remark}

\begin{remark} \label{not_gemstone}
    The obstruction computed above is not gemstone-generated. Indeed, $\zeta$ is supported on $X_1$, which is not a gemstone since it is $1$-periodic by Lemma \ref{period_one}, contradicting (G1).
\end{remark}

\section{The \texorpdfstring{$n$}{n}-angulated case} \label{sec:n-angulated}

Our main result extends to the more general context of $n$-angulated categories. This is completely straightforward, so we will just state the analogues of the results of Section \ref{sec:octahedral} without proofs.

We start by recalling the definition of an $n$-angulated category following \cite{n-angulated_categories}.

\begin{definition}
    A \emph{pre-$n$-angulated category} is an additive category $\mathscr{F}$ equipped with an autoequivalence $\Sigma\colon \mathscr{F}\to \mathscr{F}$ and a class $\pentago$ of $n$-$\Sigma$-sequences, 
    $$ X^{1} \overset{\alpha^1}{\longrightarrow} X^{2} \overset{\alpha^2}{\longrightarrow} \dots \overset{\alpha^{n-1}}{\longrightarrow} X^{n} \overset{\alpha^n}{\longrightarrow} \Sigma X^1, $$
    called \emph{distinguished $n$-$\Sigma$-sequences}, such that the following axioms hold:
    \begin{itemize}
        \item[(F1)] The class $\pentago$ is closed under direct sums and direct summands. For every object $X\in \mathscr{F}$, the trivial $n$-$\Sigma$-sequence
        $$X \overset{\id}{\longrightarrow} X \to 0 \to \dots \to 0 \to \Sigma X$$
        is in $\pentago$. For every morphism $\alpha\colon X\to Y$, there is a distinguished $n$-$\Sigma$-sequence whose first morphism is $\alpha$.
        \item[(F2)] An $n$-$\Sigma$-sequence belongs to $\pentago$ if, and only if, its left rotation
        $$ X^{2} \overset{\alpha^2}{\longrightarrow} X^3 \overset{\alpha^3}{\longrightarrow} \dots \overset{\alpha^n}{\longrightarrow} \Sigma X^1 \overset{(-1)^n \Sigma \alpha^1}{\longrightarrow} \Sigma X^2 $$
        belongs to $\pentago$. 
        \item[(F3)] Every commutative square between distinguished $n$-$\Sigma$-sequences can be completed into a morphism of $n$-$\Sigma$-sequences
        $$ \xymatrix{ X^1 \ar[r]^{\alpha^1} \ar[d]_{\varphi^1} & X^2 \ar[r]^{\alpha^2} \ar[d]_{\varphi^2} & X^3 \ar[r]^{\alpha^3} \ar@{.>}[d]_{\varphi^3} & \dots \ar[r]^{\alpha^{n-1}} & X^n \ar[r]^{\alpha^n} \ar@{.>}[d]_{\varphi^n} & \Sigma X^1 \ar[d]_{\Sigma \varphi^1} \\ 
        Y^1 \ar[r]^{\beta^1} & Y^2 \ar[r]^{\beta^2} & Y^3 \ar[r]^{\beta^3} & \dots \ar[r]^{\beta^{n-1}} & Y^n \ar[r]^{\beta^n} & \Sigma Y^1 }$$
    \end{itemize}
    Furthermore, we say that a pre-$n$-angulated category is \emph{$n$-angulated} if it satisfies:
    \begin{itemize}
        \item[(F4)] For every commutative square as in (F3), the completion can be chosen so that the mapping cone sequence
        $$X^2 \oplus Y^1 \longrightarrow X^3 \oplus Y^2 \longrightarrow \dots \longrightarrow \Sigma X^1 \oplus Y^n \longrightarrow \Sigma X^2 \oplus \Sigma Y^1,$$
        whose $j$-th morphism is $\begin{psmallmatrix} -\alpha^{j+1} & 0 \\ \varphi^{j+1} & \beta^{j} \end{psmallmatrix}$ for $j<n$, the last one being $\begin{psmallmatrix} -\Sigma \alpha^{1} & 0 \\ \Sigma\varphi^{1} & \beta^{n} \end{psmallmatrix}$, is a distinguished $n$-$\Sigma$-sequence.
    \end{itemize}
\end{definition}

Notice that the axiom (F4) is the direct generalization of Neeman's cone criterion. One could also give an equivalent axiom in terms of generalized octahedra, see \cite{axioms_n-angulated}. A natural generalization of Heller's Theorem is \cite[Prop.~3.4]{n-angulated_categories}. Here we write a version of this result in terms of the category $\underline{\Ac}(\mathscr{F})$, as we did in Section \ref{sec:heller} for the triangulated case. A complex $X^{\bullet}$ over $\mathscr{F}$ is \emph{$n$-$\Sigma$-periodic} if $X^{j+n}=\Sigma X^j$ and $d^{j+n}_{X}=\Sigma d^j_{X}$ for every $j\in \mathbb{Z}$.

\begin{theorem}[{\cite[Prop.~3.4]{n-angulated_categories}}] \label{heller_n-angulated}
    Let $\mathscr{F}$ be an additive category with split idempotents admitting a pre-$n$-angulated structure and let $\Sigma\colon \mathscr{F}\to \mathscr{F}$ be an autoequivalence. Then, there is a bijection between the set of pre-$n$-angulated structures on $\mathscr{F}$ with suspension functor $\Sigma$ and the set of triangulated natural isomorphisms $(\Sigma,\id)\cong ([n],(-\id)^n)$ on $\underline{\Ac}(\mathscr{F})$. Moreover, the distinguished $n$-$\Sigma$-sequences in the pre-$n$-angulated structure associated to a natural isomorphism $\delta$ correspond to the $n$-$\Sigma$-periodic complexes $X^{\bullet}$ such that the morphism of complexes $\Sigma X^{\bullet}\to X^{\bullet}[n]$ given in degree $j$ by
    $$\tilde{\delta}_{X^{\bullet}}^j =(-1)^{n j} \id_{\Sigma X^j}$$
    is a representative for $\delta_{X^{\bullet}}$.
\end{theorem}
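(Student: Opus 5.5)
The plan is to mimic the proof of Theorem \ref{heller_version}, replacing triangles by $n$-$\Sigma$-sequences and the cube of the shift by $[n]$, and to rely on Heller's original correspondence in the form of \cite[Prop.~3.4]{n-angulated_categories}. That result is stated on $\underline{\Mod}(\mathscr{F})$ as a bijection between pre-$n$-angulations and triangulated natural isomorphisms $\Sigma\cong\Omega^{-n}$ of a suitable sign type. The first step is therefore to transport it along the triangulated equivalence $Z^0\colon\underline{\Ac}(\mathscr{F})\to\underline{\Mod}(\mathscr{F})$ of Section \ref{sec:heller}. This equivalence satisfies $Z^0(X^\bullet[1])=\Omega^{-1}Z^0(X^\bullet)$, and it intertwines $(\Sigma,\id)$ with the componentwise suspension. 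Since $([1],-\id)$ is triangulated, the $n$-fold composite $[n]$ carries the structure isomorphism $(-\id)^n$, so the target of the bijection becomes $([n],(-\id)^n)$. This already gives the first claim, provided the sign conventions for $\Omega^{-n}$ in loc.\ cit.\ match $(-\id)^n$; I would check this directly.

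Next I will describe the isomorphism $\delta=\delta(\pentago)$ concretely, as in the triangulated case. Given an acyclic $X^\bullet$, complete $d^{-1}_{X}$ to a distinguished $n$-$\Sigma$-sequence using (F1). Unrolling it gives an $n$-$\Sigma$-periodic complex $T(X^\bullet)$, with $X^{-1},X^0$ in degrees $-1,0$, and this complex is acyclic because distinguished sequences become exact under Yoneda. Then $Z^0(T(X^\bullet))=Z^0(X^\bullet)$ yields a homotopy equivalence $p\colon T(X^\bullet)\to X^\bullet$, and $Z^n(\Sigma X^\bullet[-n])=Z^n(T(X^\bullet))$ yields $q\colon\Sigma X^\bullet[-n]\to T(X^\bullet)$. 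Define $\delta_{X^\bullet}=(pq)[n]$. Independence of choices follows from (F3): any two completions are connected by a morphism of sequences that induces the identity on the relevant cocycles, and hence is a homotopy equivalence compatible with $p$ and $q$. This identifies $\delta$ with the image of the isomorphism in \cite[Prop.~3.4]{n-angulated_categories}, so the bijection follows.

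For the characterization of distinguished sequences, take $X^\bullet$ coming from a distinguished sequence and choose $T(X^\bullet)=X^\bullet$ and $p=\id$. The map $q[n]\colon\Sigma X^\bullet\to X^\bullet[n]$ is then the identity in each degree up to sign. The differential of $X^\bullet[n]$ is $(-1)^n d$, so a chain map of the form $\epsilon_j\id$ must satisfy $\epsilon_{j+1}=(-1)^n\epsilon_j$. Normalizing $\epsilon_0=1$ gives $\epsilon_j=(-1)^{nj}$, which is the stated representative. Conversely, suppose an $n$-$\Sigma$-periodic complex has this representative. Comparing it with the distinguished completion of its first map, as in Heller's argument, shows via (F3) and closure under summands/isomorphism that it is isomorphic to a distinguished sequence.

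The main obstacle is the sign bookkeeping. One issue is matching the normalization of $q$ with the sign $(-1)^n$ in the rotation axiom (F2). The other is checking that the transported isomorphism is triangulated with respect to $(-\id)^n$, and not merely natural. I would settle both by the direct degreewise computation done for $n=3$ in the diagram of the proof of Theorem \ref{heller_version}.
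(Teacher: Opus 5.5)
Your plan is correct and is essentially the paper's route. The paper gives no separate proof here; the intended argument is that of Theorem \ref{heller_version} with $3$ replaced by $n$: build $\delta_{X^{\bullet}}=(p\circ q)[n]$ from a distinguished completion of $d^{-1}_{X^{\bullet}}$, identify it with the parameter of the cited Heller-type correspondence, and read off the representative $(-1)^{nj}\id$ by taking $p=\id$ and $q$ inducing the identity on $Z^n$, which is what forces your normalization $\epsilon_0=1$.

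The one correction concerns the converse of the last claim. You cannot invoke (F3) to compare $X^{\bullet}$ with $T(X^{\bullet})$, because $X^{\bullet}$ is not yet known to be distinguished. That direction should instead come from the cited correspondence once $\delta$ is identified with its parameter, exactly as the paper does. A direct proof would be different: use naturality of $\delta$ together with the hypothesis to correct a comparison map $T(X^{\bullet})\to X^{\bullet}$ into a $\Sigma$-periodic one by a recurrence as in Proposition \ref{change_of_cone}, and then conclude by closure under direct sums and summands.
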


Here, $\delta$ being a triangulated natural isomorphism means that $\delta_{X^{\bullet}[1]}=(-1)^n\delta_{X^{\bullet}}[1]$ for every acyclic complex $X^{\bullet}$.

From now on, $\mathscr{F}$ is a pre-$n$-angulated category with split idempotents. In view of the bijection above, it has an associated natural isomorphism $\delta\colon H^*(\Sigma)\Rightarrow H^*([n])$, which is a graded natural transformation as in Section \ref{sec:octahedral}, and we can define the naturality defect product with respect to $\delta$ as in Definition \ref{delta-product}.

\begin{definition} \label{n-exact_comp}
    We say that an acyclic complex $X^{\bullet}$ is an \emph{exact $n$-gon} with respect to the natural isomorphism $\delta$ if it is $n$-$\Sigma$-periodic and the morphism of complexes $\Sigma X^{\bullet}\to X^{\bullet}[n]$ given in degree $j$ by
     $$\tilde{\delta}_{X^{\bullet}}^j =(-1)^{n j} \id_{\Sigma X^j}$$
     is a representative for $\delta_{X^{\bullet}}$.
\end{definition}

\begin{proposition} \label{n-operation_to_delta}
    Let $A^{\bullet}$ and $B^{\bullet}$ be acyclic complexes, with fixed representatives $\tilde{\delta}_{A^{\bullet}}$ and $\tilde{\delta}_{B^{\bullet}}$ of $\delta$, and take $\tilde{\delta}_{A^{\bullet}[1]}:=(-1)^n\tilde{\delta}_{A^{\bullet}}[1]$ as representative of $\delta_{A^{\bullet}[1]}$. Let $k=[\kappa]\in \underline{\Ac}(\mathscr{F})(A^{\bullet},B^{\bullet})$ and suppose that the composition
    $$A^\bullet \overset{k}{\longrightarrow} B^{\bullet} \overset{i}{\longrightarrow} C^{\bullet}(\kappa)$$
    satisfies $\langle i,k\rangle_{\delta}=0$. Then, there is some $x\in \hom^{-1}(\Sigma A^{\bullet},B^{\bullet}[n])$ satisfying $D_{\tilde{\delta}}(\kappa) = \partial(x)$ and such that the morphism of complexes $\Sigma C^{\bullet}(\kappa)\to C^{\bullet}(\kappa)[n]$ given in degree $j$ by
    \begin{equation} \label{n-forma}
        \begin{pmatrix} \tilde{\delta}_{A^{\bullet}[1]}^j & 0 \\
    x^{j+1} & \tilde{\delta}_{B^{\bullet}}^j
    \end{pmatrix}
    \end{equation}
    is a representative of $\delta_{C^{\bullet}(\kappa)}$.
\end{proposition}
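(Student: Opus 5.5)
The plan is to repeat the proof of Proposition \ref{operation_to_delta} word for word, replacing $[3]$ by $[n]$ and the sign $-\id$ by $(-\id)^n$. The degree of $\delta$ is still $t=0$, so the formula for $m_{\tilde{\delta}}$ keeps the shape $D_{\tilde{\delta}}(s)-y(\Sigma\kappa)-(\iota[n])x$. First I fix an arbitrary representative
$$\tilde{\delta}_{C^{\bullet}(\kappa)}=\begin{pmatrix}\tilde{\delta}_{11}&\tilde{\delta}_{12}\\ \tilde{\delta}_{21}&\tilde{\delta}_{22}\end{pmatrix}\colon \Sigma A^{\bullet}[1]\oplus\Sigma B^{\bullet}\to A^{\bullet}[n+1]\oplus B^{\bullet}[n].$$
I then write the condition for a degree $-1$ map $h=(h_{ij})$ to be a homotopy between it and the matrix \eqref{n-forma}, exactly as in \eqref{cone_homotopy}. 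The componentwise expansion of $\partial(h)$ has the same shape, with $\kappa^{j+3}$ replaced by $\kappa^{j+n}$. One sign needs checking here. Since $(f[n])^j=(-1)^{|f|}f^{j+n}$ for any $n$, the term coming from $\kappa[n]$ in the differential of $C^{\bullet}(\kappa)[n]$ carries the sign $(-1)^n$. So the off-diagonal entries of $\partial(h)$ pick up factors $(-1)^n$, and these must be matched against the chosen representative $\tilde{\delta}_{A^{\bullet}[1]}=(-1)^n\tilde{\delta}_{A^{\bullet}}[1]$.

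Next, I use the vanishing of $\langle i,k\rangle_{\delta}$. I take $\iota=\begin{psmallmatrix}0\\ \id\end{psmallmatrix}$ and $s=\begin{psmallmatrix}\id\\0\end{psmallmatrix}$, so that $\partial(s)=\iota\kappa$. By Proposition \ref{all_elements_come_from_choices}, I can then choose $x,y$ with $\partial(x)=D_{\tilde{\delta}}(\kappa)$ and $\partial(y)=D_{\tilde{\delta}}(\iota)$ such that $m_{\tilde{\delta}}(\kappa,\iota,s,x,y)=\partial(b)$ for some $b\in\hom^{-2}(\Sigma A^{\bullet},C^{\bullet}(\kappa)[n])$. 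Unpacking $\partial(y)=D_{\tilde{\delta}}(\iota)$ gives the analogue of \eqref{homotopy_y}:
$$\tilde{\delta}_{12}=\partial(y_1),\qquad \tilde{\delta}_{22}-\tilde{\delta}_{B^{\bullet}}=\partial(y_2)\pm(\kappa[n])y_1.$$
Unpacking $m=\partial(b)$ componentwise at degree $j+1$ gives two equations. The first expresses $\tilde{\delta}_{11}^j-\tilde{\delta}^j_{A^{\bullet}[1]}$ as $y_1^{j+1}\Sigma\kappa^{j+1}+\partial(b_1)^{j+1}$. The second expresses $\tilde{\delta}_{21}^j-x^{j+1}$ as $y_2^{j+1}\Sigma\kappa^{j+1}\mp\kappa^{j+n}b_1^{j+1}+\partial(b_2)^{j+1}$. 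In the first of these, the term $D_{\tilde{\delta}}(s)$ produces $\tilde{\delta}_{11}$ together with $-(s[n])\tilde{\delta}_{A^{\bullet}}$, and the latter is exactly $-\tilde{\delta}_{A^{\bullet}[1]}$ after reindexing, thanks to the sign convention on $\tilde{\delta}_{A^{\bullet}[1]}$.

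Finally, I define $h_{12}=y_1$, $h_{22}=y_2$, $h_{11}^j=b_1^{j+1}$ and $h_{21}^j=b_2^{j+1}$, possibly adjusting each by a sign $(-1)^n$. With this choice the four component equations above are exactly the homotopy condition. This shows that \eqref{n-forma} is a representative of $\delta_{C^{\bullet}(\kappa)}$.

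I expect the main obstacle to be the sign bookkeeping. Three sources of signs must be reconciled: the shift signs in $s[n]$ and $\iota[n]$, the sign in the cone differential after applying $[n]$, and the factor $(-1)^n$ in $\tilde{\delta}_{A^{\bullet}[1]}$. The convention $\tilde{\delta}_{A^{\bullet}[1]}=(-1)^n\tilde{\delta}_{A^{\bullet}}[1]$ in the statement is chosen precisely so that this works. The first thing to do is verify, for general $n$, the entry $(s[n])\tilde{\delta}_{A^{\bullet}}=(-1)^n\cdots$ and compare it with the $(1,1)$ entry of $\partial(h)$. If a residual sign appears, I would absorb it into the choice of signs of $h_{11}$ and $h_{21}$, and, if needed, replace $x$ by $\pm x$, noting that $\partial(\pm x)$ must still equal $D_{\tilde{\delta}}(\kappa)$.
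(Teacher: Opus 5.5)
Your proposal is correct and is precisely the intended argument: the paper omits this proof, calling it a direct transcription of the proof of Proposition \ref{operation_to_delta} with $[3]$ replaced by $[n]$. One correction: the shift convention gives $(f[n])^j=(-1)^{n|f|}f^{j+n}$, not $(-1)^{|f|}f^{j+n}$. Hence $(s[n])^j=(-1)^n s^{j+n}$, which is exactly what matches $\tilde{\delta}_{A^{\bullet}[1]}=(-1)^n\tilde{\delta}_{A^{\bullet}}[1]$ in the $(1,1)$ entry. The factor $(-1)^n$ from the differential of $C^{\bullet}(\kappa)[n]$ enters $\partial(y)$, $\partial(b)$ and $\partial(h)$ in the same way, so $h_{12}=y_1$, $h_{22}=y_2$, $h_{11}^j=b_1^{j+1}$, $h_{21}^j=b_2^{j+1}$ work with no sign adjustments.
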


\begin{proposition} \label{n-change_of_cone}
    Suppose $k\in \underline{\Ac}(\mathscr{F})(A^{\bullet},B^{\bullet})$ is a morphism between exact $n$-gons with an $n$-$\Sigma$-periodic representative $\kappa= (\kappa^{1},\dots, \kappa^{n-1}, \kappa^n) $ such that the composition
    $$A^\bullet \overset{k}{\longrightarrow} B^{\bullet} \overset{i}{\longrightarrow} C^{\bullet}(\kappa)$$
    satisfies $\langle i,k\rangle_{\delta}=0$. Then, $k$ has another $n$-$\Sigma$-periodic representative of the form $\psi = (\kappa^{1},\dots, \kappa^{n-1}, \psi^n) $ such that $C^{\bullet}(\psi)$ is an exact $n$-gon.
\end{proposition}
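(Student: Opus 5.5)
The argument will follow the proof of Proposition \ref{change_of_cone} essentially verbatim, with the period $3$ replaced by $n$ and the signs adjusted to the convention $\tilde{\delta}^j_{X^{\bullet}}=(-1)^{nj}\id_{\Sigma X^j}$ of Definition \ref{n-exact_comp}.

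First I fix representatives. Since $A^{\bullet}$ and $B^{\bullet}$ are exact $n$-gons, I take $\tilde{\delta}_{A^{\bullet}}^j=(-1)^{nj}\id$ and $\tilde{\delta}_{B^{\bullet}}^j=(-1)^{nj}\id$. Then $\tilde{\delta}_{A^{\bullet}[1]}=(-1)^n\tilde{\delta}_{A^{\bullet}}[1]$ has components $(-1)^n(-1)^{n(j+1)}\id=(-1)^{nj}\id_{\Sigma A^{j+1}}$. Because $\kappa$ is $n$-$\Sigma$-periodic, $D_{\tilde{\delta}}(\kappa)=0$. Proposition \ref{n-operation_to_delta} then gives a cocycle $x\in\hom^{-1}(\Sigma A^{\bullet},B^{\bullet}[n])$ such that the matrix \eqref{n-forma} represents $\delta_{C^{\bullet}(\kappa)}$; its diagonal is already $(-1)^{nj}\id$.

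Next I look for $\eta\in\hom^{-1}(A^{\bullet},B^{\bullet})$ and set $\psi=\kappa-\partial(\eta)$. The isomorphism $\phi^j=\begin{psmallmatrix}\id&0\\ \eta^{j+1}&\id\end{psmallmatrix}\colon C^{\bullet}(\kappa)\to C^{\bullet}(\psi)$ transports the representative of $\delta_{C^{\bullet}(\kappa)}$, by naturality, to $\phi[n]\circ(\ref{n-forma})\circ(\Sigma\phi)^{-1}$. Its lower-left entry is
\[
x^{j+1}+\eta^{j+1+n}(-1)^{nj}-(-1)^{nj}\Sigma\eta^{j+1},
\]
up to the sign that $[n]$ introduces on degree $-1$ maps. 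I will check this sign carefully, since it is the only place where the bookkeeping really differs from the triangulated case. Choosing
\[
\eta^{j+n+1}=\Sigma\eta^{j+1}\mp(-1)^{nj}x^{j+1},
\]
which I define recursively from the initial values $\eta^i=0$ for $i\in\{1,\dots,n\}$, makes this entry vanish. The transported representative is then $(-1)^{nj}\id_{\Sigma C^j(\psi)}$, so $C^{\bullet}(\psi)$ is $n$-$\Sigma$-periodic and an exact $n$-gon. The vanishing initial values guarantee that $\psi^i=\kappa^i$ for $i=1,\dots,n-1$; I still have to verify that only the $n$-th component of $\psi$ can change.

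Finally, I verify periodicity of $\partial(\eta)$ as in the last computation of Proposition \ref{change_of_cone}: substituting the recurrence gives $\partial(\eta)^{j+n}=\Sigma\partial(\eta)^j\pm\partial(x)^j$. This equals $\Sigma\partial(\eta)^j$ because $x$ is a cocycle, so $\psi$ is $n$-$\Sigma$-periodic.

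The main obstacle is consistency of signs. The recurrence sign must simultaneously kill the off-diagonal entry and make the correction term $\pm\partial(x)^j$ appear with a coherent sign; only then does the cocycle condition on $x$ yield periodicity. Both depend on the parity of $n$ through $(-1)^{nj}$ and through the sign of $[n]$ on morphisms. I would first redo the $n=3$ computation, keeping $n$ symbolic throughout, and then confirm the even case separately.
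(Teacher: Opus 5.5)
Your proposal is correct and is exactly the adaptation of the proof of Proposition \ref{change_of_cone} that the paper intends; the paper states the $n$-angulated version without proof, as a straightforward analogue. The sign you left open is settled as follows: $\phi$ has degree $0$, so $[n]$ introduces no sign and your lower-left entry is already exact. The recurrence is therefore $\eta^{j+n+1}=\Sigma\eta^{j+1}-(-1)^{nj}x^{j+1}$, and substituting it gives $\partial(\eta)^{j+n}=\Sigma\partial(\eta)^j-(-1)^{nj}\partial(x)^j$ uniformly in $n$, so no separate treatment of even $n$ is needed.
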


\begin{remark}
    Notice that, although (F4) allows us to change every component of the completion $\kappa$ other than the first two, the previous result shows that, if we already have a completion, we only need to change the last component in order to get an exact cone.
\end{remark}

\begin{theorem} \label{n-main_theorem}
    Let $\mathscr{F}$ be as above. The following are equivalent:
    \begin{enumerate}
        \item $\mathscr{F}$ is $n$-angulated.
        \item For every pair $A^{\bullet}, B^{\bullet}$ of exact $n$-gons and for every $k\in \underline{\Ac}(\mathscr{F})(A^{\bullet},B^{\bullet})$, the product $\langle i,k\rangle_{\delta}$ vanishes for some $n$-$\Sigma$-periodic representative of $k$, where $i$ denotes the class of the inclusion of $B^{\bullet}$ into the corresponding mapping cone.
        \item For every pair  $A^{\bullet}, B^{\bullet}$ of exact $n$-gons and for every $k\in \underline{\Ac}(\mathscr{F})(A^{\bullet},B^{\bullet})$, the product $\langle g,k\rangle_{\delta}$ vanishes for any morphism $g$ in $\underline{\Ac}(\mathscr{F})$ such that the naturality defect product is defined.
        \item The naturality defect product $\langle g,f\rangle_{\delta}$ vanishes for every pair of morphisms $f,g$ in $\underline{\Ac}(\mathscr{F})$ such that $gf=0$.
    \end{enumerate}
\end{theorem}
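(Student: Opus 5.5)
The proof will follow that of Theorem \ref{main_theorem} step by step, replacing Heller's theorem by Theorem \ref{heller_n-angulated}, Neeman's criterion by axiom (F4), and Propositions \ref{operation_to_delta} and \ref{change_of_cone} by Propositions \ref{n-operation_to_delta} and \ref{n-change_of_cone}. Those two propositions are proved exactly as their $3$-angulated counterparts, with $[3]$ replaced by $[n]$ and the sign $(-1)^j$ by $(-1)^{nj}$. For \ref{n-operation_to_delta}, the homotopy $h$ is built from $y_1,y_2,b_1,b_2$ as before. For \ref{n-change_of_cone}, the components $\eta^{j}$ for $j\in\{1,\dots,n\}$ vanish and $\eta$ is defined recursively by $\eta^{j+n+1}=\Sigma\eta^{j+1}-(-1)^{nj}x^{j+1}$.

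The implications (2)$\Rightarrow$(3)$\Rightarrow$(4) use only the triangulated structure of $\underline{\Ac}(\mathscr{F})$ and Proposition \ref{product_props}, so they carry over verbatim.

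\textbf{(2)$\Rightarrow$(3).} An $n$-$\Sigma$-periodic representative of $k$ exists by (F3). Given $g$ with $gk=0$, it factors as $g=hi$ through the cone. Then part $(3)$ of Proposition \ref{product_props} gives $0\in h[n]\circ\langle i,k\rangle_\delta\subseteq\langle g,k\rangle_\delta$. In particular, the vanishing of $\langle i,k\rangle_\delta$ does not depend on the chosen representative.

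\textbf{(3)$\Rightarrow$(4).} Every acyclic complex is homotopy equivalent to an exact $n$-gon. Indeed, complete $d^{-1}$ to a distinguished $n$-$\Sigma$-sequence using (F1) and compare $Z^0$. Parts $(3)$ and $(4)$ of Proposition \ref{product_props} then transport vanishing along these isomorphisms.

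\textbf{(1)$\Rightarrow$(2).} Axiom (F4) provides a periodic representative $\kappa$ whose cone is an exact $n$-gon. Taking the representatives $(-1)^{nj}\id$ on $A^\bullet$, $B^\bullet$ and $C^\bullet(\kappa)$, periodicity gives $D_{\tilde\delta}(\kappa)=0$ and $D_{\tilde\delta}(\iota)=0$, so we may take $x=y=0$. It remains to check that $D_{\tilde\delta}(s)=0$ for $s=\begin{psmallmatrix}1\\0\end{psmallmatrix}$. This is a sign check: the representative of $\delta_{A^\bullet[1]}$ is $(-1)^n\tilde\delta_{A^\bullet}[1]$, and this matches the $(-1)^{nj}$ convention in degree shifted by one.

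\textbf{(2)$\Rightarrow$(1).} Start from a commutative square between distinguished $n$-$\Sigma$-sequences and complete it by (F3) to a periodic $\kappa$. The vanishing of $\langle i,k\rangle_\delta$ for some representative implies vanishing for this one, by the independence noted above. Proposition \ref{n-change_of_cone} then yields $\psi$ agreeing with $\kappa$ in the first $n-1$ components, in particular in $\varphi^1,\varphi^2$, and with exact cone. By Theorem \ref{heller_n-angulated}, the cone sequence is distinguished, which is (F4). One must also identify the mapping cone of the periodic morphism of complexes with the cone $n$-$\Sigma$-sequence of (F4), including the signs $-\alpha^{j+1}$ and $-\Sigma\alpha^1$.

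\textbf{Main obstacle.} I expect the hardest part to be sign bookkeeping in Proposition \ref{n-change_of_cone}. First, the recurrence must make $(\tilde\delta')^j=(-1)^{nj}\id$, since the diagonal entries of $\tilde\delta_{C^\bullet(\kappa)}$ coming from $A^\bullet[1]$ carry the extra factor $(-1)^n$. Second, $\partial(\eta)$ must be $\Sigma$-periodic with period $n$. I would verify both by redoing the computation $\partial(\eta)^{j+n}=\Sigma\partial(\eta)^j+(-1)^{n(j-1)}\partial(x)^j$, using that $x$ is a cocycle because $D_{\tilde\delta}(\kappa)=0$.
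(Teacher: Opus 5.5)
Your proposal is correct and follows the route the paper intends: the paper states Theorem \ref{n-main_theorem} without proof as a direct transcription of the proof of Theorem \ref{main_theorem}, and your recurrence $\eta^{j+n+1}=\Sigma\eta^{j+1}-(-1)^{nj}x^{j+1}$ and the homotopy $h_{11}^j=b_1^{j+1}$, $h_{21}^j=b_2^{j+1}$, $h_{12}=y_1$, $h_{22}=y_2$ both go through verbatim. The only slip is the coefficient of $\partial(x)^j$ in your periodicity formula, which should be $-(-1)^{nj}$ rather than $(-1)^{n(j-1)}$ (the two differ for even $n$), but this is harmless since $\partial(x)=D_{\tilde\delta}(\kappa)=0$.
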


Examples of pre-$n$-angulated categories which are not $n$-angulated can be constructed using smaller examples. For instance, the first example for $n>3$ described in \cite{He_He_Zhou_2026}  was constructed from the example in \cite{Che26v1} for $n=3$ over $\proj \Pi(\mathbb{A}_5)$. This example was only constructed for $n=9$. Using our results, we show a way of constructing such examples.

Let $\delta\colon H^*(\Sigma)\Rightarrow H^*([n])$ and $\varepsilon\colon H^*(\Sigma')\Rightarrow H^*([m])$ be the natural isomorphisms associated to a pre-$n$-angulated structure and to a pre-$m$-angulated structure on $\mathscr{F}$. Then the horizontal composition $\varepsilon*\delta\colon H^*(\Sigma'\circ\Sigma)\Rightarrow H^*([n+m])$ is a triangulated natural isomorphism and so, by Theorem \ref{heller_n-angulated}, it defines a pre-$(n+m)$-angulated structure on $\mathscr{F}$ with suspension $\Sigma'\circ\Sigma$. Note that Theorem \ref{heller_n-angulated} applies even if we do not know a priori that $\mathscr{F}$ admits a pre-$(n+m)$-angulated structure: this hypothesis is only needed for $\Mod\mathscr{F}$ to be Frobenius, which follows from the existence of the pre-$n$-angulated structure; see \cite{n-angulated_categories}.

\begin{proposition} \label{new-pre-n-angulations}
    Let $\delta$ and $\varepsilon$ be the natural isomorphisms associated to an $n$-angulated structure and to a pre-$m$-angulated structure on $\mathscr{F}$ which is not $m$-angulated. Then the pre-$(n+m)$-angulated structure defined by $\varepsilon*\delta$ is not $(n+m)$-angulated.
\end{proposition}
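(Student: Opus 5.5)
Since the pre-$m$-angulated structure is not $m$-angulated, Theorem \ref{n-main_theorem} applied to $\varepsilon$ gives morphisms $f\colon A^{\bullet}\to B^{\bullet}$ and $g\colon B^{\bullet}\to C^{\bullet}$ in $\underline{\Ac}(\mathscr{F})$ with $gf=0$ and $\langle g,f\rangle_{\varepsilon}\neq 0$. The plan is to transport this local obstruction to one for $\varepsilon*\delta$ using Proposition \ref{horizontal_props}. We do not use the original $f,g$ directly. Instead we precompose with the functor $\Sigma$ carrying $\delta$: since $\Sigma$ is an autoequivalence of $\underline{\Ac}(\mathscr{F})$ (defined componentwise), the morphisms $f_0=\Sigma^{-1}f$ and $g_0=\Sigma^{-1}g$, between $\Sigma^{-1}A^{\bullet},\Sigma^{-1}B^{\bullet},\Sigma^{-1}C^{\bullet}$, still compose to zero. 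Here we take $F=\Sigma$, $F'=[n]$, $G=\Sigma'$, $G'=[m]$, so that the shape matches Proposition \ref{horizontal_props}.

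We apply Proposition \ref{horizontal_props} with $a=f_0[-n]$-type inputs chosen so that $F'(a)=[n](a)$ recovers the given $f$. Concretely, we put $a=f[-n]$ and $b=g[-n]$, so that $\langle F'(b),F'(a)\rangle_{\varepsilon}=\langle g,f\rangle_{\varepsilon}$, which is nonzero. The hypotheses are satisfied: $\delta$ and $\varepsilon$ are natural isomorphisms, and $H^*(G)=H^*(\Sigma')$ is full because $\Sigma'$ is an autoequivalence. Hence, in the quotient by $M_{\varepsilon*\delta}(b,a)$,
$$\langle b,a\rangle_{\varepsilon*\delta}=\varepsilon_{C}\circ\Sigma'(\langle b,a\rangle_{\delta})\pm\langle g,f\rangle_{\varepsilon}\circ\Sigma'(\delta_{A[-n]}).$$
The first summand vanishes: the $n$-angulated structure satisfies condition (4) of Theorem \ref{n-main_theorem}, so $\langle b,a\rangle_{\delta}$ contains $0$, and then $\Sigma'$ and $\varepsilon_C$ preserve this. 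It therefore remains to show that the second summand does not vanish modulo $M_{\varepsilon*\delta}(b,a)$.

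This last step is the main obstacle, and it is an indeterminacy comparison. Composition with the isomorphism $\Sigma'(\delta_{A[-n]})$ is a bijection. We must check that it carries $M_{\varepsilon}(g,f)$ onto $M_{\varepsilon*\delta}(b,a)$. This is the same argument as the "if $\delta$ is a natural isomorphism" clause in part $(5)$ of Proposition \ref{product_props}, combined with the identification $M_{\varepsilon_{F'}}(b,a)=M_{\varepsilon}(F'b,F'a)$ from the proof of Proposition \ref{horizontal_props}. One summand of the indeterminacy uses naturality of $\delta$ to move $\Sigma'\Sigma(a)$ past $\delta$; the other summand is unchanged. Once this is established, $\langle g,f\rangle_{\varepsilon}\circ\Sigma'(\delta)\neq 0$ in the quotient, so $\langle b,a\rangle_{\varepsilon*\delta}\neq 0$. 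Theorem \ref{n-main_theorem}, applied to the pre-$(n+m)$-angulated structure defined by $\varepsilon*\delta$, then shows it is not $(n+m)$-angulated. Care is also needed to track the gradings, since $a$ and $b$ have degree $0$ in $H^*$ after shifting; the sign $(-1)^{|\delta|(|a|+|b|+1)}$ is irrelevant for vanishing.
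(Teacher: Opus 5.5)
Your proposal is correct and follows the paper's own proof: set $a=f[-n]$ and $b=g[-n]$, apply Proposition \ref{horizontal_props} in the quotient by $M_{\varepsilon*\delta}(b,a)$, note that the first summand vanishes because the structure given by $\delta$ is $n$-angulated, and conclude with Theorem \ref{n-main_theorem}. The opening detour through $\Sigma^{-1}f$ is never used and can be deleted. Your explicit check that right composition with the isomorphism $\Sigma'(\delta_{A[-n]})$ carries $M_{\varepsilon}(g,f)$ onto $M_{\varepsilon*\delta}(b,a)$ is correct, and it spells out a step the paper leaves implicit when it simply asserts that the second summand is nonzero.
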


\begin{proof}
    By Theorem \ref{n-main_theorem}, there are morphisms $f,g$ in $\underline{\Ac}(\mathscr{F})$ with $gf=0$ such that $\langle g,f\rangle_{\varepsilon}$ does not vanish. Since $[n]$ is an automorphism of $\underline{\Ac}(\mathscr{F})$, we can write $f=f'[n]$ and $g=g'[n]$ for some morphisms $f',g'$ with $g'f'=0$. Since $\mathscr{F}$ is $n$-angulated with the structure given by $\delta$, the product $\langle g',f'\rangle_{\delta}$ vanishes by Theorem \ref{n-main_theorem}. Proposition \ref{horizontal_props}, applied to $f'\colon A^{\bullet}\to B^{\bullet}$ and $g'\colon B^{\bullet}\to C^{\bullet}$, gives
    $$ \varepsilon_{C^{\bullet}[n]}\circ \Sigma'\langle g',f'\rangle_{\delta} + \langle g'[n],f'[n]\rangle_{\varepsilon}\circ \Sigma'\delta_{A^{\bullet}} =\langle g',f'\rangle_{\varepsilon*\delta} $$
    in the quotient group by the indeterminacy $M_{\varepsilon*\delta}(g',f')$, since $\delta$ and $\varepsilon$ are natural isomorphisms and $\Sigma'$ is an equivalence. The first summand is zero, but the second one is not, so $\langle g',f'\rangle_{\varepsilon*\delta}$ does not vanish. The claim now follows from Theorem \ref{n-main_theorem}.
\end{proof}

We can use Proposition \ref{new-pre-n-angulations} to produce infinitely many examples out of a single one.

\begin{corollary} \label{infinitely_many}
    Suppose that $(\mathscr{F},\Sigma)$ admits a triangulated structure and a pretriangulated structure which is not triangulated. Then, for every $r\geq 1$, the pair $(\mathscr{F},\Sigma^{r})$ admits a pre-$3r$-angulated structure which is not $3r$-angulated.
\end{corollary}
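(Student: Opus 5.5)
We argue by induction on $r$, using Proposition \ref{new-pre-n-angulations} repeatedly. Let $\delta_0$ and $\varepsilon$ be the natural isomorphisms $H^*(\Sigma)\Rightarrow H^*([3])$ associated by Theorem \ref{heller_version} to the given triangulated structure and to the given pretriangulated, non-triangulated structure on $(\mathscr{F},\Sigma)$. A pretriangulated (resp.\ triangulated) structure is the same as a pre-$3$-angulated (resp.\ $3$-angulated) structure: (F1)--(F3) are Puppe's axioms and (F4) is Neeman's cone criterion, which is equivalent to (TR4). So the case $r=1$ is exactly the hypothesis, taking the structure given by $\varepsilon$.

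For the inductive step, we want a pre-$3r$-angulated but not $3r$-angulated structure with suspension $\Sigma^r$, encoded by a triangulated natural isomorphism $\varepsilon_r\colon H^*(\Sigma^r)\Rightarrow H^*([3r])$ via Theorem \ref{heller_n-angulated}. We put $\varepsilon_1=\varepsilon$ and
$$\varepsilon_{r+1}:=\varepsilon_r*\delta_0\colon H^*(\Sigma^r\circ\Sigma)\Rightarrow H^*([3r+3]).$$
As explained before Proposition \ref{new-pre-n-angulations}, the horizontal composite is a triangulated natural isomorphism. Hence Theorem \ref{heller_n-angulated} gives a pre-$(3r+3)$-angulated structure on $\mathscr{F}$ with suspension $\Sigma^{r+1}$. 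This application is legitimate because $\Mod\mathscr{F}$ is Frobenius, which already follows from the existence of the pretriangulated structure, and because $\Sigma^{r+1}$ is an autoequivalence.

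We then apply Proposition \ref{new-pre-n-angulations} with $n=3$, the $3$-angulated structure $\delta=\delta_0$, and $m=3r$ with the non-$3r$-angulated structure $\varepsilon_r$ given by the inductive hypothesis. It yields that $\varepsilon_r*\delta_0$ defines a structure that is not $(3r+3)$-angulated. This completes the induction.

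The main point needing care is the order and notation of the horizontal composition and of the suspension. Proposition \ref{new-pre-n-angulations} is stated with the $n$-angulated factor $\delta$ on the right, which is how $\varepsilon_r*\delta_0$ is set up here, and the resulting suspension is $\Sigma'\circ\Sigma=\Sigma^r\circ\Sigma=\Sigma^{r+1}$. One must also check that the identification of $3$-angulated with triangulated structures is compatible with the version of Heller's theorem used. This holds because Theorem \ref{heller_n-angulated} for $n=3$ recovers Theorem \ref{heller_version}: the signs $(-1)^{3j}=(-1)^j$ and $(-\id)^3=-\id$ agree.
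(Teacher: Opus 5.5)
Your proposal is correct and follows the paper's proof: the same recursion $\varepsilon_{r+1}=\varepsilon_r*\delta$, with the triangulated structure's isomorphism on the right, combined with induction on $r$ via Proposition~\ref{new-pre-n-angulations} for $n=3$ and $m=3r$. Your extra checks are sound details that the paper leaves implicit: that $3$-angulated means triangulated, with matching signs in Theorem~\ref{heller_n-angulated}, and that the resulting suspension is $\Sigma^{r+1}$.
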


\begin{proof}
    Let $\delta$ and $\varepsilon$ be the natural isomorphisms associated to the two given structures, and define $\varepsilon_1=\varepsilon$ and $\varepsilon_{r+1}=\varepsilon_r*\delta$. By Proposition \ref{new-pre-n-angulations} and induction on $r$, each $\varepsilon_r$ defines a pre-$3r$-angulated structure with suspension $\Sigma^{r}$ which is not $3r$-angulated.
\end{proof}

The previous corollary applies to the examples in \cite{Che26v2} and the one in \cite{Name_Name_2026}, all covered in Section \ref{sec:examples_section}. As a final result, we observe that (F4) behaves well with respect to horizontal composition.

\begin{proposition} \label{horizontal_F4}
    Let $\delta\colon H^*(\Sigma)\Rightarrow H^*([n])$ and $\varepsilon\colon H^*(\Sigma')\Rightarrow H^*([m])$ be the natural isomorphisms associated to an $n$-angulated structure and to an $m$-angulated structure on $\mathscr{F}$. Then the pre-$(n+m)$-angulated structure defined by $\varepsilon*\delta$ is $(n+m)$-angulated, that is, it satisfies {\rm (F4)}.
\end{proposition}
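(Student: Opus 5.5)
We use the characterization of Theorem \ref{n-main_theorem}, applied to the pre-$(n+m)$-angulated structure defined by $\varepsilon*\delta$. It suffices to verify condition $(4)$: for every pair of morphisms $f\colon A^{\bullet}\to B^{\bullet}$ and $g\colon B^{\bullet}\to C^{\bullet}$ in $\underline{\Ac}(\mathscr{F})$ with $gf=0$, the product $\langle g,f\rangle_{\varepsilon*\delta}$ vanishes. Since $\delta$ and $\varepsilon$ come from an $n$-angulated and an $m$-angulated structure, Theorem \ref{n-main_theorem} already tells us that $\langle g,f\rangle_{\delta}$ and $\langle g[n],f[n]\rangle_{\varepsilon}$ vanish for all such pairs. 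Note that $g[n]\circ f[n]=(gf)[n]=0$, so the second product is defined.

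The key step is Proposition \ref{horizontal_props}, applied with $F=\Sigma$, $F'=[n]$, $G=\Sigma'$ and $G'=[m]$. Since $\delta$ and $\varepsilon$ are natural isomorphisms and $H^*(\Sigma')$ is full, the proposition gives an equation in the quotient by $M_{\varepsilon*\delta}(g,f)$:
$$ \langle g,f\rangle_{\varepsilon*\delta} = \varepsilon_{C^{\bullet}[n]}\circ \Sigma'\langle g,f\rangle_{\delta} \pm \langle g[n],f[n]\rangle_{\varepsilon}\circ \Sigma'\delta_{A^{\bullet}}. $$
Fullness holds because $\Sigma'$ is an autoequivalence of $\mathscr{F}$, so the induced functor on $\underline{\Ac}(\mathscr{F})$ is an equivalence, and this persists in all degrees because $H^*$ is determined by $H^0$ and the shifts. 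The two summands vanish separately. The first is zero because $0\in\langle g,f\rangle_{\delta}$. The second is zero because $0\in\langle g[n],f[n]\rangle_{\varepsilon}$. Hence the left-hand side contains $0$, that is, $\langle g,f\rangle_{\varepsilon*\delta}$ vanishes. Theorem \ref{n-main_theorem}, direction $(4)\Rightarrow(1)$, then yields (F4).

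This argument is essentially the proof of Proposition \ref{new-pre-n-angulations} with both summands now vanishing, so it needs only one check. The main point to verify is that Theorem \ref{n-main_theorem} really applies to the structure given by $\varepsilon*\delta$. This requires three things:
\begin{enumerate}
\item $\varepsilon*\delta$ is the triangulated natural isomorphism $(\Sigma'\Sigma,\id)\cong([n+m],(-\id)^{n+m})$ attached to this structure via Theorem \ref{heller_n-angulated}, which was already observed before Proposition \ref{new-pre-n-angulations};
\item it is regarded as a degree-$0$ graded transformation $H^*(\Sigma'\Sigma)\Rightarrow H^*([n+m])$;
\item the vanishing criterion does not depend on the sign conventions in the horizontal composition.
\end{enumerate}
For (3), any sign discrepancy would only multiply a class by $\pm1$, and this does not affect vanishing.
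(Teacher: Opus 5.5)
Your proposal is correct and matches the paper's proof. Theorem~\ref{n-main_theorem} gives that $\langle g,f\rangle_{\delta}$ and $\langle g[n],f[n]\rangle_{\varepsilon}$ both contain zero, so both summands in Proposition~\ref{horizontal_props} contain zero, hence so does $\langle g,f\rangle_{\varepsilon*\delta}$, and the direction $(4)\Rightarrow(1)$ yields (F4). The one difference is that the paper uses only the inclusion part of Proposition~\ref{horizontal_props}, which needs neither fullness of $H^*(\Sigma')$ nor invertibility of $\varepsilon$, so your verification of those hypotheses is correct but unnecessary.
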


\begin{proof}
    Let $A^{\bullet} \overset{f}{\to} B^{\bullet} \overset{g}{\to} C^{\bullet}$ be morphisms in $\underline{\Ac}(\mathscr{F})$ with $gf=0$. By Theorem \ref{n-main_theorem}, both $\langle g,f\rangle_{\delta}$ and $\langle g[n],f[n]\rangle_{\varepsilon}$ vanish, so both summands in Proposition \ref{horizontal_props} contain zero, and hence so does $\langle g,f\rangle_{\varepsilon*\delta}$. The result follows from Theorem \ref{n-main_theorem}.
\end{proof}

\newcommand{\etalchar}[1]{$^{#1}$}
\providecommand{\bysame}{\leavevmode\hbox to3em{\hrulefill}\thinspace}
\providecommand{\MR}{\relax\ifhmode\unskip\space\fi MR }
\providecommand{\MRhref}[2]{%
  \href{http://www.ams.org/mathscinet-getitem?mr=#1}{#2}
}
\providecommand{\href}[2]{#2}

\end{document}